\begin{document}
\bibliographystyle{spmpsci}
%\title[The law of large numbers for $\boxtimes$]{The law of large numbers for the free multiplicative convolution\\ (preliminary draft)}
%\author{Uffe Haagerup}
%\address{Department of Mathematical Sciences \\ University of Copenhagen \\ Universitetsparken 5 \\ 2100 Copenhagen Ø \\ Denmark}
%\email{haagerup@math.ku.dk}
%\author{Sören Möller}
%\address{Department of Mathematics and Computer Science \\ University of Southern Denmark \\ Campusvej 55 \\ 5230 Odense M \\ Denmark}
%\email{moeller@imada.sdu.dk}
%\subjclass[2000]{Primary 46L54}
%\date{\today}

\title*{The law of large numbers for the free multiplicative convolution}
\author{Uffe Haagerup and Sören Möller}
\institute{Uffe Haagerup \at Department of Mathematical Sciences, University of Copenhagen, Universitetsparken 5, 2100 Copenhagen Ø, Denmark, \email{haagerup@math.ku.dk}
\and Sören Möller \at Department of Mathematics and Computer Science, University of Southern Denmark, Campusvej 55, 5230 Odense M, Denmark, \email{moeller@imada.sdu.dk}
\and {The first author is supported by ERC Advanced Grant No. OAFPG 27731 and the Danish National Research Foundation through the Center for Symmetry and Deformation.}}
\maketitle

\abstract{In classical probability the law of large numbers for the multiplicative convolution follows directly from the law for the additive convolution. In free probability this is not the case. The free additive law was proved by D. Voiculescu in 1986 for probability measures with bounded support and extended to all probability measures with first moment by J. M. Lindsay and V. Pata in 1997, while the free multiplicative law was proved only recently by G. Tucci in 2010. In this paper we extend Tucci's result to measures with unbounded support while at the same time giving a more elementary proof for the case of bounded support. In contrast to the classical multiplicative convolution case, the limit measure for the free multiplicative law of large numbers is not a Dirac measure, unless the original measure is a Dirac measure. We also show that the mean value of $\ln x$ is additive with respect to the free multiplicative convolution while the variance of $\ln x$ is not in general additive. Furthermore we study the two parameter family $(\mu_{\alpha,\beta})_{\alpha,\beta\ge0}$ of measures on $(0,\infty)$ for which the $S$-transform is given by $S_{\mu_{\alpha,\beta}}(z) = (-z)^\beta (1+z)^{-\alpha}$, $0 < z < 1$.}

%\maketitle

%\tableofcontents

%\newpage

\section{Introduction}
In classical probability the weak law of large numbers is well known (see for instance \cite[Corollary 5.4.11]{Rosenthal2006}), both for additive and multiplicative convolution of Borel measures on $\mathbb{R}$, respectively, $[0,\infty)$.

Going from classical probability to free probability, one could ask if similar results exist for the additive and multiplicative free convolutions $\boxplus$ and $\boxtimes$ as defined by D. Voiculescu in \cite{Voiculescu1986} and \cite{Voiculescu1987} and extended to unbounded probability measures by H. Bercovici and D. Voiculescu in \cite{BercoviciVoiculescu1993}. The law of large numbers for the free additive convolution of measures with bounded support is an immediate consequence of D. Voiculescu's work in \cite{Voiculescu1986} and J. M. Lindsay and V. Pata proved it for measures with first moment in 
\cite[Corollary 5.2]{LindsayPata1997}.
\index{law of large numbers, free additive}
\begin{theorem}[{\cite[Corollary 5.2]{LindsayPata1997}}]
Let $\mu$ be a probability measure on $\mathbb{R}$ with existing mean value $\alpha$, and let $\psi_n \colon \mathbb{R} \to \mathbb{R}$ be the map $\psi_n(x) = \frac{1}{n} x$. Then
\begin{align}
\dot{\psi}_n(\underbrace{\mu \boxplus \dots \boxplus \mu}_\text{n times}) & \to \delta_\alpha
\end{align}
where convergence is weak and $\delta_x$ denotes the Dirac measure at $x \in \mathbb{R}$.
\end{theorem}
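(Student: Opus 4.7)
The natural approach uses Voiculescu's $R$-transform, which linearizes free additive convolution. Recall that $R_{\mu\boxplus\nu} = R_\mu + R_\nu$ (so by induction $R_{\mu^{\boxplus n}} = nR_\mu$), and that a direct computation from $G_\nu(z) = \int (z-x)^{-1}\, d\nu(x)$ yields $G_{\psi_c(\nu)}(z) = c^{-1}G_\nu(z/c)$. Inverting this and subtracting $1/z$ gives $R_{\psi_c(\nu)}(z) = c R_\nu(cz)$. Combining the two identities with $c = 1/n$,
\[
R_{\psi_n(\mu^{\boxplus n})}(z) \;=\; \tfrac{1}{n}\cdot n R_\mu(z/n) \;=\; R_\mu(z/n).
\]

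Since $\mu$ has mean $\alpha$, the $R$-transform satisfies $R_\mu(0) = \alpha$ (viewed as the coefficient of $z^0$ in the power-series/asymptotic expansion, or as a nontangential limit in the unbounded case). Hence the right-hand side converges to the constant $\alpha$, which is precisely $R_{\delta_\alpha}$. Weak convergence $\psi_n(\mu^{\boxplus n}) \to \delta_\alpha$ then follows from the Bercovici--Voiculescu continuity theorem linking convergence of $R$-transforms on a common truncated cone in $\mathbb{C}^+$ to weak convergence of the underlying probability measures, together with the fact that the limit $R$-transform belongs to a probability measure (so tightness is automatic).

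The main obstacle is the unbounded-support case. When $\mu$ is not compactly supported, $R_\mu$ is only defined on a truncated cone rather than a disk around $0$, so both the identity $R_\mu(0) = \alpha$ and the continuity argument require justification in the Bercovici--Voiculescu setting. A clean workaround is to approximate $\mu$ by its truncation $\mu_R$ to $[-R,R]$, for which the bounded-support argument (and the $R$-transform defined on a neighbourhood of $0$) applies directly, and then let $R \to \infty$; the assumption that the first moment exists is exactly what is needed to ensure that the error between $\psi_n(\mu^{\boxplus n})$ and $\psi_n(\mu_R^{\boxplus n})$ is uniformly controlled in $n$, so that the limits can be interchanged.
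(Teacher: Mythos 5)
The paper offers no proof of this theorem for you to be compared against: it is quoted as \cite[Corollary 5.2]{LindsayPata1997}, with only the remark that the compactly supported case is immediate from Voiculescu's work. Judged on its own, the core of your argument is correct and is the standard one for bounded support: $R_{\mu^{\boxplus n}}=nR_\mu$ together with the dilation identity $R_{\dot{\psi}_c(\nu)}(z)=cR_\nu(cz)$ gives $R_{\dot{\psi}_n(\mu^{\boxplus n})}(z)=R_\mu(z/n)$, and for compactly supported $\mu$ the $R$-transform is analytic near $0$ with $R_\mu(0)=\alpha$, so a continuity theorem finishes the proof.

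The genuine gap is in the unbounded case, exactly at the step you declare to be a routine interchange of limits. In free probability there is no pointwise coupling of $\dot{\psi}_n(\mu^{\boxplus n})$ with $\dot{\psi}_n(\mu_R^{\boxplus n})$ on a common classical probability space, so the classical mechanism behind ``the error is uniformly controlled in $n$'' is simply absent. The tool available purely at the level of measures is the subadditivity of the L\'evy metric under $\boxplus$ (proved in \cite{BercoviciVoiculescu1993}), which yields only $d_L\bigl(\mu^{\boxplus n},\mu_R^{\boxplus n}\bigr)\le n\,d_L(\mu,\mu_R)$; the dilation $\dot{\psi}_n$ contracts the spatial but not the mass component of the L\'evy metric, so the resulting bound diverges in $n$ and gives nothing. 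A truncation proof can be rescued, but only by operator-algebraic means: realize free copies $X_1,X_2,\dots$ affiliated with a finite von Neumann algebra $(\mathcal{M},\tau)$, truncate spectrally $X_i^R=X_i\,1_{[-R,R]}(X_i)$, and bound the $L^1$-Wasserstein distance between the spectral distributions of $\frac{1}{n}\sum_i X_i$ and $\frac{1}{n}\sum_i X_i^R$ by $\tau\bigl(\bigl|\frac{1}{n}\sum_{i=1}^n(X_i-X_i^R)\bigr|\bigr)\le\int_{|x|>R}|x|\,d\mu(x)$, which rests on a Lidskii-type rearrangement inequality in finite von Neumann algebras; none of this machinery is hinted at in your sketch. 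A second, smaller, inaccuracy: in the Bercovici--Voiculescu continuity theorem for unbounded measures, tightness is not ``automatic'' from the limit function being the transform of a probability measure; the theorem carries the extra hypothesis that $\phi_{\nu_n}(iy)=o(y)$ uniformly in $n$ as $y\to\infty$, which must be verified.

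The frustrating part is that the machinery you set up already proves the general case with no truncation at all, and this is in substance the Lindsay--Pata route. The Voiculescu transform $\phi_\mu(z)=R_\mu(1/z)$ is defined on a truncated cone and is additive under $\boxplus$ there for arbitrary probability measures \cite{BercoviciVoiculescu1993}, and your identity becomes $\phi_{\dot{\psi}_n(\mu^{\boxplus n})}(z)=\phi_\mu(nz)$. The one analytic fact needed is that a finite first moment $\alpha$ forces $\phi_\mu(z)\to\alpha$ nontangentially as $|z|\to\infty$ (via the Nevanlinna representation of $1/G_\mu$ and dominated convergence); this is the real content of the theorem. Granting it, $\phi_{\dot{\psi}_n(\mu^{\boxplus n})}\to\alpha=\phi_{\delta_\alpha}$ uniformly on compact subsets of a fixed cone, the uniform $o(y)$ condition holds because $\phi_\mu$ is bounded at infinity in the cone, and the continuity theorem gives $\dot{\psi}_n(\mu^{\boxplus n})\to\delta_\alpha$ weakly. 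Proving that nontangential limit and invoking the continuity theorem precisely would close your gap; the truncation detour, as written, does not.
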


Here $\dot{\phi}(\mu)$ denotes the image measure of $\mu$ under $\phi$ for a Borel measurable function $\phi \colon \mathbb{R} \to \mathbb{R}$, respectively, $[0,\infty) \to [0,\infty)$.

In classical probability the multiplicative law follows directly from the additive law. This is not the case in free probability, here a multiplicative law requires a separate proof. This has been proved by G. H. Tucci in \cite[Theorem 3.2]{Tucci2010} for measures with bounded support using results on operator algebras from \cite{HaagerupLarsen2000} and \cite{HaagerupSchultz2009}. In this paper we give an elementary proof of Tucci's theorem which also shows that the theorem holds for measures with unbounded support.
\index{law of large numbers, free multiplicative}
\begin{theorem}
\label{WLLN_main_theorem}
Let $\mu$ be a probability measure on $[0,\infty)$ and let $\phi_n \colon [0,\infty) \to [0,\infty)$ be the map $\phi_n(x) = x^{\frac{1}{n}}$. Set  $\delta = \mu(\{0\})$. If we denote
\begin{align}
\nu_n = \dot{\phi}_n(\mu_n) = \dot{\phi}_n(\underbrace{\mu \boxtimes \dots \boxtimes \mu}_\text{n times})
\end{align}
then $\nu_n$ converges weakly to a probability measure $\nu$ on $[0,\infty)$. If $\mu$ is a Dirac measure on $[0,\infty)$ then $\nu=\mu$. Otherwise $\nu$ is the unique measure on $[0,\infty)$ characterised by $\nu\left(\left[0,\frac{1}{S_\mu(t-1)}\right]\right) = t$ for all $t \in (\delta,1)$ and $\nu(\{0\})=\delta$. The support of the measure $\nu$ is the closure of the interval
\begin{align}
(a,b) & = \left( \left( \int_0^\infty x^{-1} \D\mu(x) \right)^{-1} , \int_0^\infty x \D\mu(x) \right),
\end{align}
where $0 \le a < b \le \infty$.
\end{theorem}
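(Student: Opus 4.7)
The natural tool is Voiculescu's $S$-transform, which turns free multiplicative convolution into ordinary multiplication: $S_{\mu^{\boxtimes n}}(z) = S_\mu(z)^n$. For a probability measure on $[0,\infty)$ with $\delta = \mu(\{0\}) < 1$, recall that $\psi_\mu(w) = \int_0^\infty \frac{wx}{1-wx}\,d\mu(x)$ admits an inverse $\chi_\mu$ on $(\delta-1,0)$, and $S_\mu(z) = \tfrac{1+z}{z}\chi_\mu(z)$ is positive and decreasing there, with boundary values $S_\mu(0^-) = \bigl(\int_0^\infty x\,d\mu\bigr)^{-1}$ and $S_\mu((\delta-1)^+) = \int_0^\infty x^{-1}\,d\mu \in (0,\infty]$. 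I would begin by verifying these standard facts in the generality of possibly unbounded measures, using monotone/dominated convergence applied to the identity $\psi_\mu(-1/t) = -\int \frac{x}{t+x}\,d\mu(x)$.

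The core computation is to express the tail of $\nu_n$ through the change of variable $x = y^n$: since $\nu_n = \dot{\phi}_n(\mu^{\boxtimes n})$,
\begin{equation*}
-\psi_{\mu^{\boxtimes n}}\bigl(-t^{-n}\bigr) = \int_0^\infty \frac{x}{t^n+x}\,d\mu^{\boxtimes n}(x) = \int_0^\infty \frac{y^n}{t^n+y^n}\,d\nu_n(y),
\end{equation*}
and the rightmost integrand tends pointwise to $\mathbf{1}_{(t,\infty)}(y)$ as $n\to\infty$. Inverting the equation $\psi_{\mu^{\boxtimes n}}(-t^{-n}) = u$ gives $-t^{-n} = \tfrac{u}{1+u}S_\mu(u)^n$, so
\begin{equation*}
t_n(u) = \Bigl(\tfrac{-u}{1+u}\Bigr)^{-1/n} S_\mu(u)^{-1} \;\longrightarrow\; \frac{1}{S_\mu(u)}
\end{equation*}
for each fixed $u \in (\delta-1,0)$. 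Setting $t = 1+u \in (\delta,1)$, this already singles out the candidate limit: $\nu([0, 1/S_\mu(t-1)]) = t$.

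The remaining task is to upgrade this to weak convergence of $\nu_n$ to $\nu$. I would fix a continuity point $s \in (a,b)$ of the candidate $\nu$, pick the unique $u \in (\delta-1,0)$ with $1/S_\mu(u) = s$, and combine the convergence $t_n(u) \to s$ with monotonicity of $F_{\nu_n}$ and a squeeze argument on $\int y^n/(t^n+y^n)\,d\nu_n(y)$ (the integrand is bounded by $1$, so dominated convergence applies once the measures are seen to be weakly precompact via the same monotonicity) to conclude $F_{\nu_n}(s) \to 1+u = t$. Density of such $s$ in $(a,b)$ then yields weak convergence on $(0,\infty)$. The atom at zero is controlled by the identity $\mu^{\boxtimes n}(\{0\}) = \delta$ for all $n$, which follows from $\lim_{w\to-\infty}\psi_{\mu^{\boxtimes n}}(w) = \delta-1$ combined with the boundary asymptotics of $\chi_\mu$ at $(\delta-1)^+$.

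Finally, the support endpoints $a,b$ are exactly the values $1/S_\mu((\delta-1)^+)$ and $1/S_\mu(0^-)$ computed in the first step, and the Dirac case $\mu = \delta_c$ is verified directly ($S_{\delta_c} \equiv 1/c$ and $\nu_n = \delta_c$ for every $n$). The main obstacle is the unbounded-support regime that Tucci's operator-algebraic proof sidestepped: one must ensure that $S_\mu$ is well defined and monotone on the entire interval $(\delta-1,0)$ for arbitrary $\mu$ with possibly infinite first or inverse moments, and that the inversion-and-squeeze step above still produces the correct limit when $\nu_n$ is allowed to be heavy-tailed at infinity. This is precisely where the argument must be more careful than in the bounded case.
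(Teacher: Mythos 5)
Your outline retraces the paper's own proof rather closely: your tail identity $\int_0^\infty \frac{y^n}{t^n+y^n}\,d\nu_n(y) = -\psi_{\mu^{\boxtimes n}}(-t^{-n})$, inverted through $\chi_{\mu^{\boxtimes n}}(u)=\frac{u}{1+u}S_\mu(u)^n$, is the complementary form of the paper's Lemma \ref{WLLN_lem_nun_equality}; your squeeze is Lemma \ref{WLLN_lem_limit_nun} with the perturbation placed in the spatial variable rather than in the parameter $t$; your boundary values of $S_\mu$ are Lemma \ref{WLLN_S_image}; and your treatment of the atom at $0$ matches Remark \ref{WLLN_remark_atom_zero}. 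So the architecture is sound and is essentially the paper's.

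There is, however, one genuine gap: throughout, you use that $S_\mu$ is \emph{strictly} decreasing on $(\delta-1,0)$ when $\mu$ is not a Dirac measure, and this is neither among the ``standard facts'' you propose to verify nor available in Bercovici--Voiculescu (their Proposition 6.8, the paper's Lemma \ref{WLLN_lem_prop_Stransform}, gives decreasing, not strictly decreasing). Strictness is needed (i) to speak of ``the unique $u\in(\delta-1,0)$ with $1/S_\mu(u)=s$''; (ii) to close the squeeze, since the argument compares $S$-values at nearby parameters raised to the $n$-th power, and such ratios tend to $0$ or $\infty$ only if the two values are genuinely distinct; and (iii) even to make the limit characterization $\nu\left(\left[0,1/S_\mu(u)\right]\right)=1+u$ self-consistent, since if $S_\mu$ were constant on a subinterval the same set would be assigned two different masses. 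This is exactly the paper's Lemma \ref{WLLN_lemma_monotonicity_S}, which requires a separate argument: after symmetrization, the logarithmic derivative of $S_\mu\circ\psi_\mu$ reduces to a multiple of $\int_0^\infty\int_0^\infty (s-t)^2(1-us)^{-2}(1-ut)^{-2}\,d\mu(s)\,d\mu(t)$, which is nonzero precisely when $\mu$ is not a Dirac measure. Your sketch must include this lemma or the squeeze can stall. A smaller slip: the parenthetical claim that ``dominated convergence applies once the measures are weakly precompact'' is not correct as stated, because the integrand and the measure vary with $n$ simultaneously; what actually works is the uniform estimate implicit in your own squeeze (for large $n$ the integrand is uniformly small on $[0,s(1-\varepsilon)]$ and uniformly close to $1$ on $[s(1+\varepsilon),\infty)$), which is how the paper's Lemma \ref{WLLN_lem_limit_nun} proceeds without any compactness argument.
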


Note that unlike the additive case, the multiplicative limit distribution is only a Dirac measure if $\mu$ is a Dirac measure. Furthermore $S_\mu$ and hence (by \cite[Theorem 2.6]{Voiculescu1987}) $\mu$ can be reconstructed from the limit measure.

We start by recalling some definitions and proving some preliminary results in Section \ref{WLLN_section_preliminaries}, which then in Section \ref{WLLN_section_main_result} are used to prove Theorem \ref{WLLN_main_theorem}. In Section \ref{WLLN_section_further_formulas} we prove some further formulas in connection with the limit law, which we in Section \ref{WLLN_section_examples} apply to the two parameter family $(\mu_{\alpha,\beta})_{\alpha,\beta \ge 0}$ of measures on $(0,\infty)$ for which the $S$-transform is given by $S_{\mu_{\alpha,\beta}}(z) = \frac{(-z)^\beta}{(1+z)^\alpha}$, $0 < z < 1$.

\section{Preliminaries}
\label{WLLN_section_preliminaries}

We start with recalling some results we will use and proving some technical tools necessary for the proof of Theorem \ref{WLLN_main_theorem}. 
At first we recall the definition and some properties of Voiculescu's $S$-transform for measures on $[0,\infty)$ with unbounded support as defined by H. Bercovici and D. Voiculescu in \cite{BercoviciVoiculescu1993}.

\index{S-transform}
\begin{definition}[{\cite[Section 6]{BercoviciVoiculescu1993}}]
\label{WLLN_def_S_transform}
Let $\mu$ be a probability measure on $[0,\infty)$ and assume that $\delta = \mu(\{0\}) < 1$. We define $\psi_\mu(u) = \int_0^\infty \frac{tu}{1-tu} \D\mu(t)$ and denote its inverse in a neighbourhood of $(\delta-1,0)$ by $\chi_\mu$. Now we define the \emph{$S$-transform} of $\mu$ by $S_\mu(z) = \frac{z+1}{z} \chi_\mu(z)$ for $z \in (\delta-1,0)$.
\end{definition}

\begin{lemma}[{\cite[Proposition 6.8]{BercoviciVoiculescu1993}}]
\label{WLLN_lem_prop_Stransform}
Let $\mu$ be a probability measure on $[0,\infty)$ with $\delta = \mu(\{0\})<1$ then $S_\mu$ is decreasing on $(\delta-1,0)$ and positive. Moreover, if $\delta > 0$ we have $S_\mu(z) \to \infty$ if $z \to \delta - 1$.
\end{lemma}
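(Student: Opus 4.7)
The plan is to reduce all three claims to an analysis of $\psi_\mu$ on $(-\infty,0)$. For $u<0$ the integrand $\frac{tu}{1-tu}$ lies in $(-1,0]$ for every $t\in[0,\infty)$, so $\psi_\mu(u)$ is well defined; differentiation under the integral sign gives $\psi_\mu'(u)=\int_0^\infty t(1-tu)^{-2}\,d\mu(t)$, which is strictly positive unless $\mu=\delta_0$. Dominated convergence yields $\psi_\mu(u)\to 0$ as $u\to 0^-$ and $\psi_\mu(u)\to\delta-1$ as $u\to-\infty$. Hence $\psi_\mu$ is a strictly increasing bijection from $(-\infty,0)$ onto $(\delta-1,0)$, and its inverse $\chi_\mu$ is a strictly increasing map from $(\delta-1,0)$ onto $(-\infty,0)$, globally defined on the stated domain.

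Positivity of $S_\mu$ then follows by sign analysis: for $z\in(\delta-1,0)$ we have $z<0$ and $z+1>\delta\geq 0$, so $(z+1)/z<0$, and combined with $\chi_\mu(z)<0$ this gives $S_\mu(z)>0$. The blow-up at the left endpoint when $\delta>0$ is equally quick: as $z\to(\delta-1)^+$, $\chi_\mu(z)\to-\infty$ while $(z+1)/z\to\delta/(\delta-1)$, a finite negative constant, so $S_\mu(z)\to+\infty$.

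The only substantive point is strict decrease of $S_\mu$. Since $\chi_\mu$ is strictly increasing in $z$, it is equivalent to show that
\begin{align}
\widetilde S(y) := S_\mu(\psi_\mu(y)) = y\cdot\frac{\psi_\mu(y)+1}{\psi_\mu(y)}
\end{align}
is strictly decreasing on $(-\infty,0)$. A direct computation gives
\begin{align}
\widetilde S'(y) = \frac{\psi_\mu(y)\bigl(\psi_\mu(y)+1\bigr)-y\psi_\mu'(y)}{\psi_\mu(y)^2},
\end{align}
so the task is to show the numerator is negative. Taking independent copies $X_1,X_2$ of a random variable $X$ with law $\mu$, we have $\psi_\mu(y)=\mathbb{E}[X_1y/(1-X_1y)]$, $\psi_\mu(y)+1=\mathbb{E}[1/(1-X_2y)]$, and $y\psi_\mu'(y)=\mathbb{E}[X_1y/(1-X_1y)^2]$. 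Symmetrizing over the swap $X_1\leftrightarrow X_2$ and writing $A=1-X_1y$, $B=1-X_2y$, the numerator equals
\begin{align}
\frac{1}{2}\mathbb{E}\left[\frac{2-A-B}{AB}-\frac{1-A}{A^2}-\frac{1-B}{B^2}\right],
\end{align}
and a short algebraic simplification collapses the bracket to $-(A-B)^2/(A^2B^2) = -y^2(X_1-X_2)^2/\bigl((1-X_1y)^2(1-X_2y)^2\bigr)$, which is pointwise $\leq 0$ and strictly negative whenever $\mu$ is not a Dirac mass.

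The main obstacle is precisely this algebraic cancellation: there is no a priori reason $\psi_\mu(\psi_\mu+1)-y\psi_\mu'$ should have a definite sign, and the trick of passing to two independent copies is what exposes the expression as a negative square. Once this is in hand, the remaining assertions (existence and monotonicity of $\chi_\mu$, positivity of $S_\mu$, and the limit at $\delta-1$) are immediate from unwrapping the definitions.
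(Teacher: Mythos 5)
Your proof is correct, and its one substantive step---showing the numerator $\psi_\mu(\psi_\mu+1)-y\psi_\mu'$ is negative by symmetrizing over two independent copies so that it collapses to $-\tfrac{1}{2}\mathbb{E}\bigl[y^2(X_1-X_2)^2/\bigl((1-X_1y)^2(1-X_2y)^2\bigr)\bigr]$---is exactly the computation the paper performs in double-integral notation, via the identity $(s+t)(1-us)(1-ut)-s(1-ut)^2-t(1-us)^2=-u(s-t)^2$, when it proves the strict-monotonicity refinement in Lemma \ref{WLLN_lemma_monotonicity_S} (the paper does not reprove the present lemma itself, citing Bercovici--Voiculescu instead). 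The positivity and the blow-up at $\delta-1$, which the paper leaves to that citation, you dispatch by the same routine sign analysis of $\tfrac{z+1}{z}\chi_\mu(z)$, so nothing is missing.
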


\begin{lemma}
\label{WLLN_lemma_monotonicity_S}
Let $\mu$ be a probability measure on $[0,\infty)$ with $\delta = \mu(\{0\}) < 1$. Assume that $\mu$ is not a Dirac measure, then $S_\mu'(z) < 0$ for $z \in (\delta-1,0)$. In particular $S_\mu$ is strictly decreasing on $(\delta-1,0)$.
\end{lemma}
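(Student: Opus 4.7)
The plan is to differentiate $S_\mu$ explicitly and, after rearrangement, to recognise the numerator as the negative of a covariance with respect to $\mu$ that will be strictly negative by Chebyshev's sum inequality whenever $\mu$ is not a point mass.

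I would fix $z \in (\delta-1, 0)$, set $u = \chi_\mu(z)$ (which lies in $(-\infty, 0)$ since $\psi_\mu(0) = 0$ and $\psi_\mu$ is strictly increasing), and abbreviate $f(t) = \frac{1}{1-tu}$. Differentiation under the integral sign gives $\psi_\mu'(u) = \int_0^\infty t f(t)^2 \,\mathrm{d}\mu(t) > 0$, so the inverse function theorem yields $\chi_\mu'(z) = 1/\psi_\mu'(u)$. Together with the identities $\int tf\,\mathrm{d}\mu = \psi_\mu(u)/u = z/u$ and $\int f\,\mathrm{d}\mu = \psi_\mu(u)+1 = z+1$, differentiating $S_\mu(z) = \frac{z+1}{z}\chi_\mu(z)$ will give
\begin{align}
S_\mu'(z) = -\frac{u}{z^2} + \frac{z+1}{z\,\psi_\mu'(u)} = \frac{z(z+1) - u\,\psi_\mu'(u)}{z^2\,\psi_\mu'(u)} = \frac{-u\,\mathrm{Cov}_\mu(tf,f)}{z^2\,\psi_\mu'(u)},
\end{align}
where $\mathrm{Cov}_\mu(g,h) := \int gh\,\mathrm{d}\mu - \int g\,\mathrm{d}\mu\,\int h\,\mathrm{d}\mu$.

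The denominator is positive and $-u > 0$, so it will suffice to show that $\mathrm{Cov}_\mu(tf, f) < 0$. On $[0,\infty)$ the map $t \mapsto tf(t) = t/(1-tu)$ has derivative $1/(1-tu)^2 > 0$ and is strictly increasing, while $t \mapsto f(t)$ has derivative $u/(1-tu)^2 < 0$ (since $u < 0$) and is strictly decreasing. Writing the covariance in symmetric form,
\begin{align}
\mathrm{Cov}_\mu(tf,f) = \tfrac{1}{2}\iint_{[0,\infty)^2}\bigl(sf(s)-tf(t)\bigr)\bigl(f(s)-f(t)\bigr)\,\mathrm{d}\mu(s)\,\mathrm{d}\mu(t),
\end{align}
the integrand is $\le 0$ everywhere and strictly negative off the diagonal. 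Since $\mu$ is not a Dirac measure, $(\mu\otimes\mu)(\{(s,t): s\neq t\}) = 1 - \sum_x \mu(\{x\})^2 > 0$, so $\mathrm{Cov}_\mu(tf, f) < 0$ strictly, and hence $S_\mu'(z) < 0$.

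The only nontrivial step will be the algebraic rearrangement that exposes the covariance; once that form is in place, the Chebyshev/FKG-type comparison is essentially automatic, and the ``non-Dirac'' hypothesis enters at precisely the spot needed to promote Chebyshev's inequality to a strict one.
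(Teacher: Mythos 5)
Your proposal is correct and takes essentially the same route as the paper: both reduce the claim to the sign of $\psi_\mu(u)(\psi_\mu(u)+1) - u\psi_\mu'(u)$ (your $z(z+1)-u\psi_\mu'(u)$) and prove strict negativity via the same symmetric double integral over $\mu\times\mu$, with the non-Dirac hypothesis supplying positive off-diagonal mass. Your Chebyshev-covariance packaging is just a reformulation of the paper's explicit symmetrization, whose algebraic identity $(s+t)(1-us)(1-ut)-s(1-ut)^2-t(1-us)^2=-u(s-t)^2$ produces precisely the kernel $\bigl(sf(s)-tf(t)\bigr)\bigl(f(s)-f(t)\bigr)$ that you integrate.
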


\begin{proof}
For $u \in (-\infty,0)$,
\begin{align}
\label{WLLN_eq_monotonicity_psi}
\psi_\mu'(u) & = \int_0^\infty \frac{t}{(1-ut)^2} \D\mu(t) > 0.
\end{align}
Moreover $\lim_{u \to 0-} \psi_\mu(u) = 0$ and $\lim_{u \to -\infty} \psi_\mu(u) = \delta - 1$.
Hence $\psi_\mu$ is a strictly increasing homeomorphism of $(-\infty,0)$ onto $(\delta-1,0)$. For $u \in (-\infty,0)$, we have
\begin{align}
S_\mu(\psi_\mu(u)) = \frac{\psi_\mu(u)+1}{\psi_\mu(u)} \cdot u.
\end{align}

Hence
\begin{align}
\label{WLLN_eq_logS_diff}
\frac{\D}{\D u}\left( \ln S_\mu(\psi_\mu(u))\right) = - \frac{\psi_\mu'(u)}{\psi_\mu(u)(\psi_\mu(u)+1)} + \frac{1}{u} = \frac{\psi_\mu(u)(\psi_\mu(u)+1) - u \psi_\mu'(u)}{u \psi_\mu(u)(\psi_\mu(u)+1)} \quad
\end{align}
where the denominator is positive and the nominator is equal to
\begin{align}
& \left(\int_0^\infty \frac{ut}{1-ut} \D\mu(t)\right) \cdot \left(\int_0^\infty \frac{1}{1-ut} \D\mu(t)\right) - \int_0^\infty \frac{ut}{(1-ut)^2} \D\mu(t) \\
& = \frac{u}{2} \int_0^\infty \int_0^\infty \frac{s+t}{(1-us)(1-ut)} \D\mu(s)\D\mu(t) \\
& \qquad - \frac{u}{2} \int_0^\infty \int_0^\infty \left(\frac{s}{(1-us)^2} + \frac{t}{(1-ut)^2}\right) \D\mu(s)\D\mu(t) \\
& = -\frac{u^2}{2} \int_0^\infty \int_0^\infty \frac{(s-t)^2}{(1-us)^2(1-ut)^2} \D\mu(s)\D\mu(t)
\end{align}
where we have used that
\begin{align}
(s+t)(1-us)(1-ut) - s(1-ut)^2 - t(1-us)^2 = -u(s-t)^2.
\end{align}
Since $\mu$ is not a Dirac measure,
\begin{align}
(\mu \times \mu)\left(\left\{(s,t) \in [0,\infty)^2 : s \neq t \right\} \right) > 0
\end{align}
and thus
\begin{align}
\int_0^\infty \int_0^\infty \frac{(s-t)^2}{(1-us)^2(1-ut)^2} \D\mu(s)\D\mu(t) > 0
\end{align}
which shows that the right hand side of \eqref{WLLN_eq_logS_diff} is strictly positive. Hence
\begin{align}
\frac{\D}{\D z}\left( \ln S_\mu(z)\right) < 0
\end{align} 
for $z \in (\delta - 1,0)$, which proves the lemma.
\qed
\end{proof}

\begin{remark}
Furthermore, by {\cite[Proposition 6.1]{BercoviciVoiculescu1993}} and {\cite[Proposition 6.3]{BercoviciVoiculescu1993}} $\psi_\mu$ and $\chi_\mu$ are analytic in a neighbourhood of $(-\infty,0)$, respectively, $(-1,0)$, hence $S_\mu$ is analytic in a neighbourhood of $(\delta-1,0)$.
\end{remark}

\begin{lemma}[{\cite[Corollary 6.6]{BercoviciVoiculescu1993}}]
\label{WLLN_lem_Smunu_eq_Smu_Snu}
Let $\mu$ and $\nu$ be probability measures on $[0,\infty)$, none of them beeing $\delta_0$, then we have $S_{\mu \boxtimes \nu} = S_\mu S_\nu$.
\end{lemma}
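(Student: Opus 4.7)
The plan is to reduce the identity to the compactly supported case via truncation. For compactly supported $\mu$ and $\nu$, I would realize them as spectral distributions of free positive bounded self-adjoint operators $a, b$ in a tracial $W^*$-probability space $(M, \tau)$, so that $\mu \boxtimes \nu$ is by definition the spectral distribution of $a^{1/2} b a^{1/2}$. By traciality, $\tau((a^{1/2}ba^{1/2})^n) = \tau((ab)^n)$, so the generating function $\psi_{\mu \boxtimes \nu}(u)$ coincides as a formal power series with $\tau(uab(1-uab)^{-1})$. The identity $S_{\mu\boxtimes\nu} = S_\mu S_\nu$ then reduces to Voiculescu's combinatorial computation from \cite{Voiculescu1987}, which rests on a moment-cumulant rearrangement using only the freeness of $a$ and $b$.

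For the general case, let $\mu, \nu \neq \delta_0$ be probability measures on $[0,\infty)$ and truncate: set $\mu_R = (\phi_R)_* \mu$ with $\phi_R(x) = \min(x,R)$, and analogously $\nu_R$. Since $\phi_R(0) = 0$, the truncations have the same mass at $0$ as $\mu$ and $\nu$, so the domains of $S_{\mu_R}$ and $S_{\nu_R}$ agree with those of $S_\mu$ and $S_\nu$. As $R \to \infty$ one has $\mu_R \to \mu$ and $\nu_R \to \nu$ weakly, and by continuity of $\boxtimes$ on probability measures on $[0,\infty)$ (see \cite{BercoviciVoiculescu1993}), also $\mu_R \boxtimes \nu_R \to \mu \boxtimes \nu$ weakly. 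For each fixed $u < 0$ the integrand $t \mapsto tu/(1-tu)$ is bounded and continuous on $[0,\infty)$ with a finite limit at $+\infty$, so $\psi_{\mu_R}(u) \to \psi_\mu(u)$ pointwise on $(-\infty, 0)$. Combined with the strict monotonicity from Lemma \ref{WLLN_lemma_monotonicity_S}, this upgrades to pointwise convergence $\chi_{\mu_R} \to \chi_\mu$ on $(\mu(\{0\})-1, 0)$, hence $S_{\mu_R} \to S_\mu$ there; and analogously for $\nu$ and $\mu \boxtimes \nu$. Passing to the limit in the compactly supported identity $S_{\mu_R \boxtimes \nu_R} = S_{\mu_R} S_{\nu_R}$ yields the general statement.

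The main obstacle is the passage from pointwise convergence of the $\psi$-transforms to pointwise convergence of their compositional inverses, since uniform convergence on compact subintervals is not immediate. I would avoid this by invoking the elementary fact that if $f_n, f \colon (-\infty, 0) \to (c, 0)$ are strictly increasing homeomorphisms with $f_n \to f$ pointwise and sharing the same endpoint behaviour at $-\infty$, then $f_n^{-1} \to f^{-1}$ pointwise on $(c, 0)$. Here the shared lower endpoint is $\mu(\{0\}) - 1$, guaranteed by the choice $\phi_R(0) = 0$; and for $\mu \boxtimes \nu$ one needs the identity $(\mu \boxtimes \nu)(\{0\}) = \max(\mu(\{0\}), \nu(\{0\}))$ from Bercovici--Voiculescu to align the domains in the limit. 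Once these endpoint identifications are in place, the monotonicity from Lemma \ref{WLLN_lem_prop_Stransform} carries the convergence through the inversion step.
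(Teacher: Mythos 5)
The paper does not prove this lemma at all: it is quoted verbatim from Bercovici--Voiculescu \cite{BercoviciVoiculescu1993} (their Corollary 6.6), so your attempt can only be compared with the standard proof given there. Your outline --- Voiculescu's theorem \cite{Voiculescu1987} for the compactly supported case via traciality, $\tau\bigl((a^{1/2}ba^{1/2})^n\bigr)=\tau\bigl((ab)^n\bigr)$, then truncation by $\phi_R(x)=\min(x,R)$ chosen precisely so that the atom at $0$, and hence the domain of the $S$-transform, is preserved, then a monotone-inversion argument to upgrade pointwise convergence of $\psi$ to convergence of $\chi$ and $S$ --- is essentially that proof, and your inversion step is correct: for $y$ in the common range and $x=f^{-1}(y)$, the relations $f_n(x-\epsilon)\to f(x-\epsilon)<y<f(x+\epsilon)\leftarrow f_n(x+\epsilon)$ trap $f_n^{-1}(y)$ in $(x-\epsilon,x+\epsilon)$. (A small imprecision: what this step needs is strict monotonicity of $\psi_\mu$, which is elementary from $\psi_\mu'>0$, not Lemma \ref{WLLN_lemma_monotonicity_S}, which concerns $S_\mu$ and assumes $\mu$ is not a Dirac measure.)

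The genuine gap is the sentence invoking ``continuity of $\boxtimes$ on probability measures on $[0,\infty)$'' to get $\mu_R\boxtimes\nu_R\to\mu\boxtimes\nu$ weakly. For measures with unbounded support this is not an innocent black box: the continuity of $\boxtimes$ along weakly convergent sequences is, in the literature, obtained from the $S$-transform machinery for unbounded measures --- i.e.\ from the very statement you are proving --- so citing it here is circular, and it is exactly the point where the unboundedness must be confronted. The repair is to go back to the definition of $\boxtimes$ for unbounded measures: $\mu\boxtimes\nu$ is the distribution of $a^{1/2}ba^{1/2}$ for free positive operators $a,b$ affiliated with a tracial $W^*$-probability space $(M,\tau)$. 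Then $\min(a,R)$ and $\min(b,R)$ are free, bounded, have distributions $\mu_R,\nu_R$, and converge to $a,b$ in the measure topology (the projections where they differ have traces $\mu((R,\infty))$ and $\nu((R,\infty))$, which tend to $0$); since multiplication and continuous functional calculus are continuous in the measure topology, $\min(a,R)^{1/2}\min(b,R)\min(a,R)^{1/2}\to a^{1/2}ba^{1/2}$ in measure, and convergence in measure of self-adjoint operators implies weak convergence of their spectral distributions. This makes the truncation step self-contained and non-circular. Two further remarks: the atom identity $(\mu\boxtimes\nu)(\{0\})=\max(\mu(\{0\}),\nu(\{0\}))$ is Belinschi's theorem, not in \cite{BercoviciVoiculescu1993}; but you do not actually need it, since the portmanteau inequality $\limsup_{R}(\mu_R\boxtimes\nu_R)(\{0\})\le(\mu\boxtimes\nu)(\{0\})$ already ensures that every $y$ in the limiting domain lies in the domain of $\chi_{\mu_R\boxtimes\nu_R}$ for all large $R$, which is all your inversion argument requires.
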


Next we have to determine the image of $S_\mu$. Here we closely follow the argument given for measures with compact support by F. Larsen and the first author in \cite[Theorem 4.4]{HaagerupLarsen2000}.

\begin{lemma}
\label{WLLN_S_image}
Let $\mu$ be a probability measure on $[0,\infty)$ not being a Dirac measure, then $S_\mu((\delta-1,0)) = (b^{-1},a^{-1})$,
where $a$, $b$ and $\delta$ are defined as in Theorem \ref{WLLN_main_theorem}.
\end{lemma}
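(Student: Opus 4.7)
The plan is to reduce the claim to computing the two boundary limits of $S_\mu$. By Lemma \ref{WLLN_lemma_monotonicity_S} together with the analyticity of $S_\mu$ noted in the preceding Remark, $S_\mu$ is continuous and strictly decreasing on $(\delta-1,0)$, so its image is an open interval whose endpoints are $\lim_{z \to 0-} S_\mu(z)$ and $\lim_{z \to (\delta-1)+} S_\mu(z)$. I would compute both via the substitution $z = \psi_\mu(u)$ with $u \in (-\infty,0)$, which bijects $(-\infty,0)$ onto $(\delta-1,0)$, and use the defining identity
\begin{align}
S_\mu(\psi_\mu(u)) = u \cdot \frac{\psi_\mu(u)+1}{\psi_\mu(u)}.
\end{align}

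For the limit at $z \to 0-$ (equivalently $u \to 0-$), I would write
\begin{align}
\frac{\psi_\mu(u)}{u} = \int_0^\infty \frac{t}{1-tu}\,\D\mu(t),
\end{align}
and observe that for $u<0$ the integrand increases monotonically to $t$ as $u \uparrow 0$, so monotone convergence gives $\psi_\mu(u)/u \to b$ regardless of whether $b$ is finite or infinite. Combined with $\psi_\mu(u)+1 \to 1$ this yields $\lim_{z \to 0-} S_\mu(z) = 1/b$.

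For the limit at $z \to (\delta-1)+$ (equivalently $u \to -\infty$), I would write
\begin{align}
u\bigl(\psi_\mu(u)+1\bigr) = \int_0^\infty \frac{u}{1-tu}\,\D\mu(t),
\end{align}
and note that the integrand $u/(1-tu) = -|u|/(1+t|u|)$ decreases pointwise to $-1/t$ (interpreted as $-\infty$ at $t=0$) as $|u| \to \infty$; monotone convergence then gives $u(\psi_\mu(u)+1) \to -\int_0^\infty t^{-1}\,\D\mu(t) = -1/a$ (with the convention $1/0 = \infty$). Since $\psi_\mu(u) \to \delta-1 < 0$ as $u \to -\infty$, the quotient tends to $(1/a)/(1-\delta)$, which equals $1/a$ when $\delta=0$ and equals $\infty = 1/a$ when $\delta > 0$ (because $\delta > 0$ forces $\int x^{-1}\,\D\mu(x) = \infty$, hence $a=0$). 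Thus $\lim_{z \to (\delta-1)+} S_\mu(z) = 1/a$ in all cases.

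The only delicate point is to handle the several boundary regimes ($b$ finite or infinite; $\delta = 0$ with $\int x^{-1}\,\D\mu < \infty$ or $= \infty$; $\delta > 0$) without splitting into cases. The monotonicity in $u$ of each integrand — increasing in $u$ at the endpoint $u \to 0-$, decreasing in $u$ at the endpoint $u \to -\infty$ — makes a single application of monotone convergence at each end sufficient, so no separate dominating function is needed and all cases follow uniformly.
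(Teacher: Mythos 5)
Your proof is correct and follows essentially the same route as the paper's: parametrize $(\delta-1,0)$ by $\psi_\mu$, use the identity $S_\mu(\psi_\mu(u)) = u\,(\psi_\mu(u)+1)/\psi_\mu(u)$ to compute the two boundary limits ($1/b$ at one end, $1/a$ at the other), and conclude from continuity and strict monotonicity that the image is the interval between them. The only difference is minor: the paper splits into the cases $\delta=0$ and $\delta>0$, invoking Lemma \ref{WLLN_lem_prop_Stransform} for the divergence at $z \to \delta-1$ in the latter case, whereas your monotone-convergence argument treats both endpoints and all regimes ($b$ finite or infinite, $a$ zero or positive) uniformly.
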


\begin{proof}
First assume $\delta=0$. Observe that for $u \to \infty$ we have
\begin{align}
\int_0^\infty\frac{u}{1+ut} \D\mu(t)  \to \int_0^\infty\frac{1}{t} \D\mu(t) = a^{-1} \quad \text{ and } \quad
\int_0^\infty\frac{ut}{1+ut} \D\mu(t) & \to 1.
\end{align}
Hence
\begin{align}
\frac{-\psi_\mu(-u)}{u (\psi_\mu(-u)+1)} &= \left(\int_0^\infty\frac{ut}{1+ut} \D\mu(t)\right)\left(\int_0^\infty\frac{u}{1+ut} \D\mu(t)\right)^{-1} \to a \quad \text{ for } u \to \infty.
\end{align}
Similarly for $u \to 0$ we have
\begin{align}
\int_0^\infty\frac{t}{1+ut} \D\mu(t) & \to \int_0^\infty t \D\mu(t) = b \quad \text{ and } \quad
\int_0^\infty\frac{1}{1+ut} \D\mu(t) \to 1.
\end{align}
Hence
\begin{align}
\frac{-\psi_\mu(-u)}{u (\psi_\mu(-u)+1)} &= \frac{\int_0^\infty\frac{t}{1+ut} \D\mu(t)}{\int_0^\infty\frac{1}{1+ut} \D\mu(t)} \to b \quad \text{ for } u \to 0.
\end{align}

As $\chi_\mu$ is the inverse of $\psi_\mu$ we have
\begin{align}
\label{WLLN_eq_S_g}
S_\mu(\psi_\mu(-u)) &= \frac{\psi_\mu(-u)+1}{\psi_\mu(-u)} \chi_\mu(\psi_\mu(-u)) = \frac{u (\psi_\mu(-u)+1)}{-\psi_\mu(-u)}.
\end{align}

By \eqref{WLLN_eq_monotonicity_psi} and Lemma \ref{WLLN_lemma_monotonicity_S} $\psi_\mu$ is strictly increasing and continuous and $S_\mu$ is strictly decreasing and continuous so $S_\mu(\psi_\mu((-\infty,0))) = S_\mu((-1,0)) = (b^{-1},a^{-1})$.

If now $\delta>0$ we have by Lemma \ref{WLLN_lem_prop_Stransform} that $S_\mu(z) \to \infty$ for $z \to \delta-1$, so in this case continuity gives us $S_\mu((\delta-1,0)) = (b^{-1},\infty)$, which is as desired as $a=0$ in this case.
\qed
\end{proof}

\section{Proof of the main result}
\label{WLLN_section_main_result}
Let $\mu$ be a probability measure on $[0,\infty)$ and let $\nu$ be as defined in Theorem \ref{WLLN_main_theorem}. If $\mu$ is a Dirac measure, then $\nu_n=\mu$ for all $n$ and hence $\nu_n \to \nu=\mu$ weakly, so the theorem holds in this case. In the following we can therefore assume that $\mu$ is not a Dirac measure. We start by assuming further that $\mu(\{0\})=0$, and will deal with the case $\mu(\{0\})>0$ in Remark \ref{WLLN_remark_atom_zero}.

\begin{lemma}
\label{WLLN_lem_nun_equality}
For all $t \in (0,1)$ and all $n \ge 1$ we have
\begin{align}
\int_0^\infty \left(1+\frac{1-t}{t} S_{\mu}(t-1)^n x^n \right)^{-1} \D\nu_n(x) & = t.
\end{align}
\end{lemma}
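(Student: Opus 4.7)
The plan is to rewrite the integral as a value of $1+\psi_{\mu_n}$ at a cleverly chosen point, and then to use multiplicativity of the $S$-transform to evaluate it.

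First I would use that $\nu_n$ is the push-forward of $\mu_n$ under $\phi_n(x)=x^{1/n}$ to substitute $y=x^n$, obtaining
\begin{align}
\int_0^\infty \left(1+\tfrac{1-t}{t}S_\mu(t-1)^n x^n\right)^{-1}\D\nu_n(x)
= \int_0^\infty \left(1-u_n y\right)^{-1}\D\mu_n(y),
\end{align}
where $u_n := \tfrac{t-1}{t}\,S_\mu(t-1)^n$. Since $\tfrac{1}{1-u_n y}=1+\tfrac{u_n y}{1-u_n y}$ and $\mu_n$ is a probability measure, the right-hand side equals $1+\psi_{\mu_n}(u_n)$, so it suffices to show $\psi_{\mu_n}(u_n)=t-1$, i.e. $\chi_{\mu_n}(t-1)=u_n$.

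Next I would observe that by the assumption $\mu(\{0\})=0$ and the multiplicativity of the $S$-transform (Lemma \ref{WLLN_lem_Smunu_eq_Smu_Snu}), the identity $S_{\mu_n}=S_\mu^n$ holds on $(-1,0)$; in particular $\mu_n(\{0\})=0$ for every $n$, so $t-1$ lies in the domain of $\chi_{\mu_n}$ for every $t\in(0,1)$. By the definition of $S_{\mu_n}$ we have $\chi_{\mu_n}(t-1)=\tfrac{t-1}{t}S_{\mu_n}(t-1)=\tfrac{t-1}{t}S_\mu(t-1)^n=u_n$, which is exactly what was needed.

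The main obstacle is a cosmetic one: choosing $u_n$ correctly and verifying that $u_n<0$ (so that it lies in the domain of $\psi_{\mu_n}$). This is immediate from $t\in(0,1)$ together with the positivity of $S_\mu$ on $(-1,0)$ given by Lemma \ref{WLLN_lem_prop_Stransform}. Once this is in place, the argument is essentially just unwinding the definitions of $\psi_{\mu_n}$, $\chi_{\mu_n}$ and $S_{\mu_n}$.
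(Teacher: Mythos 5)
Your proposal is correct and follows essentially the same route as the paper: both arguments amount to unwinding the definitions of $\psi_{\mu_n}$, $\chi_{\mu_n}$ and $S_{\mu_n}$, invoking multiplicativity of the $S$-transform (Lemma \ref{WLLN_lem_Smunu_eq_Smu_Snu}) to write $S_{\mu_n} = S_\mu^n$, and transferring between $\mu_n$ and $\nu_n$ via the substitution $y = x^n$. The only difference is the direction of the computation --- the paper starts from $\psi_{\mu_n}(\chi_{\mu_n}(t-1)) = t-1$ and expands it into the integral, whereas you start from the integral and reduce it to that identity --- which is purely cosmetic.
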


\begin{proof}
Let $t \in (0,1)$ and set $z = t-1$. By Definition \ref{WLLN_def_S_transform} we have
\begin{align}
z + 1 & = \psi_{\mu_n}(\chi_{\mu_n}(z)) + 1 \\
& = \int_0^\infty \frac{\chi_{\mu_n}(z)x}{1-\chi_{\mu_n}(z)x} \D\mu_n(x) + 1 \\
& = \int_0^\infty \frac{1}{1-\chi_{\mu_n}(z)x} \D\mu_n(x) \\
& = \int_0^\infty \left(1-\frac{z}{z+1} S_{\mu_n}(z) x\right)^{-1} \D\mu_n(x) \\
& = \int_0^\infty \left(1-\frac{z}{z+1} S_{\mu}(z)^n x\right)^{-1} \D\mu_n(x).
\end{align}
In the last equality we use multiplicativity of the $S$-transform from Lemma \ref{WLLN_lem_Smunu_eq_Smu_Snu}.

Now substitute $t=z+1$ and afterwards $y^n = x$ and use the definition of $\nu_n$ to get
\begin{align}
t & = \int_0^\infty \left(1+\frac{1-t}{t} S_\mu(t-1)^n x \right)^{-1} \D\mu_n(x) \\
& = \int_0^\infty \left(1+\frac{1-t}{t} S_\mu(t-1)^n y^n \right)^{-1} \D\nu_n(y).
\end{align}
\qed
\end{proof}

Now, using this lemma, we can prove the following characterisation of the weak limit of $\nu_n$.

\begin{lemma}
\label{WLLN_lem_limit_nun}
For all $t \in (0,1)$ we have $t = \lim_{n\to\infty}\nu_n\left(\left[0,\frac{1}{S_\mu(t-1)}\right]\right)$.
\end{lemma}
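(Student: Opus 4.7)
The plan is to start from the identity
\begin{align}
t = \int_0^\infty \Bigl(1 + \tfrac{1-t}{t}\, S_\mu(t-1)^n x^n\Bigr)^{-1} \D\nu_n(x)
\end{align}
provided by Lemma \ref{WLLN_lem_nun_equality}, to write $s = S_\mu(t-1)$ (positive by Lemma \ref{WLLN_lem_prop_Stransform}), and to argue that as $n \to \infty$ the integrand concentrates near the indicator of $[0, 1/s]$. A pointwise convergence argument is not enough, since $\nu_n$ itself varies with $n$, so I would instead split the half-line into three regions on which the integrand is uniformly close to $1$, uniformly close to $0$, or merely trivially bounded.

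Concretely, fix $\epsilon \in (0, 1)$. On $[0, (1-\epsilon)/s]$ the integrand is bounded below by $(1 + \tfrac{1-t}{t}(1-\epsilon)^n)^{-1}$, which tends to $1$, while on $[(1+\epsilon)/s, \infty)$ it is bounded above by $(1 + \tfrac{1-t}{t}(1+\epsilon)^n)^{-1}$, which tends to $0$; on the remaining interval I would use only $0 \le (\cdot) \le 1$. Splitting the integral above according to these three regions and using that its value is $t$ would then yield
\begin{align}
\limsup_{n \to \infty} \nu_n\bigl([0, \tfrac{1-\epsilon}{s}]\bigr) \le t, \qquad \liminf_{n \to \infty} \nu_n\bigl([0, \tfrac{1+\epsilon}{s}]\bigr) \ge t
\end{align}
for every $\epsilon \in (0, 1)$.

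To pass from these neighbourhood estimates to $\lim_n \nu_n([0, 1/s]) = t$, I would exploit the continuity and strict monotonicity of $t \mapsto S_\mu(t-1)$ supplied by Lemma \ref{WLLN_lemma_monotonicity_S} and the analyticity remark. Given $t' \in (0, t)$ one has $S_\mu(t'-1) > S_\mu(t-1)$, so $\epsilon > 0$ may be chosen with $(1+\epsilon)/S_\mu(t'-1) \le 1/S_\mu(t-1)$; applying the $\liminf$ estimate at $t'$ with this $\epsilon$ gives $\liminf_n \nu_n([0, 1/s]) \ge t'$, and letting $t' \nearrow t$ produces $\liminf \ge t$. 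The dual argument with $t' \in (t, 1)$ provides $\limsup \le t$. The main obstacle is precisely this last step: a naive $\epsilon \to 0$ limit in the previous display does not directly control $\nu_n([0, 1/s])$, because the inclusions $[0, (1-\epsilon)/s] \subset [0, 1/s] \subset [0, (1+\epsilon)/s]$ point the wrong way for each of the two bounds. Varying the parameter $t$ instead of $\epsilon$, and using that $S_\mu$ is strictly decreasing on $(-1, 0)$, is what overcomes this mismatch.
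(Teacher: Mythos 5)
Your proposal is correct and is essentially the paper's own proof: both start from Lemma \ref{WLLN_lem_nun_equality}, split the integral, exploit the geometric growth/decay of the $n$-th powers in the integrand, and use strict monotonicity of $S_\mu$ (Lemma \ref{WLLN_lemma_monotonicity_S}) to squeeze $\nu_n\left(\left[0,\frac{1}{S_\mu(t-1)}\right]\right)$ between values obtained from auxiliary parameters $t' \nearrow t$ and $t'' \searrow t$. The only cosmetic difference is that you split at the points $(1\pm\epsilon)/S_\mu(t'-1)$ and then transfer the bounds by interval inclusion, whereas the paper splits directly at $1/S_\mu(t-1)$, which corresponds to the extremal choice $1+\epsilon = S_\mu(t'-1)/S_\mu(t-1)$ in your argument.
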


\begin{proof}
Fix $t \in (0,1)$ and let $t' \in (0,t)$. Then
\begin{align}
\label{WLLN_eqn_inequality1}
t' &  = \int_0^\infty \left(1+\frac{1-t'}{t'} S_\mu(t'-1)^n x^n \right)^{-1} \D\nu_n(x) \\
& \le \int_0^\infty \left(1+\frac{1-t}{t} S_\mu(t'-1)^n x^n \right)^{-1} \D\nu_n(x) \\
& \le \int_0^\frac{1}{S_\mu(t-1)} 1 \D\nu_n(x) 
+ \int_\frac{1}{S_\mu(t-1)}^\infty \left(1+\frac{1-t}{t} S_\mu(t'-1)^n x^n \right)^{-1} \D\nu_n(x) \\
& \le \int_0^\frac{1}{S_\mu(t-1)} 1 \D\nu_n(x) 
+ \int_\frac{1}{S_\mu(t-1)}^\infty \left(1+\frac{1-t}{t} \left(\frac{S_\mu(t'-1)}{S_\mu(t-1)}\right)^n \right)^{-1} \D\nu_n(x) \\
& \le \nu_n\left(\left[0,\frac{1}{S_\mu(t-1)}\right]\right) + \left(1+\frac{1-t}{t} \left(\frac{S_\mu(t'-1)}{S_\mu(t-1)}\right)^n \right)^{-1}.
\end{align}
Here the first inequality holds as $t' \le t$ while $S_\mu(t'-1)^nx^n>0$, the second holds as $1+\frac{1-t}{t} S_\mu(t'-1)^n x^n \ge 0$, and the last because $\nu_n$ is a probability measure. 

By Lemma \ref{WLLN_lemma_monotonicity_S}, $S_\mu(t-1)$ is strictly decreasing, and hence $\frac{S_\mu(t'-1)}{S_\mu(t-1)} > 1$. This implies 
\begin{align}
\lim_{n \to \infty} \left(1+\frac{1-t}{t} \left(\frac{S_\mu(t'-1)}{S_\mu(t-1)}\right)^n \right)^{-1} = 0.
\end{align}
And hence
\begin{align}
t' & \le \liminf_{n \to \infty} \nu_n\left(\left[0,\frac{1}{S_\mu(t-1)}\right]\right).
\end{align}
As this holds for all $t' \in (0,t)$ we have
\begin{align}
\label{WLLN_eqn_mainInequality1}
t & \le \liminf_{n \to \infty} \nu_n\left(\left[0,\frac{1}{S_\mu(t-1)}\right]\right).
\end{align}

On the other hand if $t'' \in (t,1)$ we get
\begin{align}
t'' &  = \int_0^\infty \left(1+\frac{1-t''}{t''} S_\mu(t''-1)^nx^n \right)^{-1} \D\nu_n(x) \\
&  \ge \int_0^\infty \left(1+\frac{1-t}{t} S_\mu(t''-1)^nx^n \right)^{-1} \D\nu_n(x) \\
& \ge \int_0^\frac{1}{S(t-1)} \left(1+\frac{1-t}{t} S_\mu(t''-1)^n x^n \right)^{-1} \D\nu_n(x) \\
& \ge \int_0^\frac{1}{S(t-1)} \left(1+\frac{1-t}{t} \frac{S_\mu(t''-1)^n}{S_\mu(t-1)^n} \right)^{-1} \D\nu_n(x) \\
& \ge \nu_n\left(\left[0,\frac{1}{S_\mu(t-1)}\right]\right) \cdot \left(1+\frac{1-t}{t} \left(\frac{S_\mu(t''-1)}{S_\mu(t-1)}\right)^n \right)^{-1}.
\end{align}
Here the first inequality holds as $t''>t$ while $S_\mu(t''-1)x^n \ge 0$, and the second to last inequality holds as $S_\mu(t-1)$ is decreasing.

Again as $S_\mu(t-1)$ is strictly decreasing we have $\frac{S_\mu(t''-1)}{S_\mu(t-1)} < 1$, hence
\begin{align}
\lim_{n \to \infty} \left(1+\frac{1-t}{t} \left(\frac{S_\mu(t''-1)}{S_\mu(t-1)}\right)^n \right)^{-1} & = 1.
\end{align}

This implies
\begin{align}
t'' & \ge \limsup_{n \to \infty} \nu_n\left(\left[0,\frac{1}{S_\mu(t-1)}\right]\right).
\end{align}
As this holds for all $t'' \in (t,1)$ we have
\begin{align}
\label{WLLN_eqn_mainInequality2}
t & \ge \limsup_{n \to \infty} \nu_n\left(\left[0,\frac{1}{S_\mu(t-1)}\right]\right).
\end{align}

Combining \eqref{WLLN_eqn_mainInequality1} and \eqref{WLLN_eqn_mainInequality2} we get
\begin{align}
t & = \lim_{n \to \infty} \nu_n\left(\left[0,\frac{1}{S_\mu(t-1)}\right]\right)
\end{align}
as desired.
\qed
\end{proof}

For proving weak convergence of $\nu_n$ to $\nu$ it remains to show that $\nu_n$ vanishes in limit outside of the support of $\nu$.

\begin{lemma}
\label{WLLN_lem_vn_v_outside_support}
For all $x \le a$ and $y \ge b$ we have $\nu_n([0,x]) \to 0$, respectively, $\nu_n([0,y]) \to 1$.
\end{lemma}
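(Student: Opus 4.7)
The plan is to deduce this directly from Lemma \ref{WLLN_lem_limit_nun} together with the image description of $S_\mu$ from Lemma \ref{WLLN_S_image}. Since we are in the case $\delta=0$, Lemma \ref{WLLN_S_image} says $S_\mu((-1,0))=(b^{-1},a^{-1})$, and Lemma \ref{WLLN_lemma_monotonicity_S} says $S_\mu$ is strictly decreasing. Therefore the map $t\mapsto 1/S_\mu(t-1)$ is a strictly increasing homeomorphism of $(0,1)$ onto $(a,b)$. In particular, for every $t\in(0,1)$ we have $a<1/S_\mu(t-1)<b$.

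For the first assertion, fix $x\le a$ and an arbitrary $t\in(0,1)$. Since $x\le a<1/S_\mu(t-1)$, monotonicity of $\nu_n$ gives
\begin{align}
\nu_n([0,x]) \;\le\; \nu_n\!\left(\left[0,\tfrac{1}{S_\mu(t-1)}\right]\right),
\end{align}
so by Lemma \ref{WLLN_lem_limit_nun}, $\limsup_{n\to\infty}\nu_n([0,x])\le t$. Letting $t\to 0^+$ yields $\nu_n([0,x])\to 0$. For the second assertion, fix $y\ge b$ and an arbitrary $t\in(0,1)$. Since $y\ge b>1/S_\mu(t-1)$, we get
\begin{align}
\nu_n([0,y]) \;\ge\; \nu_n\!\left(\left[0,\tfrac{1}{S_\mu(t-1)}\right]\right),
\end{align}
and Lemma \ref{WLLN_lem_limit_nun} gives $\liminf_{n\to\infty}\nu_n([0,y])\ge t$. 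Letting $t\to 1^-$ yields $\nu_n([0,y])\to 1$.

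There is no real obstacle here; the whole content is the identification of the image of $t\mapsto 1/S_\mu(t-1)$ with the interval $(a,b)$, which is already packaged in Lemma \ref{WLLN_S_image}. The only minor point worth flagging in the write-up is that the two endpoint cases $x=a$ and $y=b$ are handled by the same inequalities, because $a<1/S_\mu(t-1)<b$ holds strictly for every $t\in(0,1)$.
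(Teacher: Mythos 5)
Your proof is correct and is essentially the same as the paper's: both deduce $x \le a < 1/S_\mu(t-1) < b \le y$ from Lemma \ref{WLLN_S_image}, apply monotonicity of the measure $\nu_n$ together with Lemma \ref{WLLN_lem_limit_nun}, and then let the parameter $t$ tend to $0^+$ (respectively $1^-$). The only cosmetic difference is that you package the image statement as a homeomorphism $t \mapsto 1/S_\mu(t-1)$ of $(0,1)$ onto $(a,b)$, which the paper leaves implicit.
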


\begin{proof}
To prove the first convergence, let $t \le a$ and $s \in (0,1)$. Now we have that $t \le \frac{1}{S_\mu(s-1)}$ from Lemma \ref{WLLN_S_image} and hence
\begin{align}
\limsup_{n \to \infty} \nu_n([0,t]) \le \limsup_{n \to \infty} \nu_n\left(\left[0,\frac{1}{S_\mu(s-1)}\right]\right) & = s. 
\end{align}
Here the inequality holds because $\nu_n$ is a positive measure and the equality comes from Lemma \ref{WLLN_lem_limit_nun}.
As this holds for all $s \in (0,1)$ we have $\limsup_{n \to \infty} \nu_n([0,t]) \le 0$ and hence $\limsup_{n \to \infty} \nu_n([0,t]) = 0$ by positivity of the measure.

For the second convergence we proceed in the same manner, by letting $t \ge b$ and $s \in (0,1)$. Now we have that $t \ge \frac{1}{S_\mu(s-1)}$ from Lemma \ref{WLLN_S_image} and hence
\begin{align}
\liminf_{n \to \infty} \nu_n([0,t]) \ge \liminf_{n \to \infty} \nu_n\left(\left[0,\frac{1}{S(s-1)}\right]\right) = s.
\end{align}
Again the inequality holds because $\nu_n$ is a positive measure and the equality comes from Lemma \ref{WLLN_lem_limit_nun}.
As this holds for all $s \in (0,1)$ we have $\limsup_{n \to \infty} \nu_n([0,t]) \ge 1$ and hence $\limsup_{n \to \infty} \nu_n([0,t]) = 1$ as $\nu_n$ is a probability measure.
\qed
\end{proof}

Lemmas \ref{WLLN_lem_limit_nun} and \ref{WLLN_lem_vn_v_outside_support} now prove Theorem \ref{WLLN_main_theorem} without any assumptions on bounded support as weak convergence of measures is equivalent to point-wise convergence of distribution functions for all but countably many $x \in [0,\infty)$.

\begin{remark}
\label{WLLN_remark_atom_zero}
In the case $\delta = \mu(\{0\}) > 0$, $S_\mu$ is only defined on $(\delta-1,0)$ and $S_\mu(z) \to \infty$ when $z \to \delta-1$.
This implies that Lemma \ref{WLLN_lem_nun_equality} only holds for $t \in (\delta,1)$, with a similar proof. Similarly Lemma \ref{WLLN_lem_limit_nun} only holds for $t \in (\delta,1)$, and in the proof we have to assume $t' \in (\delta,t)$. Similarly in the proof of Lemma \ref{WLLN_lem_vn_v_outside_support} we have to assume $s \in (\delta,1)$. Moreover, in Lemma \ref{WLLN_lem_vn_v_outside_support} the statement, $0 \le x \le a$ implies $\nu_n([0,x]) \to 0$ for $n \to \infty$, should be changed to $a=0$ and $\nu_n(\{0\}) = \delta = \nu(\{0\})$ for all $n \in \mathbb{N}$.
\end{remark}

Using our result we can prove the following corollary, generalizing a theorem (\cite[Theorem 2.2]{HaagerupSchultz2009}) by H. Schultz and the first author.

Let $(\mathcal{M},\tau)$ be a finite von Neumann algebra $\mathcal{M}$ with a normal faithful tracial state $\tau$. In \cite[Proposition 3.9]{HaagerupSchultz2007} the definition of Brown's spectral distribution measure $\mu_T$ was extended to all operators $T \in \mathcal{M}^\Delta$, where $\mathcal{M}^\Delta$ is the set of unbounded operators affiliated with $\mathcal{M}$ for which $\tau(\ln^+(|T|)) < \infty$.

\index{R-diagonal}
\begin{corollary}
If $T$ is an $R$-diagonal in $\mathcal{M}^\Delta$ then $\dot{\phi}(\mu_{(T^*)^n T^n}) \to \dot{\psi}(\mu_T)$ weakly, where $\psi(z) = |z|^2$, $z \in \mathbb{C}$, and $\phi_n(x)=x^{1/n}$ for $x \ge 0$
\end{corollary}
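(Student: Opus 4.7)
The plan is to reduce the corollary to Theorem \ref{WLLN_main_theorem} applied to the probability measure $\mu := \mu_{T^*T}$ on $[0,\infty)$, i.e.\ the spectral distribution of the positive operator $|T|^2 \in \mathcal{M}^\Delta$ (which is a well-defined probability measure on $[0,\infty)$ under the hypothesis $\tau(\ln^+|T|) < \infty$).

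First I would establish the $R$-diagonal identity
\begin{align}
\mu_{(T^*)^n T^n} = \mu^{\boxtimes n}.
\end{align}
Writing $T = UH$ in polar form with $U$ Haar unitary and $*$-free from $H = |T|$, the product $(T^*)^n T^n$ rearranges as a product of $n$ conjugates of $H^2$ by freely independent Haar unitaries, whose $*$-distribution is precisely $(H^2)^{\boxtimes n}$. For bounded $T$ this is standard Haagerup--Larsen calculus; for $T \in \mathcal{M}^\Delta$ it follows from the extended polar-decomposition and Brown-measure framework of \cite{HaagerupSchultz2007}. Consequently, in the notation of Theorem \ref{WLLN_main_theorem},
\begin{align}
\dot{\phi}_n(\mu_{(T^*)^n T^n}) = \dot{\phi}_n(\mu^{\boxtimes n}) = \nu_n.
\end{align}

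Next I would identify $\dot{\psi}(\mu_T)$ with the limit measure $\nu$ of Theorem \ref{WLLN_main_theorem}. The Haagerup--Larsen formula, extended to $R$-diagonal operators in $\mathcal{M}^\Delta$, states that $\mu_T$ is rotationally symmetric on $\mathbb{C}$, that $\mu_T(\{0\}) = \delta := \mu(\{0\})$, and that
\begin{align}
\mu_T\bigl(\bigl\{z \in \mathbb{C} : |z|^2 \le 1/S_\mu(t-1)\bigr\}\bigr) = t, \qquad t \in (\delta,1).
\end{align}
Pushing forward by $\psi(z) = |z|^2$ produces a measure on $[0,\infty)$ satisfying precisely the conditions characterising $\nu$ in Theorem \ref{WLLN_main_theorem}, so $\dot{\psi}(\mu_T) = \nu$. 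Combining these two identifications with Theorem \ref{WLLN_main_theorem} yields $\dot{\phi}_n(\mu_{(T^*)^n T^n}) = \nu_n \to \nu = \dot{\psi}(\mu_T)$ weakly, as claimed.

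The main obstacle is verifying the two structural inputs -- the $R$-diagonal product identity and the Haagerup--Larsen description of the Brown measure -- in the unbounded affiliated setting $\mathcal{M}^\Delta$ rather than merely for bounded operators. This is exactly the new ingredient required to go beyond \cite[Theorem 2.2]{HaagerupSchultz2009}, and it rests on the extended Brown-measure formalism of \cite{HaagerupSchultz2007} together with the log-integrability hypothesis $\tau(\ln^+|T|) < \infty$; once these are in place, the corollary reduces to an immediate application of the main theorem of this paper.
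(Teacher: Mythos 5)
Your proposal is correct and takes essentially the same route as the paper: the paper's proof consists precisely of the two structural inputs you isolate, citing \cite[Proposition 3.9]{HaagerupSchultz2007} for the identity $\mu_{(T^*)^n T^n} = \mu_{T^*T}^{\boxtimes n}$ and \cite[Theorem 4.17]{HaagerupSchultz2007} for the identification $\dot{\psi}(\mu_T) = \nu$, and then applies Theorem \ref{WLLN_main_theorem}. The only difference is cosmetic: where the paper simply cites these facts for operators in $\mathcal{M}^\Delta$, you additionally sketch why they hold (polar decomposition with a free Haar unitary, and the Haagerup--Larsen radial formula for the Brown measure), which is sound but not needed given the references.
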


\begin{proof}
By \cite[Proposition 3.9]{HaagerupSchultz2007} we have $\mu_{T^*T}^{\boxtimes n} = \mu_{(T^*)^n T^n}$ and by Theorem \ref{WLLN_main_theorem} we have $\dot{\phi}(\mu_{T^*T}^{\boxtimes n}) \to \nu$ weakly. On the other hand observe that $\nu = \dot{\psi}(\mu_T)$ by \cite[Theorem 4.17]{HaagerupSchultz2007} which gives the result.
\qed
\end{proof}

\begin{remark}
In \cite[Theorem 1.5]{HaagerupSchultz2009} it was shown that $\dot{\phi}_n(\mu_{(T^*)^nT^n}) \to \dot{\psi}(\mu_T)$ weakly for all bounded operators $T \in \mathcal{M}$. It would be interesting to know, whether this limit law can be extended to all $T \in \mathcal{M}^\Delta$.
\end{remark}

\section{Further formulas for the $S$-transform}
\label{WLLN_section_further_formulas}

In this section we present some further formulas for the $S$-transform of measures on $[0,\infty)$, obtained by similar means as in the preceding sections and use those to investigate the difference between the laws of large numbers for classical and free probability. From now on we assume $\mu(\{0\}) = 0$. Therefore $\mu$ can be considered as a probability measure on $(0,\infty)$.

We start with a technical lemma which will be useful later.
\begin{lemma}
\label{WLLN_lem_ln_identities}
We have the following identities
\begin{align}
\int_0^1 \ln^2\left(\frac{t}{1-t}\right) \D t & = \frac{\pi^2}{3} \\
\int_0^1 \ln^2 t \D t & = 2 \\
\int_0^1 \ln^2(1-t) \D t & = 2 \\
\int_0^1 \ln t \ln(1-t) \D t & = 2 - \frac{\pi^2}{6}.
\end{align}
\end{lemma}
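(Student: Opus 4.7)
The plan is to prove the four identities in a dependency order: (2) first, then (3) from (2), then (4) using a power series expansion together with $\zeta(2)=\pi^2/6$, and finally (1) by a purely algebraic combination of (2)--(4).

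For (2), I would substitute $u=-\ln t$, which sends $[0,1]$ to $[0,\infty)$ and converts $\D t$ to $-e^{-u}\,\D u$, so the integral becomes $\int_0^\infty u^2 e^{-u}\,\D u=\Gamma(3)=2$. Identity (3) then follows immediately from (2) by the change of variable $s=1-t$, which preserves Lebesgue measure on $[0,1]$.

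The only step with real content is (4). I would expand $\ln(1-t)=-\sum_{n=1}^\infty t^n/n$, which is valid on $[0,1)$, and interchange sum and integral (justified by monotone convergence after noting that all terms $-t^n\ln t/n$ have a fixed sign on $(0,1)$). Using the elementary identity $\int_0^1 t^n\ln t\,\D t=-1/(n+1)^2$ (integration by parts) this gives
\[
\int_0^1 \ln t\,\ln(1-t)\,\D t=\sum_{n=1}^\infty\frac{1}{n(n+1)^2}.
\]
The partial fraction decomposition $\frac{1}{n(n+1)^2}=\frac{1}{n}-\frac{1}{n+1}-\frac{1}{(n+1)^2}$ splits the sum into a telescoping part equal to $1$ and $-\sum_{k=2}^\infty 1/k^2=-(\pi^2/6-1)$, yielding $2-\pi^2/6$.

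Finally, (1) follows by expanding the square:
\[
\ln^2\!\left(\tfrac{t}{1-t}\right)=\ln^2 t+\ln^2(1-t)-2\ln t\,\ln(1-t),
\]
and integrating term by term, which by (2), (3), (4) gives $2+2-2(2-\pi^2/6)=\pi^2/3$. The main (and only nontrivial) obstacle is the evaluation of the series in (4); once the termwise integration is justified and $\zeta(2)=\pi^2/6$ is taken as known, the remaining steps are routine.
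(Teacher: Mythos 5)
Your proof is correct, but it inverts the paper's logical order, so the substantive computation is a genuinely different one. The paper evaluates the \emph{first} integral directly: the substitution $x=\frac{t}{1-t}$ turns it into $\int_0^\infty \ln^2 x\,(1+x)^{-2}\,\D x$, which is recognized as the second derivative at $\alpha=0$ of the Beta integral $B(1+\alpha,1-\alpha)=\frac{\pi\alpha}{\sin(\pi\alpha)}$, and the value $\frac{\pi^2}{3}$ is read off from the Taylor expansion of $\frac{\pi\alpha}{\sin(\pi\alpha)}$; the \emph{fourth} identity is then deduced from the first three by exactly the square-expansion identity you use, read in the opposite direction. You instead make the fourth identity the one with content, evaluating it by termwise integration of the series for $\ln(1-t)$ (correctly justified, since every term $-\frac{1}{n}t^n\ln t$ is nonnegative on $(0,1)$), summing $\sum_{n\ge1}\frac{1}{n(n+1)^2}$ by partial fractions, and invoking $\zeta(2)=\frac{\pi^2}{6}$, after which the first identity follows algebraically. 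The two routes need one transcendental input of comparable depth --- the Euler reflection formula for the paper, the Basel sum for you --- so neither is cheaper in any essential way. The paper's choice has the mild structural advantage that the computation $\int_0^\infty x^\alpha(1+x)^{-2}\,\D x = B(1+\alpha,1-\alpha)$ recurs later (in the proof of Lemma \ref{WLLN_lemma_formula_gamma}), so the technique is amortized; yours is arguably more elementary, avoiding differentiation under the integral sign entirely.
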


\begin{proof}
For the first identity we start with the substitution $x = \frac{t}{1-t}$ which gives us $t = \frac{x}{1+x}$ and $\D t = \frac{\D x}{(1+x)^2}$ and hence
\begin{align}
\int_0^1 \ln^2\left(\frac{t}{1-t}\right) \D t & = \int_0^\infty \frac{\ln^2 x}{(1+x)^2} \D x \\
& = \left. \frac{\D^2}{\D \alpha^2} \int_0^\infty \frac{x^\alpha}{(1+x)^2} \D x \right|_{\alpha=0} \\
& = \left. \frac{\D^2}{\D \alpha^2} B(1+\alpha,1-\alpha) \right|_{\alpha=0} \\
& = \left. \frac{\D^2}{\D \alpha^2} \frac{\pi \alpha}{\sin(\pi \alpha)} \right|_{\alpha=0} \\
& = \left. \frac{\D^2}{\D \alpha^2} \left( 1 - \frac{(\pi \alpha)^2}{3!} + \cdots \right)^{-1} \right|_{\alpha=0} \\
& = \left. \frac{\D^2}{\D \alpha^2} \left( 1 + \frac{\pi^2}{6} \alpha^2 + \cdots \right) \right|_{\alpha=0} = \frac{\pi^2}{3}
\end{align}
where $B(\cdot,\cdot)$ denotes the Beta function. The second and the third identity follow from the substitution $t \mapsto \exp(-x)$, respectively, $1-t \mapsto \exp(-x)$.

Finally, the last identity follows by observing
\begin{align}
\frac{\pi^2}{3} &= \int_0^1 \ln^2\left(\frac{t}{1-t}\right) \D t \\
& = \int_0^1 \ln^2 t + \ln^2(1-t) - 2 \ln t \ln(1-t) \D t \\
& = 4 - 2 \int_0^1 \ln t \ln(1-t) \D t
\end{align}
which gives the desired result.
\qed
\end{proof}

Now we prove two propositions calculating the expectations of $\ln x $ and $\ln^2 x$ both for $\mu$ and $\nu$  expressed by the $S$-transform of $\mu$.

\begin{proposition}
\label{WLLN_proposition_formula_ln}
Let $\mu$ be a probability measure on $(0,\infty)$ and let $\nu$ be as defined in Theorem \ref{WLLN_main_theorem}. Then $\int_0^\infty \left|\ln x \right| \D\mu(x) < \infty$ if and only if $\int_0^1 \left|\ln S_\mu(t-1) \right| \D t < \infty$ and if and only if $\int_0^\infty \left|\ln x \right| \D\nu(x) < \infty$. If these integrals are finite, then
\begin{align}
\int_0^\infty \ln x \D\mu(x) &= - \int_0^1 \ln S_\mu(t-1) \D t = \int_0^\infty \ln x \D\nu(x).
\end{align}
\end{proposition}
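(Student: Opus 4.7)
The plan is to split the proposition into two essentially independent pieces: the equality involving $\nu$ and the equality involving $\mu$, with the three integrability conditions handled along the way. The first piece is easy. By Theorem \ref{WLLN_main_theorem} and the strict monotonicity of $S_\mu$ (Lemma \ref{WLLN_lemma_monotonicity_S}) together with Lemma \ref{WLLN_S_image}, the map $t \mapsto 1/S_\mu(t-1)$ is a strictly increasing homeomorphism of $(0,1)$ onto $(a,b)$, and the condition $\nu([0,1/S_\mu(t-1)]) = t$ identifies $\nu$ on $(a,b)$ as the push-forward of Lebesgue measure under this map. Therefore for every non-negative Borel $f$,
\begin{equation}
\int_0^\infty f(x) \D\nu(x) = \int_0^1 f\bigl(1/S_\mu(t-1)\bigr) \D t.
\end{equation}
Taking $f(x) = |\ln x|$ yields $\int_0^\infty |\ln x|\D\nu(x) = \int_0^1 |\ln S_\mu(t-1)|\D t$; when finite, taking $f(x) = \ln x$ (split into positive and negative parts) yields $\int \ln x \D\nu = -\int_0^1 \ln S_\mu(t-1)\D t$.

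The second piece is the core calculation. Substitute $z = t-1$, and then $z = \psi_\mu(u)$ with $u \in (-\infty,0)$, using that $\psi_\mu \colon (-\infty,0) \to (-1,0)$ is a strictly increasing $C^1$-bijection (established in the proof of Lemma \ref{WLLN_lemma_monotonicity_S}). Using the identity \eqref{WLLN_eq_S_g}, $S_\mu(\psi_\mu(u)) = u(\psi_\mu(u)+1)/\psi_\mu(u)$, and writing $v = -u \in (0,\infty)$ and $s = -\psi_\mu(u) \in (0,1)$ so that $ds = -\psi_\mu'(u)\D u$, I obtain
\begin{equation}
\int_0^1 \ln S_\mu(t-1) \D t = \int_0^1 \Bigl[\ln v(s) + \ln(1-s) - \ln s\Bigr] \D s = \int_0^1 \ln v(s) \D s,
\end{equation}
since $\int_0^1 \ln(1-s)\D s = \int_0^1 \ln s \D s = -1$ cancel. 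Here $v = v(s)$ is the inverse of $s(v) = \int_0^\infty \frac{tv}{1+tv} \D\mu(t)$. To evaluate $\int_0^1 \ln v(s) \D s$, change variables back from $s$ to $v$, giving $\D s = \bigl(\int_0^\infty \frac{t}{(1+tv)^2}\D\mu(t)\bigr) \D v$, and apply Fubini to swap the order of integration, reducing to evaluating the inner integral
\begin{equation}
\int_0^\infty \frac{t \ln v}{(1+tv)^2} \D v = \int_0^\infty \frac{\ln w - \ln t}{(1+w)^2} \D w = -\ln t,
\end{equation}
where I substituted $w = tv$ and used $\int_0^\infty \frac{\D w}{(1+w)^2} = 1$ together with the vanishing $\int_0^\infty \frac{\ln w}{(1+w)^2} \D w = 0$ (by the symmetry $w \mapsto 1/w$). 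This gives $\int_0^1 \ln v(s)\D s = -\int_0^\infty \ln t \D\mu(t)$, completing the chain of equalities.

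For the equivalence of the three integrability conditions, I will repeat the same computation with $|\ln v|$ in place of $\ln v$. Using the elementary bound
\begin{equation}
\bigl||\ln t| - C\bigr| \le \int_0^\infty \frac{|\ln w - \ln t|}{(1+w)^2} \D w \le |\ln t| + C,
\end{equation}
with $C := \int_0^\infty \frac{|\ln w|}{(1+w)^2}\D w < \infty$, Tonelli's theorem gives that $\int_0^1 |\ln v(s)| \D s$ and $\int_0^\infty |\ln t| \D\mu(t)$ are simultaneously finite; combined with the push-forward identity from the first paragraph, this yields the equivalence of all three conditions. The main obstacle is purely bookkeeping: tracking signs through the chain of substitutions $t \to z \to u \to s \to v$ (several of which are negative throughout) and justifying Fubini before it is applied, which is precisely what this preliminary Tonelli step accomplishes.
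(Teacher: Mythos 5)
Your proof is correct and takes essentially the same route as the paper's: both treat the $\nu$-part by the push-forward substitution $x = S_\mu(t-1)^{-1}$, and both convert $\int_0^1 \ln S_\mu(t-1) \D t$ into a double integral via the $\psi_\mu$-parametrization and then evaluate an explicit inner integral — your $\int_0^\infty \frac{t\ln v}{(1+tv)^2}\D v = -\ln t$ is exactly the combination of the paper's two partial integrations over $(0,1)$ and $(1,\infty)$ — with Tonelli on absolute values securing both the integrability equivalences and the justification of Fubini. The only missing point is the trivial case where $\mu$ is a Dirac measure: there $S_\mu$ is constant, so the homeomorphism claim in your first paragraph fails, and (as the paper notes in one line) the identities must instead be checked directly, which is immediate since $\nu = \mu = \delta_c$ and $S_\mu \equiv 1/c$.
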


\begin{proof}
For $x>0$, put $\ln^+x = \max(\ln x, 0)$ and $\ln^-x=\max(-\ln x,0)$. Then one easily checks that
\begin{align}
\ln^+x \le \ln(x+1) \le \ln^+x + \ln 2
\end{align}
and by replacing $x$ by $\frac{1}{x}$ it follows that 
\begin{align}
\ln^- x \le \ln\left(\frac{x+1}{x}\right) \le \ln^-x + \ln 2.
\end{align}
Hence
\begin{align}
\int_0^\infty \ln^+x \D\mu(x) < \infty \Leftrightarrow \int_0^\infty \ln(x+1) \D\mu(x) < \infty
\end{align}
and
\begin{align}
\int_0^\infty \ln^-x \D\mu(x) < \infty \Leftrightarrow \int_0^\infty \ln\left(\frac{x+1}{x}\right) \D\mu(x) < \infty.
\end{align}
We prove next that
\begin{align}
\label{WLLN_eq_integral_ln_1}
\int_0^\infty \ln(x+1) \D\mu(x) = \int_0^\infty \ln^-u \psi_\mu'(-u) \D u
\end{align}
and
\begin{align}
\label{WLLN_eq_integral_ln_2}
\int_0^\infty \ln\left(\frac{x+1}{x}\right) \D\mu(x) = \int_0^\infty \ln^+u \psi_\mu'(-u) \D u.
\end{align}

Recall from \eqref{WLLN_eq_monotonicity_psi}, that 
\begin{align}
\psi_\mu'(-u) & = \int_0^\infty \frac{t}{(1+ut)^2} \D\mu(t), \quad u > 0.
\end{align}

Hence by Tonelli's Theorem
\begin{align}
\int_0^\infty \ln^+u \psi_\mu'(-u) \D u = \int_1^\infty \ln u \psi_\mu'(-u) \D u 
= \int_0^\infty \int_1^\infty \frac{x}{(1+ux)^2} \ln u \D u \D \mu(x)
\end{align}
and similarly
\begin{align}
\int_0^\infty \ln^-u \psi_\mu'(-u) \D u 
= \int_0^\infty \int_0^1 \frac{x}{(1+ux)^2} \ln\left(\frac{1}{u}\right) \D u \D \mu(x).
\end{align}

By partial integration, we have
\begin{align}
\int_1^\infty \frac{x}{(1+ux)^2} \ln u \D u = \left[-\frac{\ln u}{1+ux} + \ln\left(\frac{u}{1+ux}\right)\right]_{u=1}^{u=\infty} = \ln\left(\frac{x+1}{x}\right)
\end{align}
and similarly
\begin{align}
\int_0^1 \frac{x}{(1+ux)^2} \ln\left(\frac{1}{u}\right) \D u & = \left[\frac{\ln u}{1+ux} - \ln\left(\frac{u}{1+ux}\right)\right]_{u=0}^{u=1} \\
& = \left[\frac{ux}{1+ux}\ln u + \ln(1+ux)\right]_{u=0}^{u=1} = \ln(x+1)
\end{align}
which proves \eqref{WLLN_eq_integral_ln_1} and \eqref{WLLN_eq_integral_ln_2}. Therefore
\begin{align}
\int_0^\infty \left|\ln x \right| \D\mu(x) < \infty \Leftrightarrow \int_0^\infty \left|\ln u \right| \psi_\mu'(-u) \D u < \infty
\end{align}
and substituting $x=\psi_\mu(-u)+1$ we get
\begin{align}
\int_0^\infty \left|\ln u \right| \psi_\mu'(-u) \D u = \int_0^1 \left|\ln\left(-\chi_\mu(t-1)\right)\right| \D t = \int_0^1 \left|\ln\left(\frac{t}{1-t}\right) + \ln S_\mu(t-1)\right| \D t.
\end{align}
Since $\int_0^1 \left|\ln\left(\frac{t}{1-t}\right)\right| \D t < \infty$ it follows that
\begin{align}
\int_0^\infty \left|\ln u \right| \psi_\mu'(-u) \D u < \infty \Leftrightarrow \int_0^1 \left|\ln S_\mu(t-1)\right| \D t < \infty.
\end{align}

If $\mu$ is not a Dirac measure, the substitution $x=S_\mu(t-1)^{-1}, 0 < t < 1$ gives $t=\nu((0,x])$ for $ a < x < b$, where as before $a = \left( \int_0^\infty x^{-1} \D\mu(x) \right)^{-1}$ and $b = \int_0^\infty x \D\mu(x)$. The measure $\nu$ is concentrated on the interval $(a,b)$. Hence 
\begin{align}
\int_0^1 \left| \ln x \right| \D\nu(x) = \int_a^b \left| \ln x \right| \D\nu(x) = \int_0^1 \left| \ln\left(\frac{1}{S_\mu(t-1)}\right)\right| \D t = \int_0^1 \left| \ln S_\mu(t-1) \right| \D t.
\end{align}

This proves the first statement in Proposition \ref{WLLN_proposition_formula_ln}. If all three integrals in that statement are finite, we get
\begin{align}
\int_0^\infty \ln x \D\mu(x) & = \int_0^\infty \ln(x+1) \D\mu(x) - \int_0^\infty \ln\left(\frac{x+1}{x}\right) \D\mu(x) \\
& = \int_0^\infty \left(\ln^-u - \ln^+ u\right)\psi_\mu'(-u) \D u = -\int_0^\infty \ln u \psi_\mu'(-u) \D u.
\end{align}

By the substitution $t=\psi_\mu(-u)+1$ we get
\begin{align}
\int_0^1 \ln\left(-\chi_\mu(t-1)\right) \D t &= \int_0^1 \left(\ln\left(\frac{1-t}{t}\right) + \ln S_\mu(t-1)\right) \D t = \int_0^1 \ln S_\mu(t-1) \D t.
\end{align}

Hence $\int_0^\infty \ln x \D \mu(x) = - \int_0^1 \ln S_\mu(t-1) \D t$. Moreover, by the substitution $x = S_\mu(t-1)^{-1}, 0 < t < 1$ we get
\begin{align}
\int_0^\infty \ln x \D\mu(x) &= \int_0^1 \ln \left(\frac{1}{S_\mu(t-1)}\right) \D t = \int_0^\infty \ln x \D\nu(x).
\end{align}
Finally, if $\mu=\delta_x$, $x \in (0,\infty)$, this identity holds trivially, because $\nu=\delta_x$ and $S_\nu(z) = \frac{1}{x}, 0 < z < 1$.
\qed
\end{proof}

\begin{corollary}
Let $\mu_1$ and $\mu_2$ be probability measures on $(0,\infty)$. If $\mathbb{E}_{\mu_1}(\ln x)$ and $\mathbb{E}_{\mu_2}(\ln x)$ exist then $\mathbb{E}_{\mu_1 \boxtimes \mu_2}(\ln x)$ also exists and
\begin{align}
\mathbb{E}_{\mu_1 \boxtimes \mu_2}(\ln x) = \mathbb{E}_{\mu_1}(\ln x) + \mathbb{E}_{\mu_2}(\ln x)
\end{align}
where $\mathbb{E}_\mu(f) = \int_0^\infty f(x) \D \mu(x)$.
\end{corollary}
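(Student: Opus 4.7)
The plan is to convert the statement to the $S$-transform side via Proposition \ref{WLLN_proposition_formula_ln} and then exploit the multiplicativity of $S$ under $\boxtimes$ from Lemma \ref{WLLN_lem_Smunu_eq_Smu_Snu}. Since $\mu_1,\mu_2$ are concentrated on $(0,\infty)$, one has $\mu_i(\{0\})=0$, and since no atom at $0$ can be produced by free multiplicative convolution of two atom-free measures (equivalently, the $S$-transform of $\mu_1\boxtimes \mu_2$ is defined on all of $(-1,0)$ as the product $S_{\mu_1}S_{\mu_2}$), $\mu_1\boxtimes\mu_2$ is also supported on $(0,\infty)$, so Proposition \ref{WLLN_proposition_formula_ln} applies to all three measures.

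First I would use the finiteness part of Proposition \ref{WLLN_proposition_formula_ln}: the hypotheses $\mathbb{E}_{\mu_i}(\ln x)<\infty$ are equivalent to
\begin{align}
\int_0^1 |\ln S_{\mu_i}(t-1)|\,\D t <\infty,\qquad i=1,2.
\end{align}
By Lemma \ref{WLLN_lem_Smunu_eq_Smu_Snu}, $\ln S_{\mu_1\boxtimes\mu_2}(t-1)=\ln S_{\mu_1}(t-1)+\ln S_{\mu_2}(t-1)$ for $t\in(0,1)$, and the triangle inequality gives
\begin{align}
\int_0^1 |\ln S_{\mu_1\boxtimes\mu_2}(t-1)|\,\D t \le \int_0^1 |\ln S_{\mu_1}(t-1)|\,\D t + \int_0^1 |\ln S_{\mu_2}(t-1)|\,\D t <\infty,
\end{align}
so by the other direction of Proposition \ref{WLLN_proposition_formula_ln} the integral $\mathbb{E}_{\mu_1\boxtimes\mu_2}(\ln x)$ exists. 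The identity then follows by the equality part of the same proposition:
\begin{align}
\mathbb{E}_{\mu_1\boxtimes\mu_2}(\ln x) &= -\int_0^1 \ln S_{\mu_1\boxtimes\mu_2}(t-1)\,\D t\\
&= -\int_0^1 \ln S_{\mu_1}(t-1)\,\D t -\int_0^1 \ln S_{\mu_2}(t-1)\,\D t\\
&= \mathbb{E}_{\mu_1}(\ln x)+\mathbb{E}_{\mu_2}(\ln x).
\end{align}

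The only real subtlety is the verification that $\mu_1\boxtimes\mu_2$ has no atom at $0$, so that the appropriate branch of the proposition applies; this is immediate from Bercovici--Voiculescu's construction of $\boxtimes$ in \cite{BercoviciVoiculescu1993} (which defines $S_{\mu_1\boxtimes\mu_2}$ on $(-1,0)$ precisely when $\mu_i(\{0\})=0$), but if one wishes to avoid invoking this, it suffices to note that the proof of Proposition \ref{WLLN_proposition_formula_ln} only uses the defining identity $S_\mu=\frac{z+1}{z}\chi_\mu$ on whatever interval $(\delta-1,0)$ the $S$-transform is defined, and the multiplicativity of $S$ from Lemma \ref{WLLN_lem_Smunu_eq_Smu_Snu} forces this interval to be $(-1,0)$ for $\mu_1\boxtimes\mu_2$. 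No serious obstacle arises; the whole argument is a one-line computation once the $S$-transform translation is in place.
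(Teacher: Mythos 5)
Your proof is correct and follows exactly the route of the paper, whose entire proof reads ``The statement follows directly from Proposition \ref{WLLN_proposition_formula_ln} and multiplicativity of the $S$-transform''; you have simply spelled out the routine details (the triangle-inequality check for finiteness and the absence of an atom at $0$ for $\mu_1\boxtimes\mu_2$) that the paper leaves implicit.
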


\begin{proof}
The statement follows directly from Proposition \ref{WLLN_proposition_formula_ln} and multiplicativity of the $S$-transform.
\qed
\end{proof}

For further use, we define the map $\rho$ for a probability measure $\mu$ on $(0,\infty)$ by
\begin{align}
\rho(\mu) &= \int_0^1 \ln\left(\frac{1-t}{t}\right) \ln S_\mu(t-1) \D t.
\end{align}

Note that $\rho(\mu)$ is well-defined and non-negative for all probability measures on $(0,\infty)$ because
\begin{align}
\label{WLLN_eq_rho_left_right}
\ln\left(\frac{1-t}{t}\right) \ln S_\mu(t-1) &= \ln\left(\frac{1-t}{t}\right)\ln\left(\frac{S_\mu(t-1)}{S_\mu(-\frac{1}{2})}\right) + \ln\left(\frac{1-t}{t}\right)S_\mu\left(-\frac{1}{2}\right),
\end{align}
where the first term on the right hand side is non-negative for all $t \in (0,1)$ and the second term is integrable with integral $0$.

\begin{lemma}
\label{WLLN_lem_rho}
Let $\mu$ be a probability measure on $(0,\infty)$, then
\begin{align}
0 \le \rho(\mu) \le \frac{\pi}{\sqrt{3}} \left( \int_0^1 \ln^2 S_\mu(t-1) \D t \right)^{1/2}.
\end{align}
Furthermore, $\rho(\mu) = 0$ if and only if $\mu$ is a Dirac measure. Moreover, equality holds in the right inequality if and only if $S_\mu(z) = \left(\frac{z}{1+z}\right)^\gamma$ for some $\gamma>0$ and in this case $\rho(\mu) = \gamma \frac{\pi^2}{3}$.
Additionally, if $\mu_1, \mu_2$ are probability measures on $(0, \infty)$ we have $\rho(\mu_1 \boxtimes \mu_2) = \rho(\mu_1) + \rho(\mu_2)$.
\end{lemma}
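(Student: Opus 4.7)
The plan is to read $\rho(\mu)$ as an $L^2((0,1),\D t)$ inner product of $\ln((1-t)/t)$ and $\ln S_\mu(t-1)$, and then combine a sign analysis of the integrand with the Cauchy--Schwarz inequality.

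First I would establish the non-negativity $\rho(\mu) \ge 0$ together with the fact that $\rho(\mu)=0$ exactly when $\mu$ is a Dirac measure. Starting from \eqref{WLLN_eq_rho_left_right} (reading the last factor there as $\ln S_\mu(-1/2)$), the second summand contributes $0$ to $\rho(\mu)$ because $\int_0^1 \ln((1-t)/t)\D t = 0$ by the involution $t\mapsto 1-t$. For the first summand, both factors $\ln((1-t)/t)$ and $\ln(S_\mu(t-1)/S_\mu(-1/2))$ are decreasing in $t$ and vanish at $t=1/2$---the latter because $S_\mu$ is decreasing on $(-1,0)$ by Lemma \ref{WLLN_lem_prop_Stransform}---so their product is non-negative pointwise, proving $\rho(\mu)\ge 0$. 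For non-Dirac $\mu$, the strict monotonicity given by Lemma \ref{WLLN_lemma_monotonicity_S} upgrades this to strict positivity off $t=1/2$, giving $\rho(\mu)>0$; conversely for $\mu=\delta_x$ the $S$-transform is the constant $1/x$, so $\ln S_\mu(t-1)$ is constant and $\rho(\mu)=0$ by the same symmetry integral.

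For the upper bound, applying the Cauchy--Schwarz inequality in $L^2((0,1),\D t)$ to the two factors of the integrand defining $\rho(\mu)$ gives
\begin{align}
\rho(\mu) \le \left(\int_0^1 \ln^2\frac{1-t}{t}\D t\right)^{1/2}\left(\int_0^1 \ln^2 S_\mu(t-1)\D t\right)^{1/2},
\end{align}
and the first factor equals $\pi/\sqrt{3}$ by Lemma \ref{WLLN_lem_ln_identities}. Equality in Cauchy--Schwarz forces $\ln S_\mu(t-1)=\gamma\ln((1-t)/t)$ almost everywhere for some real $\gamma$, and non-negativity of $\rho$ combined with $\int\ln^2((1-t)/t)\D t>0$ forces $\gamma\ge 0$, with $\gamma>0$ excluding the trivial Dirac case. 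In the variable $z=t-1$ this becomes $S_\mu(z)=(-z/(1+z))^\gamma$ on $(-1,0)$---the member $\mu_{\gamma,\gamma}$ of the two-parameter family from the introduction---and plugging back into the definition of $\rho$ gives $\rho(\mu)=\gamma\int_0^1 \ln^2((1-t)/t)\D t = \gamma\pi^2/3$.

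Finally, additivity is immediate from Lemma \ref{WLLN_lem_Smunu_eq_Smu_Snu}: the multiplicativity $S_{\mu_1\boxtimes\mu_2}=S_{\mu_1}S_{\mu_2}$ yields $\ln S_{\mu_1\boxtimes\mu_2}(t-1)=\ln S_{\mu_1}(t-1)+\ln S_{\mu_2}(t-1)$, and linearity of the integral then gives $\rho(\mu_1\boxtimes\mu_2)=\rho(\mu_1)+\rho(\mu_2)$. I do not expect any step to be seriously hard; the one mild subtlety is the sign convention in the equality form of $S_\mu$, where the lemma's $\left(\frac{z}{1+z}\right)^\gamma$ must be read as $\left(\frac{-z}{1+z}\right)^\gamma$ on $(-1,0)$ so that the expression is real and positive.
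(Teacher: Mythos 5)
Your proposal is correct and follows essentially the same route as the paper's own proof: the decomposition \eqref{WLLN_eq_rho_left_right} with the symmetry of $\ln\left(\frac{1-t}{t}\right)$ for non-negativity and the Dirac characterisation, Cauchy--Schwarz in $L^2((0,1))$ with Lemma \ref{WLLN_lem_ln_identities} for the upper bound and its equality case, and multiplicativity of the $S$-transform for additivity. Your two side remarks are also accurate: the last factor in \eqref{WLLN_eq_rho_left_right} should indeed read $\ln S_\mu\left(-\frac{1}{2}\right)$, and the equality case must be read as $S_\mu(z)=\left(\frac{-z}{1+z}\right)^{\gamma}$ on $(-1,0)$, exactly as the paper's own proof derives.
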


\begin{proof}
We already have observed $\rho \ge 0$. For the second inequality observe that
\begin{align}
\rho(\mu)^2 \le \left( \int_0^1 \ln^2\left(\frac{1-t}{t}\right) \D t \right)\left(\int_0^1 \ln^2 S_\mu(t-1) \D t \right)
\end{align}
by the Cauchy-Schwarz-inequality, where the first term equals $\frac{\pi^2}{3}$ by Lemma \ref{WLLN_lem_ln_identities}.

If $\mu=\delta_a$ for some $a>0$ we have $S_\mu(z) = \frac{1}{a}$, hence $\ln S_\mu(t-1)$ is constant so the oddity of $\ln(\frac{1-t}{t})$ gives us $\rho(\mu)=0$. On the other hand, if $\rho(\mu)=0$,  the first term in \eqref{WLLN_eq_rho_left_right} has to integrate to $0$, but by symmetry of $\ln\left(\frac{1-t}{t}\right)$ and the fact that $S_\mu$ is decreasing, this implies that $S_\mu$ must be constant, hence $\mu$ is a Dirac measure.

Equality in the second inequality, by the Cauchy-Schwarz inequality happens precisely if $\ln S_\mu(t-1) = \gamma \ln(\frac{1-t}{t})$ for some $\gamma > 0$ which is the case if and only if $S_\mu(t-1) = \left(\frac{1-t}{t}\right)^\gamma$, and in this case $\rho(\mu) = \gamma \frac{\pi^2}{3}$ by Lemma \ref{WLLN_lem_ln_identities}.

For the last formula we use multiplicity of the $S$-transform to get
\begin{align}
\rho(\mu_1 \boxtimes \mu_2) &= \int_0^1 \ln\left(\frac{1-t}{t}\right) \ln S_{\mu_1 \boxtimes \mu_2}(t-1) \D t \\
&= \int_0^1 \ln\left(\frac{1-t}{t}\right)\left( \ln S_{\mu_1}(t-1) + \ln S_{\mu_2}(t-1) \right) \D t \\
&= \rho(\mu_1) + \rho(\mu_2).
\end{align}
\qed
\end{proof}

\begin{proposition}
\label{WLLN_proposition_formula_ln2}
Let $\mu$ be a probability measure on $(0,\infty)$, and let $\nu$ be defined as in Theorem \ref{WLLN_main_theorem}. Then
\begin{align}
\int_0^\infty \ln^2 x \D\mu(x) & = \int_0^1 \ln^2 S_\mu(t-1) \D t +  2 \rho(\mu) \\
\int_0^\infty \ln^2 x \D\nu(x) & = \int_0^1 \ln^2 S_\mu(t-1) \D t \\
\mathbb{V}_\mu(\ln x) &= \mathbb{V}_\nu(\ln x) + 2\rho(\mu).
\end{align}
as equalities of numbers in $[0,\infty]$, where $\mathbb{V}_\sigma(\ln x)$ denotes the variance of $\ln x$ with respect to a probability measure $\sigma$ on $(0,\infty)$. 
Moreover 
\begin{align}
0 \le \rho(\mu) \le \frac{\pi}{\sqrt{3}} \mathbb{V}_\nu(\ln x)^{\frac{1}{2}}.
\end{align}
\end{proposition}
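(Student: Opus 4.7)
The plan is to express $\int_0^\infty \ln^2 x \, \D\mu(x)$ in terms of $\psi_\mu'$ via Tonelli-plus-substitution, then in terms of $S_\mu$ via the change of variable $t = \psi_\mu(-u)+1$ that already appeared in Proposition \ref{WLLN_proposition_formula_ln}. First I would note the general identity
\begin{align}
\int_0^\infty f(u)\,\psi_\mu'(-u)\,\D u = \int_0^\infty \int_0^\infty f(s/t)\,\frac{\D s}{(1+s)^2}\,\D\mu(t)
\end{align}
for non-negative Borel $f$, obtained by Tonelli and the inner substitution $s = ut$. Applied with $f(u) = \ln^2 u$, expanding $\ln^2(s/t) = \ln^2 s - 2\ln s\ln t + \ln^2 t$ and evaluating
\begin{align}
\int_0^\infty \frac{\D s}{(1+s)^2} = 1,\quad \int_0^\infty \frac{\ln s\,\D s}{(1+s)^2} = 0,\quad \int_0^\infty \frac{\ln^2 s\,\D s}{(1+s)^2} = \frac{\pi^2}{3}
\end{align}
(the second by $s \mapsto 1/s$, the third a restatement of the first identity in Lemma \ref{WLLN_lem_ln_identities} via $s = t/(1-t)$), this gives the equality in $[0,\infty]$:
\begin{align}
\int_0^\infty \ln^2 u\,\psi_\mu'(-u)\,\D u = \int_0^\infty \ln^2 t\,\D\mu(t) + \frac{\pi^2}{3}.
\end{align}

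Next, the substitution $u = -\chi_\mu(t-1) = \frac{1-t}{t} S_\mu(t-1)$, under which $\psi_\mu'(-u)\,\D u = -\D t$ and $\ln u = \ln((1-t)/t) + \ln S_\mu(t-1)$, identifies the left-hand side with
\begin{align}
\int_0^1 \bigl[\ln((1-t)/t) + \ln S_\mu(t-1)\bigr]^2 \D t.
\end{align}
Expanding the square and recognising the three resulting integrals as $\pi^2/3$ (by Lemma \ref{WLLN_lem_ln_identities}), $2\rho(\mu)$ (by definition of $\rho$), and $\int_0^1 \ln^2 S_\mu(t-1)\,\D t$ respectively, then cancelling the common $\pi^2/3$, proves the first identity. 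Validity of this expansion in $[0,\infty]$ follows from $(a+b)^2 \le 2a^2 + 2b^2$, together with $\rho(\mu) \le (\pi/\sqrt{3})\bigl(\int_0^1 \ln^2 S_\mu(t-1)\,\D t\bigr)^{1/2}$ from Lemma \ref{WLLN_lem_rho}: these imply that all three quantities are simultaneously finite or infinite. The second identity comes from the substitution $x = 1/S_\mu(t-1)$, which by Theorem \ref{WLLN_main_theorem} identifies $t$ with the distribution function of $\nu$; the variance formula then follows by subtracting the square of the common mean $-\int_0^1 \ln S_\mu(t-1)\,\D t$ of $\ln x$ for $\mu$ and $\nu$ (Proposition \ref{WLLN_proposition_formula_ln}).

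For the final inequality I would exploit that $\int_0^1 \ln((1-t)/t)\,\D t = 0$ (by the symmetry $t \mapsto 1-t$), so that for any constant $c$, and in particular $c = \int_0^1 \ln S_\mu(t-1)\,\D t$,
\begin{align}
\rho(\mu) = \int_0^1 \ln((1-t)/t)\bigl[\ln S_\mu(t-1) - c\bigr]\D t.
\end{align}
The Cauchy-Schwarz inequality, combined with Lemma \ref{WLLN_lem_ln_identities}, then yields $\rho(\mu)^2 \le (\pi^2/3) \int_0^1 [\ln S_\mu(t-1) - c]^2 \D t = (\pi^2/3)\,\mathbb{V}_\nu(\ln x)$, which is the desired bound. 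The main obstacle throughout is the careful bookkeeping needed to ensure every equality holds in $[0,\infty]$ when some of the integrals may be infinite; the comparisons $(a \pm b)^2 \le 2a^2 + 2b^2$ combined with $\int_0^1 \ln^2((1-t)/t)\,\D t = \pi^2/3 < \infty$ reduce this to a routine check.
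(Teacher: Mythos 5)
Your proof is correct and takes essentially the same route as the paper's: the same Tonelli-plus-substitution identity $\int_0^\infty \ln^2 u\, \psi_\mu'(-u)\,\D u = \int_0^\infty \ln^2 x \,\D\mu(x) + \tfrac{\pi^2}{3}$, the same change of variables $t=\psi_\mu(-u)+1$ and $x = S_\mu(t-1)^{-1}$, and the same square-integrability bookkeeping to make the identities valid in $[0,\infty]$. The only cosmetic difference is the final inequality, where you center $\ln S_\mu(t-1)$ at its mean directly, using $\int_0^1 \ln\left(\frac{1-t}{t}\right) \D t = 0$, before applying Cauchy--Schwarz, whereas the paper performs the identical centering by passing to the dilated measure $\mu_c$ with $c=\exp\left(\mathbb{E}_\nu(\ln x)\right)$ and invoking the scaling invariance $\rho(\mu_c)=\rho(\mu)$.
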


\begin{proof}
We first prove the following identity
\begin{align}
\label{WLLN_eq_formula_ln2_1}
\int_0^\infty \ln^2 u \psi_\mu'(-u) \D u &= \int_0^\infty \ln^2 x \D \mu(x) + \frac{\pi^2}{3}.
\end{align}
Since $\psi'(-u) = \int_0^\infty \frac{x}{(1+ux)^2} \D x$, we get by Tonelli's Theorem, that
\begin{align}
\int_0^\infty \ln^2 u \psi_\mu'(-u) \D u &= \int_0^\infty \left( \int_0^\infty \ln^2 u \frac{x}{(1+ux)^2} \D u \right) \D \mu(x) \\
& = \int_0^\infty \left( \int_0^\infty \ln^2 \left(\frac{v}{x}\right) \frac{\D v}{(1+v)^2} \right) \D \mu(x).
\end{align}
Note next that
\begin{align}
\int_0^\infty \ln^2 \left(\frac{v}{x}\right) \frac{\D v}{(1+v)^2} = c_0 + c_1 \ln x + c_2 \ln^2 x
\end{align}
where $c_0 = \int_0^\infty \frac{\ln^2 v}{(1+v)^2} \D v$, $c_1 = -2\int_0^\infty \frac{\ln v}{(1+v)^2} \D v$, and $c_2 = \int_0^\infty \frac{1}{(1+v)^2} \D v = 1$. Moreover, by the substitution $v = \frac{1}{w}$ one gets $c_1 = -c_1$ and hence $c_1=0$. Finally, by the substitution $v = \frac{t}{1-t}, 0 < t < 1$ and Lemma \ref{WLLN_lem_ln_identities},
\begin{align}
c_0 = \int_0^1 \ln^2\left(\frac{t}{1-t}\right) \D t = \frac{\pi^2}{3}.
\end{align}
Hence
\begin{align}
\int_0^\infty \ln^2 u \psi_\mu(-u) \D u = \int_0^\infty \left( \ln^2 x + \frac{\pi^2}{3}\right) \D\mu(x)
\end{align}
which proves \eqref{WLLN_eq_formula_ln2_1}. Next by the substitution $t=\psi_\mu(-u)+1$, we have
\begin{align}
\label{WLLN_eq_formula_ln2_2}
\int_0^\infty \ln^2 u \psi_\mu'(-u)\D u = \int_0^1 \ln^2\left(-\chi_\mu(t-1)\right)\D t = \int_0^1 \left( \ln\left(\frac{1-t}{t}\right)+\ln S_\mu(t-1)\right)^2\D t.
\end{align}
Since $t \mapsto \ln\left(\frac{1-t}{t}\right)$ is square integrable on $(0,1)$ the right hand side of \eqref{WLLN_eq_formula_ln2_2} is finite if and only if 
\begin{align}
\int_0^1 \ln\left(S_\mu(t-1)\right)^2 \D t < \infty.
\end{align}
Hence by \eqref{WLLN_eq_formula_ln2_1} and \eqref{WLLN_eq_formula_ln2_2} this condition is equivalent to
\begin{align}
\int_0^\infty \ln^2 x \D\mu(x) < \infty,
\end{align}
so to prove the first equation in Proposition \ref{WLLN_proposition_formula_ln2} is suffices to consider the case, where the two above integrals are finite. In that case $\rho(\mu) < \infty$ by Lemma \ref{WLLN_lem_rho}. Thus by Lemma \ref{WLLN_lem_ln_identities} and the definition of $\rho(\mu)$,
\begin{align}
\int_0^1 \left(\ln\left(\frac{1-t}{t}\right) + \ln S_\mu(t-1)\right)^2 \D t = \int_0^1 \ln^2\left(S_\mu(t-1)\right)\D t + 2 \rho(\mu) + \frac{\pi^2}{3}.
\end{align}
Hence by \eqref{WLLN_eq_formula_ln2_1} and \eqref{WLLN_eq_formula_ln2_2}
\begin{align}
\int_0^\infty \ln^2 x \D\mu(x) = \int_0^1 \ln^2\left(S_\mu(t-1)\right)\D t + 2 \rho(\mu).
\end{align}

The second equality in Proposition \ref{WLLN_proposition_formula_ln2}
\begin{align}
\int_0^\infty \ln^2 x \D\nu(x) & = \int_0^1 \ln^2 S_\mu(t-1) \D t
\end{align}
follows from the substitution $x=S_\mu(t-1)^{-1}$ in case $\mu$ is not a Dirac measure, and it is trivially true for Dirac measures. By the first two equalities in Proposition \ref{WLLN_proposition_formula_ln2}, we have
\begin{align}
\label{WLLN_eq_formula_ln2_3}
\int_0^\infty \ln^2 x \D\mu(x) = \int_0^\infty \ln^2 x \D\nu(x) + 2 \rho(\mu).
\end{align}
If both sides of this equality are finite, then by Proposition \ref{WLLN_proposition_formula_ln},
\begin{align}
\int_0^\infty \ln x \D\mu(x) = \int_0^\infty \ln x \D\nu(x)
\end{align}
where both integrals are well-defined. Combined with \eqref{WLLN_eq_formula_ln2_3} we get
\begin{align}
\label{WLLN_eq_formula_ln2_4}
\mathbb{V}_\mu(\ln x) = \mathbb{V}_\nu(\ln x) + 2\rho(\mu)
\end{align}
and if $\int_0^\infty \ln^2 x \D\mu(x) = + \infty$, both sides of \eqref{WLLN_eq_formula_ln2_4} must be infinite by \eqref{WLLN_eq_formula_ln2_3}.

As the $S$-transform behaves linearly when scaling the probability distribution in the sense that the image measure $\mu_c$ of $\mu$ under $x \mapsto cx$ for $c>0$ gives us $S_{\mu_c}(z) = c^{-1} S_{\mu}(z)$ we have for $\rho$ that 
\begin{align}
\rho(\mu_c) &= \int_0^1 \ln\left(\frac{1-t}{t}\right) \ln(c^{-1} S_{\mu}(t-1)) \D t \\
& = \int_0^1 \ln\left(\frac{1-t}{t}\right) \ln S_{\mu}(t-1) \D t + \int_0^1 \ln\left(\frac{1-t}{t}\right) c^{-1} \D t = \rho(\mu) + 0
\end{align}
by anti-symmetry of the second term around $t=\frac{1}{2}$. Using this for $c = \exp\left(\mathbb{E}_\nu(\ln x)\right)$, we get
\begin{align}
\rho(\mu)=\rho(\mu_c) & \le \frac{\pi}{\sqrt{3}}\left(\int_0^1 \left( \ln S_\mu(t-1) - \mathbb{E}_\nu\left(\ln x\right)\right)^2 \D t\right)^{\frac{1}{2}} \\
& = \frac{\pi}{\sqrt{3}}\left(\int_0^1 \left( \ln S_\mu(t-1)^2 - 2\mathbb{E}_\nu\left(\ln x\right)^2 + \mathbb{E}_\nu\left(\ln x\right)^2\right) \D t\right)^{\frac{1}{2}} \\
& = \frac{\pi}{\sqrt{3}} \left(\mathbb{V}_\nu(\ln x)\right)^{\frac{1}{2}}.
\end{align}
\qed
\end{proof}

Now we can use the preceeding lemmas to investigate the different behavior of the multiplicative law of large numbers in classical and free probability.
Note that in classical probability for a family of identically distributed independent random variables $(X_i)_{i=1}^\infty$ we have the identity $\mathbb{V}( \ln(\prod_{i=0}^n X_i) ) = n \mathbb{V}( \ln X_1 )$.
In free probability by Propositions \ref{WLLN_proposition_formula_ln} and \ref{WLLN_proposition_formula_ln2} we have instead
\begin{align}
& \mathbb{V}_{\mu^{\boxtimes n}}( \ln t ) \\
& = \int_0^\infty \ln^2 t \D(\mu^{\boxtimes n})(t) - \left( \int_0^\infty \ln t \D(\mu^{\boxtimes n})(t) \right)^2 \\
& = \int_0^1 \ln^2 S_{\mu^{\boxtimes n}}(t-1) \D z +  2 \rho(\mu^{\boxtimes n}) - \left( - \int_{-1}^0 \ln S_{\mu^{\boxtimes n}}(z) \D z \right)^2 \\
& = n^2 \int_0^1 \ln^2 S_{\mu}(t-1) \D z + 2 n \rho(\mu) - n^2 \left( \int_{-1}^0 \ln S_{\mu}(z) \D z \right)^2 \\
& = n^2 \mathbb{V}_\nu( \ln x ) + 2n \rho(\mu).
\end{align}
Hence $\mathbb{V}_{\mu^{\boxtimes n}}(\ln t) = n \mathbb{V}_\mu(\ln t) + n(n-1) \mathbb{V}_\nu(\ln t) > n \mathbb{V}_\mu(\ln t)$ for $n \ge 2$ if $\mu$ is not a Dirac measure and $\mathbb{V}_\nu(\ln t) < \infty$, which shows that the variance of $\ln t$ is not in general additive.

\begin{lemma}
\label{WLLN_lemma_formula_gamma}
Let $\mu$ be a probability measure on $(0,\infty)$ and let $\nu$ be defined as in Theorem \ref{WLLN_main_theorem}. Then
\begin{align}
\int_0^\infty x^\gamma \D\mu(x) = \frac{\sin(\pi\gamma)}{\pi\gamma} \int_0^1 \left( \frac{1-t}{t} S_\mu(t-1) \right)^{-\gamma} \D t
\end{align}
for $-1 < \gamma < 1$ and
\begin{align}
\int_0^\infty x^\gamma \D\nu(x) &= \int_0^1 S_\mu(t-1)^{-\gamma} \D t
\end{align}
for $\gamma \in \mathbb{R}$ as equalities of numbers in $[0,\infty]$.
\end{lemma}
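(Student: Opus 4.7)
The second identity is immediate from the defining property of $\nu$ in Theorem~\ref{WLLN_main_theorem}. Since $\mu(\{0\}) = 0$, Lemmas~\ref{WLLN_lemma_monotonicity_S} and~\ref{WLLN_S_image} imply that $t \mapsto 1/S_\mu(t-1)$ is a strictly increasing homeomorphism from $(0,1)$ onto $(a,b)$, and the relation $\nu([0, 1/S_\mu(t-1)]) = t$ says precisely that $\nu$ restricted to $(a,b)$ is the push-forward of Lebesgue measure on $(0,1)$ under this map. A change of variables $x = 1/S_\mu(t-1)$ therefore gives $\int_0^\infty x^\gamma \D\nu(x) = \int_0^1 S_\mu(t-1)^{-\gamma} \D t$ for every $\gamma \in \mathbb{R}$, as an equality in $[0,\infty]$. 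The degenerate case $\mu = \delta_a$ (whence $\nu = \delta_a$ and $S_\mu \equiv 1/a$) is direct.

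For the first identity the plan is to compute the auxiliary quantity $I := \int_0^\infty u^{-\gamma} \psi_\mu'(-u) \D u$ in two different ways, mirroring the strategy of Proposition~\ref{WLLN_proposition_formula_ln}. On the one hand, using \eqref{WLLN_eq_monotonicity_psi} and Tonelli's theorem,
\begin{align}
I = \int_0^\infty \int_0^\infty u^{-\gamma} \frac{x}{(1+ux)^2} \D u \, \D\mu(x),
\end{align}
and the inner integral, after the substitution $v = ux$, factors as $x^\gamma \cdot \int_0^\infty v^{-\gamma}(1+v)^{-2} \D v = x^\gamma B(1-\gamma, 1+\gamma)$. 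Euler's reflection formula gives $B(1-\gamma,1+\gamma) = \Gamma(1-\gamma)\Gamma(1+\gamma) = \gamma\Gamma(\gamma)\Gamma(1-\gamma) = \pi\gamma/\sin(\pi\gamma)$, valid for $-1 < \gamma < 1$ with $\gamma \neq 0$. Hence $I = (\pi\gamma/\sin(\pi\gamma)) \int_0^\infty x^\gamma \D\mu(x)$.

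On the other hand, the substitution $t = \psi_\mu(-u)+1$ is a strictly decreasing homeomorphism from $(0,\infty)$ onto $(0,1)$, and from Definition~\ref{WLLN_def_S_transform} we have $u = -\chi_\mu(t-1) = \frac{1-t}{t} S_\mu(t-1)$ with $\psi_\mu'(-u)\D u = -\D t$. Substituting rewrites
\begin{align}
I = \int_0^1 \left( \frac{1-t}{t} S_\mu(t-1) \right)^{-\gamma} \D t.
\end{align}
Equating the two expressions for $I$ and dividing by $\pi\gamma/\sin(\pi\gamma)$ yields the stated formula. The main technical point to verify is that every manipulation is legitimate as an equality in $[0,\infty]$: this is automatic since the integrands are non-negative (note $\sin(\pi\gamma)/(\pi\gamma) > 0$ on $-1 < \gamma < 1$) and the change-of-variable map is a monotone $C^1$ homeomorphism. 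The exceptional values $\gamma = 0$ and $\mu = \delta_a$ are handled separately by direct inspection, using that $\sin(\pi\gamma)/(\pi\gamma) \to 1$ and $B(1-\gamma,1+\gamma) \sin(\pi\gamma)/(\pi\gamma) = 1$ respectively.
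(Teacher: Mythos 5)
Your proof is correct and follows essentially the same route as the paper: both compute $\int_0^\infty u^{-\gamma}\psi_\mu'(-u)\,\mathrm{d}u$ in two ways (Tonelli plus the Beta integral $B(1-\gamma,1+\gamma)=\pi\gamma/\sin(\pi\gamma)$ on one hand, and the substitution $u=-\chi_\mu(t-1)=\frac{1-t}{t}S_\mu(t-1)$ on the other), and both obtain the $\nu$-identity from the substitution $x=S_\mu(t-1)^{-1}$ via the push-forward description of $\nu$ in Theorem \ref{WLLN_main_theorem}. Your explicit treatment of the cases $\gamma=0$ and $\mu=\delta_a$ is a small additional care the paper leaves implicit.
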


\begin{proof}
By Tonelli's theorem followed by the substitution $u = y x$ we get
\begin{align}
\int_0^\infty y^{-\gamma} \psi_\mu'(-y) \D y & = \int_0^\infty \int_0^\infty \frac{y^{-\gamma}x}{(1+yx)^2} \D t \D\mu(x) \\
& = \int_0^\infty x^\gamma \int_0^\infty \frac{u^{-\gamma}}{(1+u)^2} \D u \D\mu(x) \\
& = B(1-\gamma,1+\gamma) \int_0^\infty x^\gamma \D\mu(x),
\end{align}
where  $B(s,t) = \int_0^\infty \frac{u^{s-1}}{(1+u)^{s+t}} \D u$ is the Beta function. But $B(1-\gamma,1+\gamma) =  \frac{\sin(\pi\gamma)}{\pi\gamma}$ by well-known properties of $B$. Substitute now $x = -\chi_\mu(-z)$ and $z=1-t$ to get
\begin{align}
\int_0^\infty x^{-\gamma} \psi_\mu'(-x) \D x  & = \int_0^1 \left(- \chi_\mu(-z)\right)^{-\gamma} \D z = \int_{0}^1 \left( \frac{1-t}{t} S_\mu(t-1)\right)^{-\gamma} \D t,
\end{align}
which gives the first identity.
The second identity follows from the substitution $x = S_\mu(t-1)^{-1}$ and the properties of $\nu$ from Theorem \ref{WLLN_main_theorem}.
\qed
\end{proof}

\section{Examples}
\label{WLLN_section_examples}

In this section we will investigate a two parameter family of distributions for which there can be made explicit calculations.

\begin{proposition}
\label{WLLN_prop_mu_alpha_beta}
Let $\alpha, \beta \ge 0$. There exists a probability measure $\mu_{\alpha,\beta}$ on $(0,\infty)$ which $S$-transform is given by
\begin{align}
S_{\mu_{\alpha,\beta}}(z) = \frac{(-z)^\beta}{(1+z)^\alpha}.
\end{align}
Furthermore, these measures form a two-parameter semigroup, multiplicative under $\boxtimes$ induced by multiplication of $(\alpha,\beta) \in [0,\infty) \times [0,\infty)$.
\end{proposition}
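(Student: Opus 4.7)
My plan is to first establish existence of $\mu_{\alpha,\beta}$ by factoring $S_{\mu_{\alpha,\beta}}(z) = (1+z)^{-\alpha}(-z)^{\beta}$ and realising each factor separately via a one-parameter $\boxtimes$-semigroup, and then to deduce the two-parameter semigroup property as an immediate consequence of multiplicativity of the $S$-transform.

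For existence, the factor $(1+z)^{-\alpha}$ is the $S$-transform of the $\alpha$-th free multiplicative convolution power $\pi^{\boxtimes\alpha}$ of the standard Marchenko--Pastur distribution $\pi$, which has $S_\pi(z) = (1+z)^{-1}$. Since $\pi$ is $\boxtimes$-infinitely divisible, the family $(\pi^{\boxtimes\alpha})_{\alpha \geq 0}$ exists with $S_{\pi^{\boxtimes\alpha}}(z) = (1+z)^{-\alpha}$. For the factor $(-z)^\beta$, I would first construct the case $\beta = 1$ directly: from
\begin{align}
\chi_{\mu_{0,1}}(z) \;=\; \frac{z}{z+1}\cdot(-z) \;=\; -\frac{z^2}{1+z},
\end{align}
the quadratic formula yields the inverse $\psi_{\mu_{0,1}}(u) = \tfrac{1}{2}\bigl(-u - \sqrt{u^2-4u}\bigr)$ on $(-\infty, 0)$, picking the branch for which $\psi_{\mu_{0,1}}(u) \to 0^-$ as $u \to 0^-$ and $\psi_{\mu_{0,1}}(u) \to -1^+$ as $u \to -\infty$. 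A probability measure $\mu_{0,1}$ on $(0,\infty)$ is then recovered from $G_{\mu_{0,1}}(z) = z^{-1}\bigl(1+\psi_{\mu_{0,1}}(1/z)\bigr)$ by Stieltjes inversion. Setting $\mu_{0,\beta} := \mu_{0,1}^{\boxtimes\beta}$ via the $\boxtimes$-convolution semigroup and $\mu_{\alpha,\beta} := \pi^{\boxtimes\alpha}\boxtimes\mu_{0,1}^{\boxtimes\beta}$, multiplicativity of the $S$-transform (Lemma~\ref{WLLN_lem_Smunu_eq_Smu_Snu}) delivers the prescribed formula.

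The semigroup property is then immediate: for any $(\alpha_i,\beta_i) \in [0,\infty)^2$, Lemma~\ref{WLLN_lem_Smunu_eq_Smu_Snu} gives
\begin{align}
S_{\mu_{\alpha_1,\beta_1}\boxtimes\mu_{\alpha_2,\beta_2}}(z) \;=\; \frac{(-z)^{\beta_1+\beta_2}}{(1+z)^{\alpha_1+\alpha_2}} \;=\; S_{\mu_{\alpha_1+\alpha_2,\beta_1+\beta_2}}(z),
\end{align}
and injectivity of the $S$-transform (\cite[Theorem 2.6]{Voiculescu1987}, extended in \cite{BercoviciVoiculescu1993}) forces $\mu_{\alpha_1,\beta_1}\boxtimes\mu_{\alpha_2,\beta_2} = \mu_{\alpha_1+\alpha_2,\beta_1+\beta_2}$. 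Thus the parameter $(\alpha,\beta)$ acts additively on $[0,\infty)^2$, which is the semigroup structure asserted by the proposition.

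The main obstacle I expect is justifying the fractional powers $\mu_{0,1}^{\boxtimes\beta}$ for $0 < \beta < 1$. Unlike the classical Marchenko--Pastur semigroup, the measure $\mu_{0,1}$ with $S$-transform $-z$ is not a standard object, and its $\boxtimes$-infinite divisibility must be verified separately. The cleanest route is to exhibit $\Sigma_{\mu_{0,1}}(z) := \log S_{\mu_{0,1}}(z) = \log(-z)$ in the Bercovici--Voiculescu integral representation for $\boxtimes$-infinitely divisible measures on $(0,\infty)$, which then produces the whole family $(\mu_{0,\beta})_{\beta \geq 0}$ simultaneously and pairs naturally with the Marchenko--Pastur semigroup to yield the full two-parameter family.
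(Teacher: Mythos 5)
Your overall strategy (factor the $S$-transform, realise each factor by a one-parameter $\boxtimes$-semigroup, multiply) can be made to work, but as written the argument has a genuine gap at exactly the point the proposition is about: existence for \emph{fractional} parameters. For integers $m,n$ the measure $\mu_{1,0}^{\boxtimes m}\boxtimes\mu_{0,1}^{\boxtimes n}$ is unproblematic, but for non-integer $\alpha,\beta$ you need (i) that the free Poisson law $\pi$ is $\boxtimes$-infinitely divisible, which you assert without proof, and (ii) that $\mu_{0,1}$ is $\boxtimes$-infinitely divisible, which you explicitly defer (``must be verified separately'', ``the cleanest route is to exhibit $\log(-z)$ in the Bercovici--Voiculescu integral representation''). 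Neither verification is carried out, so the existence of $\mu_{\alpha,\beta}$ for general $\alpha,\beta\ge0$ is never actually established. There is a second, smaller gap in your construction of $\mu_{0,1}$: solving $\chi_{\mu_{0,1}}(z)=-z^2/(1+z)$ for $\psi_{\mu_{0,1}}$ and writing $G_{\mu_{0,1}}(z)=z^{-1}\left(1+\psi_{\mu_{0,1}}(1/z)\right)$ does not by itself produce a probability measure; one must check that this $G$ is the Cauchy transform of a positive measure (e.g.\ that it maps $\mathbb{C}^+$ into $\mathbb{C}^-$ with $zG(z)\to1$), or sidestep the issue by noting that $-z$ is the $S$-transform of the image of $\pi$ under $t\mapsto t^{-1}$, as in Lemma \ref{WLLN_lem_ab_ba}.

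The verifications you leave out are precisely the content of the paper's proof, and once you invoke \cite[Theorem 6.13 (ii)]{BercoviciVoiculescu1993} at all, the factorization becomes unnecessary: the paper applies that theorem directly to $v_{\alpha,\beta}(z)=\beta\ln(-z)-\alpha\ln(1+z)$, checking by inspection of arguments that $\ln(-z)$ and $-\ln(1+z)$ both map $\mathbb{C}^+$ into $\mathbb{C}^-$ and that $v_{\alpha,\beta}(\bar z)=\overline{v_{\alpha,\beta}(z)}$, which yields a unique $\boxtimes$-infinitely divisible measure with $S$-transform $\exp(v_{\alpha,\beta}(z))=(-z)^\beta(1+z)^{-\alpha}$ for all $\alpha,\beta\ge0$ simultaneously. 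That two-line computation, applied to each of your two factors, is exactly what your steps (i) and (ii) require; adding it closes your proof and makes it a factored version of the paper's argument. Your derivation of the semigroup law from multiplicativity (Lemma \ref{WLLN_lem_Smunu_eq_Smu_Snu}) and injectivity of the $S$-transform is correct and agrees with the paper.
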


\begin{proof}
Note first that $\alpha=\beta=0$ gives $S_{\mu_{0,0}}=1$, which by uniqueness of the $S$-transform results in $\mu_{0,0}=\delta_1$, hence we can in the following assume $(\alpha,\beta) \neq (0,0)$.

Define the function $v_{\alpha,\beta} \colon \mathbb{C} \setminus [0,1] \to \mathbb{C}$ by
\begin{align}
v_{\alpha,\beta}(z) = \beta \ln(-z) - \alpha \ln(1+z)
\end{align}
for all $z \in \mathbb{C} \setminus [0,1]$.

In the following we for $z \in \mathbb{C}$ denote by $\arg{z} \in [-\pi,\pi]$ its argument.
Assume $z=x+\I y$ and $y > 0$ then
\begin{align}
\ln(-z) = \frac{1}{2}\ln\left(x^2+y^2\right) + \I \arg(-x-\I y)
\end{align}
where $\arg(-x-\I y) < 0 $, which implies that $\ln(\mathbb{C}^+) \subseteq \mathbb{C}^-$.
Similarly, if we assume $z=x+\I y$ and $y > 0$ then
\begin{align}
\ln(1+z) = \frac{1}{2}\ln\left((x+1)^2+y^2\right) + \I \arg((x+1)+\I y)
\end{align}
where $\arg((x+1)+\I y) > 0 $, which implies that $-\ln(1+\mathbb{C}^+) \subseteq \mathbb{C}^-$ and hence $v_{\alpha,\beta}(\mathbb{C}^+) \subseteq \mathbb{C}^-$.
Furthermore, we observe that for all $z \in \mathbb{C}$, $v_{\alpha,\beta}(\bar{z}) = \overline{v_{\alpha,\beta}(z)}$.
By \cite[Theorem 6.13 (ii)]{BercoviciVoiculescu1993} these results imply that there exists a unique $\boxtimes$-infinitely divisible measure $\mu_{\alpha,\beta}$ with the $S$-transform
\begin{align}
S_{\mu_{\alpha,\beta}}(z) = \exp(v(z)) = \exp(\beta \ln(-z) - \alpha \ln(1+z)) = \frac{(-z)^\beta}{(1+z)^\alpha}.
\end{align}

The semigroup property follows from multiplicativity of the $S$-transform.
\qed
\end{proof}

\index{free Bessel law}
\index{Boolean stable law}
The existence of $\mu_{\alpha,0}$ was previously proven by T. Banica, S. T. Belinschi, M. Capitaine and B. Collins in \cite{BanicaBelinschiCapitaineCollins2011} as a special case of free Bessel laws. The case $\mu_{\alpha,\alpha}$ is known as a Boolean stable law from O. Arizmendi and T. Hasebe \cite{ArizmendiHasebe2012p}.

Furthermore, there is a clear relationship between the measures $\mu_{\alpha,\beta}$ and $\mu_{\beta,\alpha}$.
\begin{lemma}
\label{WLLN_lem_ab_ba}
Let $\alpha, \beta \ge 0$, $(\alpha,\beta)\neq(0,0)$ and let $\zeta \colon (0,\infty) \to (0,\infty)$ be the map $\zeta(t)=t^{-1}$. Then we have
$\mu_{\beta,\alpha} = \dot{\zeta}(\mu_{\alpha,\beta})$,
where $\dot{\zeta}$ denotes the image measure under the map $\zeta$.
\end{lemma}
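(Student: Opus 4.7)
The plan is to verify the identity at the level of $S$-transforms and invoke uniqueness. By the uniqueness result cited in the remark following Theorem~\ref{WLLN_main_theorem}, a probability measure on $(0,\infty)$ is determined by its $S$-transform, so it suffices to show that $S_{\dot\zeta(\mu_{\alpha,\beta})} = S_{\mu_{\beta,\alpha}}$ on $(-1,0)$. The degenerate case $\alpha=\beta=0$ is immediate since $\mu_{0,0}=\delta_1$ is fixed by $\zeta$, so we may assume $(\alpha,\beta)\neq(0,0)$.

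I would first establish a general reciprocity formula: for any probability measure $\mu$ on $(0,\infty)$, writing $\tilde\mu := \dot\zeta(\mu)$, one has
\begin{equation*}
S_{\tilde\mu}(w) \;=\; \frac{1}{S_\mu(-1-w)}, \qquad w \in (-1,0).
\end{equation*}
The core computation is carried out at the level of $\psi$. For $u<0$, the change of variables $s = 1/t$ inside the integral gives
\begin{equation*}
\psi_{\tilde\mu}(u) \;=\; \int_0^\infty \frac{u/t}{1-u/t}\,d\mu(t) \;=\; \int_0^\infty \frac{u}{t-u}\,d\mu(t),
\end{equation*}
and the substitution $u=1/v$ with $v<0$, combined with the algebraic identity $-\tfrac{1}{1-tv} = -1 - \tfrac{tv}{1-tv}$, yields
\begin{equation*}
\psi_{\tilde\mu}(1/v) \;=\; -1 - \psi_\mu(v).
\end{equation*}
Setting $z=\psi_\mu(v)$ this rewrites as $\chi_{\tilde\mu}(-1-z) = 1/\chi_\mu(z)$, and inserting the definition $\chi_\mu(z)=\tfrac{z}{z+1}S_\mu(z)$ on both sides followed by a short simplification gives the displayed reciprocity formula.

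Applying the formula to $\mu_{\alpha,\beta}$ finishes the proof: since $S_{\mu_{\alpha,\beta}}(-1-w) = (1+w)^\beta (-w)^{-\alpha}$ for $w\in(-1,0)$, one obtains
\begin{equation*}
S_{\dot\zeta(\mu_{\alpha,\beta})}(w) \;=\; \frac{1}{S_{\mu_{\alpha,\beta}}(-1-w)} \;=\; \frac{(-w)^\alpha}{(1+w)^\beta} \;=\; S_{\mu_{\beta,\alpha}}(w),
\end{equation*}
and uniqueness of the $S$-transform yields $\dot\zeta(\mu_{\alpha,\beta})=\mu_{\beta,\alpha}$.

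The only genuine obstacle is the derivation of the reciprocity $S_{\tilde\mu}(w)=1/S_\mu(-1-w)$; the rest is bookkeeping. One should also briefly check that $\tilde\mu$ has no atom at $0$, so that $S_{\tilde\mu}$ is defined on all of $(-1,0)$: this is equivalent to $\mu_{\alpha,\beta}$ having no mass at $\infty$, which is automatic for a probability measure on $(0,\infty)$.
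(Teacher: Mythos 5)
Your proof is correct and follows essentially the same route as the paper: the paper likewise reduces the lemma to the reciprocity formula $S_{\dot\zeta(\mu)}(z) = 1/S_{\mu}(-1-z)$ and concludes by uniqueness of the $S$-transform, the only difference being that the paper cites this formula from the proof of \cite[Proposition 3.13]{HaagerupSchultz2007} whereas you derive it from scratch. Your derivation via $\psi_{\dot\zeta(\mu)}(1/v) = -1 - \psi_{\mu}(v)$ and the resulting identity $\chi_{\dot\zeta(\mu)}(-1-z) = 1/\chi_{\mu}(z)$ is valid, so your argument is in fact self-contained where the paper's is not.
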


\begin{proof}
Put $\sigma = \dot{\zeta}(\mu_{\alpha,\beta})$. Then by the proof of \cite[Proposition 3.13]{HaagerupSchultz2007},
\begin{align}
S_{\sigma}(z) = \frac{1}{S_{\mu_{\alpha,\beta}}(-1-z)} = \frac{(-z)^\alpha}{(1+z)^\beta} = S_{\mu_{\beta,\alpha}}
\end{align}
for $0<z<1$. Hence $\sigma = \mu_{\beta,\alpha}$.
\qed
\end{proof}

\begin{lemma}
Let $(\alpha,\beta) \neq (0,0)$. Denote the limit measure corresponding to $\mu_{\alpha,\beta}$ by $\nu_{\alpha,\beta}$. Then $\nu_{\alpha,\beta}$ is uniquely determined by the formula
\begin{align}
F_{\alpha,\beta}\left(\frac{t^\alpha}{(1-t)^\beta}\right) &= t
\end{align}
for $0<t<1$, where $F_{\alpha,\beta}(x) = \nu_{\alpha,\beta}((0,x])$ is the distribution function of $\nu_{\alpha,\beta}$.
\end{lemma}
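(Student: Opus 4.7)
The plan is to invoke Theorem \ref{WLLN_main_theorem} directly with $\mu = \mu_{\alpha,\beta}$ and then simplify. Since $\mu_{\alpha,\beta}$ is a probability measure on $(0,\infty)$, we have $\delta = \mu_{\alpha,\beta}(\{0\}) = 0$, so Theorem \ref{WLLN_main_theorem} guarantees $\nu_{\alpha,\beta}(\{0\}) = 0$ together with
\begin{align}
\nu_{\alpha,\beta}\left(\left[0, \frac{1}{S_{\mu_{\alpha,\beta}}(t-1)}\right]\right) = t \qquad \text{for all } t \in (0,1).
\end{align}

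Substituting the explicit $S$-transform from Proposition \ref{WLLN_prop_mu_alpha_beta} gives $S_{\mu_{\alpha,\beta}}(t-1) = (1-t)^\beta / t^\alpha$, whence $1/S_{\mu_{\alpha,\beta}}(t-1) = t^\alpha/(1-t)^\beta$. Because $\nu_{\alpha,\beta}(\{0\}) = 0$, the distribution function $F_{\alpha,\beta}(x) = \nu_{\alpha,\beta}((0,x])$ coincides with $\nu_{\alpha,\beta}([0,x])$ for every $x \ge 0$, and the desired identity $F_{\alpha,\beta}(t^\alpha/(1-t)^\beta) = t$ drops out.

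To upgrade this formula to a characterisation of $\nu_{\alpha,\beta}$, I observe that $\varphi(t) = t^\alpha/(1-t)^\beta$ is continuous and strictly increasing on $(0,1)$: the hypothesis $(\alpha,\beta) \ne (0,0)$ ensures that at least one of the factors $t^\alpha$ and $(1-t)^{-\beta}$ is strictly increasing, while neither is decreasing. By Lemma \ref{WLLN_S_image}, the image $\varphi((0,1))$ coincides with the interior $(a,b)$ of the support of $\nu_{\alpha,\beta}$. Hence the formula determines $F_{\alpha,\beta}$ on a dense subset of its support, and by right-continuity of any distribution function this pins down $\nu_{\alpha,\beta}$ uniquely.

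There is no genuine obstacle here: all the substantive analytic work has already been packaged into Theorem \ref{WLLN_main_theorem} and Proposition \ref{WLLN_prop_mu_alpha_beta}. The only small checks are the vanishing $\nu_{\alpha,\beta}(\{0\}) = 0$, which is immediate from $\delta = 0$, and the strict monotonicity of $\varphi$, which is routine.
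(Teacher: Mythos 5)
Your proof is correct and takes essentially the same route as the paper, whose entire proof reads that the lemma ``follows directly'' from Proposition \ref{WLLN_prop_mu_alpha_beta} and Theorem \ref{WLLN_main_theorem} --- precisely the substitution $S_{\mu_{\alpha,\beta}}(t-1) = (1-t)^\beta/t^\alpha$, hence $1/S_{\mu_{\alpha,\beta}}(t-1) = t^\alpha/(1-t)^\beta$, that you carry out. The additional checks you supply (the atom at $0$, strict monotonicity of $t \mapsto t^\alpha/(1-t)^\beta$, and uniqueness via right-continuity of the distribution function) are sound elaborations of details the paper leaves implicit.
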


\begin{proof}
The lemma follows directly from Lemma \ref{WLLN_prop_mu_alpha_beta} and Theorem \ref{WLLN_main_theorem}.
\qed
\end{proof}

For $\beta=0$ and $\alpha>0$,
\begin{align}
F_{\alpha,0}(x) = \left\{ \begin{array}{ll} x^{\frac{1}{\alpha}}, & \qquad 0 < x < 1 \\ 1, & \qquad x \ge 1. \end{array} \right.
\end{align}
Similarly for $\alpha=0$ and $\beta>0$
\begin{align}
F_{0,\beta}(x) = \left\{ \begin{array}{ll} 0, & \qquad 0 < x < 1 \\ (1-x)^{-\frac{1}{\beta}}, & \qquad x \ge 1. \end{array} \right.
\end{align}
\index{Pareto distribution}
Hence $\nu_{0,\beta}$ is the Pareto distribution with scale parameter $1$ and shape parameter $\frac{1}{\beta}$.

\index{Burr distribution}
Moreover, if $\alpha=\beta>0$ we get
$F_{\alpha,\alpha}(x) = (1+x^{-1/\alpha})^{-1}$
for $x \in (0,\infty)$, which we recognize as the image measure of the Burr distribution with parameters $(1,\alpha^{-1})$ (or equivalently the Fisk or log-logistic distribution (cf. \cite[p. 54]{JohnsonKotzBalakrishnan1994}) with scale parameter $1$ and shape parameter $\alpha^{-1}$) under the map $x \mapsto x^{-1}$.

\index{free Poisson distribution}
On the other hand, we can make some observations about the distribution $\mu_{\alpha,\beta}$, too. For the cases $(\alpha,\beta)=(1,0)$ and $(\alpha,\beta)=(0,1)$ we can regognize the measures $\mu_{1,0}$ and $\mu_{0,1}$ from there $S$-transform, as
$S_{\mu_{1,0}}(z) = (1+z)^{-1}$
is the $S$-transform of the free Poisson distributions with shape parameter $1$ (cf. \cite[p. 34]{VoiculescuDykemaNica1992}), which is given by
\begin{align}
\mu_{1,0} = \frac{1}{2\pi} \sqrt{\frac{4-x}{x}} 1_{(0,4)}(x) \D x,
\end{align}
while $S_{\mu_{0,1}}(z) = -z$ according to Lemma \ref{WLLN_lem_ab_ba} is the $S$-transform of the image of the above free Poisson distribution under the map $t \mapsto t^{-1}$,
\begin{align}
\mu_{0,1} = \frac{1}{2\pi} \frac{\sqrt{4x-1}}{x^2} 1_{(\frac{1}{4},\infty)}(x) \D x,
\end{align}
\index{free stable distribution}
which is the same as the free stable distribution with parameters $\alpha=1/2$ and $\rho=1$ as described by H. Bercovici, V. Pata and P. Biane in \cite[Appendix A1]{BercoviciPataBiane1999}. More genereally, $\mu_{0,\beta}$ is the same as the free stable distribution $v_{\alpha,\rho}$ with $\alpha = \frac{1}{\beta+1}$ and $\rho=1$, because by \cite[Appendix A4]{BercoviciPataBiane1999} $v_{\alpha,1}$ is characterized by $\Sigma_{v_{\alpha,1}}(y) = \left(\frac{-y}{1-y}\right)^{\frac{1}{\alpha}-1}$, $y \in (-\infty,0)$, and it is easy to check that
\begin{align}
S_{v_{\alpha,0}}(z) =\Sigma_{v_{\alpha,0}}\left(\frac{z}{1+z}\right) = (-z)^{\frac{1}{\alpha}-1} = S_{\mu_{0,\frac{1}{\alpha}-1}}(z),\quad 0 < z < 1, 0 < \alpha < 1.
\end{align}

From the above observations, we now can describe a construction of the measures $\mu_{m,n}$.
\begin{proposition}
Let $m, n$ be nonnegative integers. Then the measure $\mu_{m,n}$ is given by
\begin{align}
\mu_{m,n} = \mu_{1,0}^{\boxtimes m} \boxtimes \mu_{0,1}^{\boxtimes n}.
\end{align}
\end{proposition}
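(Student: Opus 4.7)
The plan is to read off the $S$-transforms of $\mu_{1,0}$ and $\mu_{0,1}$ from Proposition \ref{WLLN_prop_mu_alpha_beta} (or equivalently from the free-Poisson and Lemma \ref{WLLN_lem_ab_ba} identifications given just above the statement), then appeal to the multiplicativity Lemma \ref{WLLN_lem_Smunu_eq_Smu_Snu} and uniqueness of $S$ to conclude.

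Concretely, the first step is to note that $S_{\mu_{1,0}}(z) = (1+z)^{-1}$ and $S_{\mu_{0,1}}(z) = -z$ on $(-1,0)$, both of which are already recorded in the text preceding the proposition. The second step is to compute, using Lemma \ref{WLLN_lem_Smunu_eq_Smu_Snu} iteratively (and the fact that none of the factors is $\delta_0$, so the lemma applies),
\begin{align}
S_{\mu_{1,0}^{\boxtimes m} \boxtimes \mu_{0,1}^{\boxtimes n}}(z) = S_{\mu_{1,0}}(z)^{m} \, S_{\mu_{0,1}}(z)^{n} = \frac{(-z)^{n}}{(1+z)^{m}} = S_{\mu_{m,n}}(z)
\end{align}
for $z \in (-1,0)$. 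The third step is to invoke uniqueness of the $S$-transform (\cite[Theorem 2.6]{Voiculescu1987}, cited earlier in the paper) to conclude $\mu_{1,0}^{\boxtimes m} \boxtimes \mu_{0,1}^{\boxtimes n} = \mu_{m,n}$.

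There is essentially no obstacle here; the only mild subtlety is making sure the hypothesis $\delta = 0$ needed for Lemma \ref{WLLN_lem_Smunu_eq_Smu_Snu} holds, which is guaranteed since $\mu_{\alpha,\beta}$ is a probability measure on $(0,\infty)$ by Proposition \ref{WLLN_prop_mu_alpha_beta}, so in particular none of the factors puts mass at $0$ and the same holds for their iterated free multiplicative convolutions.
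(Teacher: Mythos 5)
Your proposal is correct and coincides with the paper's own proof: both apply multiplicativity of the $S$-transform to get $S_{\mu_{1,0}}(z)^m S_{\mu_{0,1}}(z)^n = (-z)^n(1+z)^{-m} = S_{\mu_{m,n}}(z)$ and then conclude by uniqueness of the $S$-transform. Your extra remark verifying that no factor is $\delta_0$ (so the multiplicativity lemma applies) is a small point of care the paper leaves implicit, but it does not change the argument.
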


\begin{proof}
By multiplicativity of the $S$-transform we have that
\begin{align}
S_{\mu_{1,0}^{\boxtimes m} \boxtimes \mu_{0,1}^{\boxtimes n}}(z) = S_{\mu_{1,0}}(z)^m S_{\mu_{0,1}}(z)^n = \frac{(-z)^n}{(1+z)^m} = S_{\mu_{m,n}}(z),
\end{align}
which by uniqueness of the $S$-transform gives the desired result.
\qed
\end{proof}

\begin{proposition}
For all $\alpha,\beta \ge 0$.
\begin{align}
\mathbb{E}_{\mu_{\alpha,\beta}}(\ln x) & = \beta - \alpha \\
\rho(\mu_{\alpha,\beta}) & = \frac{\pi^2}{6}(\alpha+\beta) \\
\mathbb{V}_{\mu_{\alpha,\beta}}(\ln x) & = (\alpha-\beta)^2 + \frac{\pi^2}{3}(\alpha\beta+\alpha+\beta)
\end{align}
\end{proposition}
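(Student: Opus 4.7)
The plan is to reduce all three formulas to integrals of the form $\int_0^1 f(t)[\beta\ln(1-t)-\alpha\ln t]^k\D t$ for $k\in\{1,2\}$ and then invoke Lemma \ref{WLLN_lem_ln_identities}. The key preliminary observation is that, writing $z=t-1$, the $S$-transform evaluates to
\begin{align}
\ln S_{\mu_{\alpha,\beta}}(t-1) &= \beta\ln(1-t) - \alpha\ln t, \qquad 0<t<1.
\end{align}
Since $\ln^2 S_{\mu_{\alpha,\beta}}(t-1)$ and $\ln(\tfrac{1-t}{t})\ln S_{\mu_{\alpha,\beta}}(t-1)$ are (absolutely) integrable on $(0,1)$ by Lemma \ref{WLLN_lem_ln_identities}, every $S$-transform identity below applies as equalities of real numbers (not merely in $[0,\infty]$).

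\textbf{The mean.} Apply Proposition \ref{WLLN_proposition_formula_ln} to obtain
\begin{align}
\mathbb{E}_{\mu_{\alpha,\beta}}(\ln x) = -\int_0^1\bigl(\beta\ln(1-t)-\alpha\ln t\bigr)\D t.
\end{align}
Using $\int_0^1\ln t\,\D t=\int_0^1\ln(1-t)\,\D t=-1$ yields $\beta-\alpha$.

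\textbf{The quantity $\rho$.} Directly from the definition,
\begin{align}
\rho(\mu_{\alpha,\beta}) = \int_0^1\bigl(\ln(1-t)-\ln t\bigr)\bigl(\beta\ln(1-t)-\alpha\ln t\bigr)\D t.
\end{align}
Expanding gives four integrals, each evaluated by Lemma \ref{WLLN_lem_ln_identities}: the coefficients of $\beta$ and $\alpha$ are $\int_0^1\ln^2(1-t)\D t=2$ and $\int_0^1\ln^2 t\,\D t=2$ respectively, while the cross terms contribute $-(\alpha+\beta)\int_0^1\ln t\,\ln(1-t)\,\D t=-(\alpha+\beta)(2-\pi^2/6)$. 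Summing gives $2(\alpha+\beta)-(\alpha+\beta)(2-\pi^2/6)=\tfrac{\pi^2}{6}(\alpha+\beta)$.

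\textbf{The variance.} By Proposition \ref{WLLN_proposition_formula_ln2}, $\mathbb{V}_{\mu_{\alpha,\beta}}(\ln x)=\mathbb{V}_{\nu_{\alpha,\beta}}(\ln x)+2\rho(\mu_{\alpha,\beta})$, so it suffices to compute $\mathbb{V}_{\nu_{\alpha,\beta}}(\ln x)$. Since $\mathbb{E}_{\nu_{\alpha,\beta}}(\ln x)=\mathbb{E}_{\mu_{\alpha,\beta}}(\ln x)=\beta-\alpha$ by Proposition \ref{WLLN_proposition_formula_ln}, and
\begin{align}
\int_0^\infty\ln^2 x\,\D\nu_{\alpha,\beta}(x) = \int_0^1\bigl(\beta\ln(1-t)-\alpha\ln t\bigr)^2\D t = 2(\alpha-\beta)^2+\tfrac{\pi^2}{3}\alpha\beta
\end{align}
by the same three integrals from Lemma \ref{WLLN_lem_ln_identities}, one gets
\begin{align}
\mathbb{V}_{\nu_{\alpha,\beta}}(\ln x) = 2(\alpha-\beta)^2+\tfrac{\pi^2}{3}\alpha\beta-(\alpha-\beta)^2 = (\alpha-\beta)^2+\tfrac{\pi^2}{3}\alpha\beta.
\end{align}
Adding $2\rho(\mu_{\alpha,\beta})=\tfrac{\pi^2}{3}(\alpha+\beta)$ produces the claimed $(\alpha-\beta)^2+\tfrac{\pi^2}{3}(\alpha\beta+\alpha+\beta)$.

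There is no real obstacle here: the main work is bookkeeping, especially tracking signs in the expansion for $\rho$ and ensuring the variance computation uses the right mean. The only non-mechanical ingredient is the already-proved Lemma \ref{WLLN_lem_ln_identities}, which supplies all four needed definite integrals.
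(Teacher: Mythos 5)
Your proof is correct and follows exactly the route the paper intends: the paper's proof is the one-line remark that the formulas ``follow easily from Propositions \ref{WLLN_proposition_formula_ln} and \ref{WLLN_proposition_formula_ln2} and Lemma \ref{WLLN_lem_ln_identities},'' and your computation, based on $\ln S_{\mu_{\alpha,\beta}}(t-1)=\beta\ln(1-t)-\alpha\ln t$ together with those same three results, is precisely the bookkeeping the authors left to the reader. All sign conventions and the four definite integrals check out, so there is nothing to correct.
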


\begin{proof}
These formulas follow easily from Propositions \ref{WLLN_proposition_formula_ln} and \ref{WLLN_proposition_formula_ln2} and Lemma \ref{WLLN_lem_ln_identities}.
\end{proof}

Furthermore, we also can calculate explicitely all fractional moments of $\mu_{\alpha,\beta}$ by the following theorem.

\begin{theorem}
\label{WLLN_thm_moments_mu_ab}
Let $\alpha, \beta > 0$ and $\gamma \in \mathbb{R}$ then we have
\begin{align}
\label{WLLN_moments_mu_ab1}
\int_0^\infty x^\gamma \D \mu_{\alpha,\beta}(x) & = \left\{ \begin{array}{ll} \frac{\sin(\pi\gamma)}{\pi\gamma}\frac{\Gamma(1+\gamma+\gamma\alpha)\Gamma(1-\gamma-\gamma\beta)}{\Gamma(2+\gamma\alpha-\gamma\beta)} & \quad -\frac{1}{1+\alpha} < \gamma < \frac{1}{1+\beta} \\ \infty & \quad \text{otherwise} \end{array} \right. \\
\label{WLLN_moments_mu_ab2}
\int_0^\infty x^\gamma \D \mu_{\alpha,0}(x) & = \left\{ \begin{array}{ll} \frac{\Gamma(1+\gamma+\gamma\alpha)}{\Gamma(1+\gamma)\Gamma(2+\gamma\alpha)} & \quad \gamma > -\frac{1}{1+\alpha} \\ \infty & \quad \text{otherwise} \end{array} \right. \\
\label{WLLN_moments_mu_ab3}
\int_0^\infty x^\gamma \D \mu_{0,\beta}(x) & = \left\{ \begin{array}{ll} \frac{\Gamma(1-\gamma-\gamma\beta)}{\Gamma(1-\gamma)\Gamma(2-\gamma\beta)} & \quad \gamma < \frac{1}{1+\beta} \\ \infty & \quad \text{otherwise.} \end{array} \right. \\
\end{align}
\end{theorem}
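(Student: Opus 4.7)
The plan is to apply Lemma \ref{WLLN_lemma_formula_gamma} with $S_{\mu_{\alpha,\beta}}(t-1)=(1-t)^{\beta}/t^{\alpha}$, so that $\frac{1-t}{t}S_{\mu_{\alpha,\beta}}(t-1)=(1-t)^{1+\beta}/t^{1+\alpha}$ and the lemma reduces the moment $\int_0^\infty x^{\gamma}\D\mu_{\alpha,\beta}(x)$ to $\frac{\sin(\pi\gamma)}{\pi\gamma}B(1+\gamma+\gamma\alpha,\,1-\gamma-\gamma\beta)$ for $-1<\gamma<1$. The Beta integral converges exactly on $(-\tfrac{1}{1+\alpha},\tfrac{1}{1+\beta})$ and evaluates to the Gamma expression on the right of \eqref{WLLN_moments_mu_ab1}.

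When $\alpha,\beta>0$, the convergence range $(-\tfrac{1}{1+\alpha},\tfrac{1}{1+\beta})$ lies inside $(-1,1)$, so Lemma \ref{WLLN_lemma_formula_gamma} applies throughout and directly yields \eqref{WLLN_moments_mu_ab1}. Outside this range but still inside $(-1,1)$, the Beta integral is infinite, so (since the lemma is an equality in $[0,\infty]$) the moment is infinite. For $|\gamma|\ge 1$ I read off from Lemma \ref{WLLN_S_image} together with the boundary behavior of $S_{\mu_{\alpha,\beta}}$ that $\int x\,\D\mu_{\alpha,\beta}=\infty$ (because $S_{\mu_{\alpha,\beta}}(z)\to 0$ as $z\to 0^{-}$ when $\beta>0$) and $\int x^{-1}\,\D\mu_{\alpha,\beta}=\infty$ (because $S_{\mu_{\alpha,\beta}}(z)\to\infty$ as $z\to -1^{+}$ when $\alpha>0$); the elementary bound $\int x^{\gamma}\,\D\mu\ge \int_{\{x\ge 1\}}x\,\D\mu\ge \int x\,\D\mu-1$ (and its reciprocal counterpart for $\gamma\le-1$) then forces $\int x^{\gamma}\,\D\mu_{\alpha,\beta}=\infty$ for $|\gamma|\ge 1$.

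The hard part is case \eqref{WLLN_moments_mu_ab2}: Lemma \ref{WLLN_lemma_formula_gamma} only covers $\gamma\in(-\tfrac{1}{1+\alpha},1)$, and on this interval the reflection identity $\sin(\pi\gamma)\Gamma(1-\gamma)=\pi\gamma/\Gamma(1+\gamma)$ immediately collapses the generic answer to the stated $\Gamma(1+\gamma+\gamma\alpha)/[\Gamma(1+\gamma)\Gamma(2+\gamma\alpha)]$. To extend past $\gamma=1$, my plan is to compute the integer moments by Lagrange inversion applied to $\chi_{\mu_{\alpha,0}}(z)=z(1+z)^{-(1+\alpha)}$, obtaining $m_n=\tfrac{1}{n}\binom{n(1+\alpha)}{n-1}=\Gamma(1+n+n\alpha)/[\Gamma(1+n)\Gamma(2+n\alpha)]$. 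A Stirling estimate then shows $m_n^{1/n}$ stays bounded, so $\mu_{\alpha,0}$ has compact support; in particular $\gamma\mapsto\int x^{\gamma}\,\D\mu_{\alpha,0}$ is real-analytic on the half-line $\gamma>-\tfrac{1}{1+\alpha}$, as is the closed-form expression in \eqref{WLLN_moments_mu_ab2}. Since the two coincide on $(-\tfrac{1}{1+\alpha},1)$, analytic continuation extends the equality to the entire half-line. Divergence for $\gamma\le-\tfrac{1}{1+\alpha}$ is handled exactly as in the previous case, using Lemma \ref{WLLN_lemma_formula_gamma} when $-1<\gamma\le -\tfrac{1}{1+\alpha}$ and the $\int x^{-1}\,\D\mu_{\alpha,0}=\infty$ argument when $\gamma\le -1$.

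Finally, \eqref{WLLN_moments_mu_ab3} reduces to \eqref{WLLN_moments_mu_ab2} via Lemma \ref{WLLN_lem_ab_ba}: writing $\mu_{0,\beta}=\dot{\zeta}(\mu_{\beta,0})$ with $\zeta(x)=x^{-1}$ gives $\int x^{\gamma}\,\D\mu_{0,\beta}=\int x^{-\gamma}\,\D\mu_{\beta,0}$, and substituting $(\alpha,\gamma)\mapsto(\beta,-\gamma)$ in \eqref{WLLN_moments_mu_ab2} produces the stated formula.
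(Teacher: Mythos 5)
Your proof is correct, and its skeleton coincides with the paper's: the range $-1<\gamma<1$ is handled by inserting $S_{\mu_{\alpha,\beta}}(t-1)=(1-t)^\beta/t^\alpha$ into Lemma \ref{WLLN_lemma_formula_gamma}, which produces the Beta integral $B(1+\gamma(1+\alpha),\,1-\gamma(1+\beta))$, the reflection formula $\Gamma(1+\gamma)\Gamma(1-\gamma)=\pi\gamma/\sin(\pi\gamma)$ collapses the generic expression to the form in \eqref{WLLN_moments_mu_ab2}, and \eqref{WLLN_moments_mu_ab3} follows from \eqref{WLLN_moments_mu_ab2} via Lemma \ref{WLLN_lem_ab_ba}, exactly as in the paper. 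The genuine difference is how you push \eqref{WLLN_moments_mu_ab2} past $\gamma=1$. The paper argues abstractly: since $S_{\mu_{\alpha,0}}(z)=(1+z)^{-\alpha}$ is analytic near $0$, the measure $\mu_{\alpha,0}$ has finite moments of all orders, so $s\mapsto\int_0^\infty x^s\D\mu_{\alpha,0}(x)$ and the closed-form expression are both analytic on the half-plane $\Re s>0$ and agree on $(0,1)$, hence everywhere there. You instead compute the integer moments explicitly by Lagrange inversion of $\chi_{\mu_{\alpha,0}}(z)=z(1+z)^{-(1+\alpha)}$, obtaining the Fuss--Catalan numbers $\frac{1}{n}\binom{n(1+\alpha)}{n-1}$, deduce compact support via Stirling, and then continue analytically along the real half-line; note that on the negative part of that half-line the finiteness comes from Lemma \ref{WLLN_lemma_formula_gamma}, not from compact support, which only controls large $x$. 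Both routes rest on the same tacit fact, namely that the local analytic inverse of $\chi_{\mu_{\alpha,0}}$ at $0$ really is $\psi_{\mu_{\alpha,0}}$ with the moments as Taylor coefficients (justifiable by monotone convergence of $\psi_{\mu_{\alpha,0}}^{(n)}(u)$ as $u\to0^-$), so your version is no less rigorous, just more computational; as a by-product it exhibits the moments and the support bound $(\alpha+1)^{\alpha+1}/\alpha^\alpha$ explicitly (compare Remark \ref{WLLN_remark_support}). Finally, you are actually more complete than the paper on the divergence assertions: the paper's one-line proof is silent about $|\gamma|\ge 1$ in \eqref{WLLN_moments_mu_ab1} and about $\gamma\le -1$ in \eqref{WLLN_moments_mu_ab2}, whereas your combination of Lemma \ref{WLLN_S_image} (which gives $\int_0^\infty x\D\mu_{\alpha,\beta}(x)=\infty$ when $\beta>0$ and $\int_0^\infty x^{-1}\D\mu_{\alpha,\beta}(x)=\infty$ when $\alpha>0$, from the boundary values of the strictly decreasing function $S_{\mu_{\alpha,\beta}}$) with the elementary bound $\int_0^\infty x^\gamma\D\mu(x)\ge\int_0^\infty x\D\mu(x)-1$ for $\gamma\ge1$, and its reciprocal analogue for $\gamma\le-1$, settles those cases properly.
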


\begin{proof}
Let first $-1 < \gamma < 1$. Then \eqref{WLLN_moments_mu_ab1}, \eqref{WLLN_moments_mu_ab2}, and \eqref{WLLN_moments_mu_ab3} follow from Lemma \ref{WLLN_lemma_formula_gamma} together with the formula $\Gamma(1+\gamma)\Gamma(1-\gamma) = \frac{\pi\gamma}{\sin(\pi\gamma)}$.
Since $S_{\mu_{\alpha,0}}(z) = \frac{1}{(z+1)^\alpha}$ is analytic in a neighborhood of $0$, $\mu_{\alpha,0}$ has finite moments of all orders. Therefore the functions
\begin{align}
s & \mapsto \int_0^\infty x^s \D \mu_{\alpha,0}(x) \\
s & \mapsto \frac{\Gamma(1+s+s\alpha)}{\Gamma(1+s)\Gamma(2+s\alpha)}
\end{align}
are both analytic in the halfplane $\Re s > 0$ and they coincide for $s \in (0,1)$. Hence they are equal for all $s \in \mathbb{C}$ with $\Re s > 0$ which proves \eqref{WLLN_moments_mu_ab2}. By Lemma \ref{WLLN_lem_ab_ba} \eqref{WLLN_moments_mu_ab3} follows from \eqref{WLLN_moments_mu_ab2}.
\qed
\end{proof}

\begin{remark}
\label{WLLN_remark_support}
By Theorem \ref{WLLN_thm_moments_mu_ab} \eqref{WLLN_moments_mu_ab1} we have
\begin{enumerate}
\item If $\beta > 0$, then $\int_0^\infty x \D \mu_{\alpha,\beta} = \infty$. Hence $\sup(\text{supp}(\mu_{\alpha,\beta})) = \infty$. Similarly if $\alpha > 0$ then $\int_0^\infty x^{-1} \D \mu_{\alpha,\beta}(x) = \infty$. Hence $\inf(\text{supp}(\mu_{\alpha,\beta}))=0$.
\item If $\beta=0$, then by Stirling's formula 
\begin{align}
\sup(\text{supp}(\mu_{\alpha,0})) = \lim_{0 \to \infty} \left( \int_0^\infty t^n \D \mu_{\alpha,0} \right)^{\frac{1}{n}} = \frac{(\alpha+1)^{\alpha+1}}{\alpha^\alpha}.
\end{align}
Hence by Lemma \ref{WLLN_lem_ab_ba}, we have for $\alpha=0$
\begin{align}
\inf(\text{supp}(\mu_{0,\beta})) = \frac{\beta^\beta}{(\beta+1)^{\beta+1}}.
\end{align}
Note that $\sup(\text{supp}(\mu_{n,0})) = \frac{(n+1)^{n+1}}{n^n}, n \in \mathbb{N}$ was already proven by F. Larsen in \cite[Proposition 4.1]{Larsen2002} and it was proven by  T. Banica, S. T. Belinschi, M. Capitane and B. Collins in \cite{BanicaBelinschiCapitaineCollins2011} that $\text{supp}(\mu_{\alpha,0}) = \left[0,\frac{(\alpha+1)^{\alpha+1}}{\alpha^\alpha}\right]$. Note that this also follows from our Corollary \ref{cor_density_mu_alpha_0}.
\end{enumerate}
\end{remark}

If $\alpha=\beta$ it is also possible to calculate explicitely the density of $\mu_{\alpha,\alpha}$. To do this we require an additional lemma.
\begin{lemma}
\label{WLLN_lem_sin_sin_integral}
For $-1 < \gamma < 1$ and $-\pi < \theta < \pi$ we have
\begin{align}
\frac{\sin \theta}{\pi} \int_0^\infty \frac{t^\gamma}{t^2 + 2 \cos(\theta) t + 1} \D t = \frac{\sin(\theta \gamma)}{\sin(\pi \gamma)}.
\end{align}
\end{lemma}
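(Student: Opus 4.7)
The plan is to evaluate the integral on the left by residue calculus on a keyhole contour encircling the branch cut of $z^\gamma$. By antisymmetry in $\theta$ of both sides of the claimed identity, we may assume $0 < \theta < \pi$; the case $\theta=0$ is trivial (both sides vanish), and $\theta \in (-\pi,0)$ follows by reflection $\theta \mapsto -\theta$.

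The first step is to fix the branch of $z^\gamma = e^{\gamma \log z}$ determined by $\arg z \in (0,2\pi)$, so that the cut lies on $[0,\infty)$, and to set $f(z) = z^\gamma/(z^2 + 2\cos\theta\cdot z + 1)$. The denominator factors as $(z+e^{i\theta})(z+e^{-i\theta})$, so off the cut $f$ has simple poles only at $z_+ = -e^{-i\theta} = e^{i(\pi-\theta)}$ and $z_- = -e^{i\theta} = e^{i(\pi+\theta)}$, both of modulus one. For the keyhole contour $\Gamma_{\varepsilon,R}$ (the circle $|z|=R$ traversed counterclockwise, the circle $|z|=\varepsilon$ traversed clockwise, and the two segments along the cut, with $0 < \varepsilon < 1 < R$) both poles lie inside. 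A direct calculation from $z_\pm + \cos\theta = \pm i\sin\theta$ gives
\[ \mathrm{Res}_{z_+} f \;=\; \frac{e^{i(\pi-\theta)\gamma}}{2i\sin\theta}, \qquad \mathrm{Res}_{z_-} f \;=\; -\frac{e^{i(\pi+\theta)\gamma}}{2i\sin\theta}, \]
and the sum collapses, after pulling out $e^{i\pi\gamma}$, to $-e^{i\pi\gamma}\sin(\gamma\theta)/\sin\theta$.

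Next I would check the vanishing of the contributions from $|z|=R$ and $|z|=\varepsilon$. These are of order $R^{\gamma-1}$ and $\varepsilon^{\gamma+1}$ respectively, and vanish in the limit precisely because of the standing hypotheses $\gamma<1$ and $\gamma>-1$. On the two straight segments, the chosen branch gives $z^\gamma = t^\gamma$ on the upper edge and $z^\gamma = t^\gamma e^{2\pi i\gamma}$ on the lower edge, so together with the opposite orientations these pieces contribute $(1-e^{2\pi i\gamma}) I$ in the limit, where $I$ denotes the integral on the left of the identity to be proved.

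The residue theorem therefore gives $(1-e^{2\pi i\gamma})\, I = -2\pi i\, e^{i\pi\gamma}\sin(\gamma\theta)/\sin\theta$. Writing $1-e^{2\pi i\gamma} = -2i\, e^{i\pi\gamma}\sin(\pi\gamma)$ and cancelling the common nonzero factor $-2i\, e^{i\pi\gamma}$ yields $I = \pi \sin(\gamma\theta)/(\sin\theta\,\sin(\pi\gamma))$, which is exactly the claim after multiplying by $\sin\theta/\pi$. The only technical difficulty is the branch-cut bookkeeping, which produces the crucial factor $1-e^{2\pi i\gamma}$ in the limit; once that is in place the remaining identification is pure algebra.
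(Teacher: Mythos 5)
Your proof is correct: the factorization $z^2+2\cos\theta\,z+1=(z+e^{i\theta})(z+e^{-i\theta})$, the two residues, the decay rates $O(R^{\gamma-1})$ and $O(\varepsilon^{\gamma+1})$ on the large and small circles (which is exactly where $-1<\gamma<1$ enters), and the final algebra via $1-e^{2\pi i\gamma}=-2ie^{i\pi\gamma}\sin(\pi\gamma)$ all check out. Your route differs from the paper's only in packaging, not in substance: the paper first substitutes $t=e^{x}$, turning the left-hand side into $\tfrac12\int_{-\infty}^{\infty}e^{\gamma x}\left(\cosh x+\cos\theta\right)^{-1}\mathrm{d}x$, and then applies the residue theorem to this single-valued meromorphic integrand on the rectangle $-R\le\Re z\le R$, $0\le\Im z\le 2\pi$, whose relevant poles are $i(\pi-\theta)$ and $i(\pi+\theta)$. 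Under $z\mapsto e^{z}$ that rectangle is carried precisely onto your keyhole: the top-edge periodicity mismatch $e^{\gamma(x+2\pi i)}=e^{2\pi i\gamma}e^{\gamma x}$ is your branch-cut factor, and the two poles map to your $z_{\pm}$, so the two arguments are the same computation in different coordinates. What yours buys is that no substitution is needed; what the paper's buys is that the integrand is single-valued, so no branch bookkeeping is required. Incidentally, your symmetry reduction to $\theta\in(0,\pi)$ is harmless but unnecessary: for $\theta\in(-\pi,0)$ both poles $e^{i(\pi\mp\theta)}$ still lie inside the keyhole and off the cut, and your residue formulas hold verbatim with $\sin\theta<0$. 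One small caveat, common to your proof and the paper's: at $\gamma=0$ the right-hand side is the indeterminate $0/0$ and your factor $1-e^{2\pi i\gamma}$ vanishes, so the lemma there must be read as a limit (or checked by the elementary computation giving $\theta/\sin\theta$ for the integral); for $\gamma\neq0$ your argument is complete.
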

\begin{proof}
Note first that by the substitution $t = e^x$ we have
\begin{align}
\int_0^\infty \frac{t^\gamma}{t^2 + 2 \cos(\theta) t + 1} \D t = \frac{1}{2} \int_{-\infty}^\infty \frac{e^{\gamma x}}{\cosh x + \cos \theta} \D x.
\end{align}
The function 
\begin{align}
z \mapsto \frac{e^{\gamma x}}{\cosh x + \cos \theta}
\end{align}
is meromorphic with simple poles in $x = \pm \I (\pi - \theta) + p 2 \pi$, $p \in \mathbb{Z}$. Apply now the residue integral formula to this function on the boundary of
\begin{align}
\{z \in \mathbb{C}: -R \le \Re z \le R, 0 \le \Im z \le 2\pi \}
\end{align}
and let $R \to \infty$. The result follows.
\qed
\end{proof}

The density of $\mu_{\alpha,\alpha}$ was computed by P. Biane \cite[Section 5.4]{Biane1998}. For completeness we include a different proof based on Theorem \ref{WLLN_thm_moments_mu_ab} and Lemma \ref{WLLN_lem_sin_sin_integral}.

\begin{theorem}\cite{Biane1998}
\label{WLLN_density_alpha_eq_beta}
Let $\alpha > 0$ then $\mu_{\alpha,\alpha}$ has the density $f_{\alpha,\alpha}(t) \D t$, where
\begin{align}
f_{\alpha,\alpha}(t) = \frac{\sin\left(\frac{\pi}{\alpha+1}\right)}{\pi t \left(t^{\frac{1}{\alpha+1}} + 2 \cos\left(\frac{\pi}{\alpha+1}\right) + t^{-\frac{1}{\alpha+1}} \right)}
\end{align}
for $t \in (0,\infty)$. In  particular $\mu_{1,1}$ has the density $(\pi \sqrt{t}(1+t))^{-1}\D t$ and $\mu_{2,2}$ has the density
\begin{align}
\frac{\sqrt{3}}{2\pi (1 + t^{\frac{2}{3}} + t^{\frac{4}{3}})} \D t.
\end{align}
\end{theorem}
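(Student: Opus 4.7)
The plan is to prove the density formula by comparing Mellin transforms. Since $\mu_{\alpha,\alpha}$ may have unbounded support with blow-up at $0$ and $\infty$ (by Remark \ref{WLLN_remark_support}) it has only finitely many moments, so the classical moment problem does not apply directly; instead I will match fractional moments on a symmetric interval around $0$ and invoke the uniqueness of the Mellin transform on its strip of analyticity.

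First, I would evaluate the left-hand side using Theorem \ref{WLLN_thm_moments_mu_ab} \eqref{WLLN_moments_mu_ab1} with $\alpha=\beta$. The denominator $\Gamma(2+\gamma\alpha-\gamma\beta)=\Gamma(2)=1$, and the reflection formula $\Gamma(1+s)\Gamma(1-s)=\pi s/\sin(\pi s)$ applied to $s=\gamma(1+\alpha)$ collapses the numerator, yielding
\begin{align}
\int_0^\infty x^\gamma \D\mu_{\alpha,\alpha}(x) = \frac{(\alpha+1)\sin(\pi\gamma)}{\sin(\pi\gamma(\alpha+1))}, \qquad -\tfrac{1}{1+\alpha} < \gamma < \tfrac{1}{1+\alpha}.
\end{align}

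Next, I would compute the Mellin transform of the candidate density $f_{\alpha,\alpha}(x)\D x$. The substitution $x=t^{\alpha+1}$ (so $x^{\pm 1/(\alpha+1)} = t^{\pm 1}$ and $\D x = (\alpha+1)t^\alpha \D t$) converts the denominator of $f_{\alpha,\alpha}$ into $t^2+2\cos(\pi/(\alpha+1))t+1$ after clearing $t^{-1}$, producing
\begin{align}
\int_0^\infty x^\gamma f_{\alpha,\alpha}(x) \D x = (\alpha+1) \cdot \frac{\sin(\pi/(\alpha+1))}{\pi} \int_0^\infty \frac{t^{\gamma(\alpha+1)}}{t^2 + 2\cos(\pi/(\alpha+1))\,t + 1} \D t.
\end{align}
Applying Lemma \ref{WLLN_lem_sin_sin_integral} with $\theta=\pi/(\alpha+1)$ and exponent $\gamma(\alpha+1)\in(-1,1)$ gives the value $\sin(\pi\gamma)/\sin(\pi\gamma(\alpha+1))$, so this integral also equals $(\alpha+1)\sin(\pi\gamma)/\sin(\pi\gamma(\alpha+1))$. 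In particular $\gamma=0$ shows $f_{\alpha,\alpha}$ integrates to $1$.

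To conclude, I would note that both $\mu_{\alpha,\alpha}$ and the measure $f_{\alpha,\alpha}(x)\D x$ are probability measures on $(0,\infty)$ whose Mellin transforms $\gamma\mapsto \int_0^\infty x^\gamma\,\D(\cdot)(x)$ agree on the open real interval $(-1/(1+\alpha),1/(1+\alpha))$, hence agree on the vertical strip $|\Re s|<1/(1+\alpha)$ by analytic continuation. Mellin inversion then forces equality of the measures. The two explicit specializations ($\alpha=1$ giving $1/(\pi\sqrt{t}(1+t))$ and $\alpha=2$ giving the stated $\sqrt{3}/(2\pi)$-formula) follow by substituting $\alpha=1,2$ and using $\sin(\pi/2)=1$, $\cos(\pi/2)=0$, respectively $\sin(\pi/3)=\sqrt{3}/2$, $\cos(\pi/3)=1/2$ in the formula for $f_{\alpha,\alpha}$. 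The main obstacle I anticipate is the bookkeeping in the substitution $x=t^{\alpha+1}$ — in particular, clearing the factor $t^{-1}$ from the denominator and verifying the exponent condition $\gamma(\alpha+1)\in(-1,1)$ matches exactly the range of $\gamma$ for which the moment formula \eqref{WLLN_moments_mu_ab1} is finite — but no conceptual difficulty remains once the Mellin-transform framework is in place.
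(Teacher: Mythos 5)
Your proposal is correct and follows essentially the same route as the paper's proof: match the fractional moments of $\mu_{\alpha,\alpha}$ from Theorem \ref{WLLN_thm_moments_mu_ab} against the Mellin transform of the candidate density computed via the substitution $x=t^{\alpha+1}$ and Lemma \ref{WLLN_lem_sin_sin_integral}, then extend to the strip $|\Re\gamma|<\frac{1}{\alpha+1}$ by analytic continuation and conclude equality of measures. Your final appeal to Mellin inversion is the same step the paper phrases as equality of characteristic functions of the image measures under $x\mapsto\ln x$ evaluated at $\gamma=\I s$, so the two arguments coincide in substance.
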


\begin{proof}
To prove this note that for $|\gamma| < \frac{1}{1+\alpha}$ 
\begin{align}
\int_0^\infty x^\gamma f_{\alpha,\alpha}(x) \D x & = \int_0^\infty \frac{\sin\left(\frac{\pi}{\alpha+1}\right)(\alpha+1)y^{\gamma(\alpha+1)}}{\pi\left(y+2\cos\left(\frac{\pi}{\alpha+1}\right) + y^{-1}\right)} \frac{\D y}{y} \\
& = \frac{(\alpha+1)\sin\left(\frac{\pi}{\alpha+1}\right)}{\pi} \int_0^\infty \frac{y^{\gamma(\alpha+1)}}{y^2+2\cos\left(\frac{\pi}{\alpha+1}\right) y + 1} \D y
\end{align}
using the substitution $y = x^{\frac{1}{\alpha+1}}$. Now by Lemma \ref{WLLN_lem_sin_sin_integral} and Theorem \ref{WLLN_thm_moments_mu_ab} \eqref{WLLN_moments_mu_ab1} we have 
\begin{align}
\int_0^\infty x^\gamma f_{\alpha,\alpha}(x) \D x = \int_0^\infty x^\gamma \D \mu_{\alpha,\alpha}(x) < \infty.
\end{align}
This implies by unique analytic continuation that the same formula holds for all $\gamma \in \mathbb{C}$ with $|\Re \gamma| < \frac{1}{\alpha+1}$. In particular
\begin{align}
\int_0^\infty x^{\I s} f_{\alpha,\alpha}(x) \D x = \int_0^\infty x^{\I s} \D \mu_{\alpha,\alpha}(x)
\end{align}
for all $s \in \mathbb{R}$, which shows that the image measures under $x \mapsto \ln x$ of $f_{\alpha,\alpha}(x) \D x$ and $\mu_{\alpha,\alpha}$ have the same characteristic function. Hence $\mu_{\alpha,\alpha} = f_{\alpha,\alpha}(x) \D x$.
\qed
\end{proof}

\begin{proposition}
\label{WLLN_prop_mu_alpha_beta_continious_density}
For all $\alpha,\beta \ge 0$, $(\alpha,\beta)\neq(0,0)$, the measure $\mu_{\alpha,\beta}$ has a continious density $f_{\alpha,\beta}(x)$, $(x>0)$, with respect to the Lebesgue measure on $\mathbb{R}$ and
\begin{align}
\label{WLLN_eq_lim_f_alpha_beta}
\lim_{x \to 0^+} x f_{\alpha,\beta}(x) = \lim_{x \to \infty} x f_{\alpha,\beta}(x) = 0.
\end{align}
\end{proposition}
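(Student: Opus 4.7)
My plan is to derive a continuous density for $\mu_{\alpha,\beta}$ by Mellin inversion of the explicit moment formula in Theorem~\ref{WLLN_thm_moments_mu_ab}. Using the reflection identity $\Gamma(1+\gamma)\Gamma(1-\gamma) = \pi\gamma/\sin(\pi\gamma)$, I rewrite the moment function
\begin{align}
M(\gamma) := \int_0^\infty x^\gamma\,\D\mu_{\alpha,\beta}(x) = \frac{\Gamma(1+(\alpha+1)\gamma)\,\Gamma(1-(\beta+1)\gamma)}{\Gamma(1+\gamma)\,\Gamma(1-\gamma)\,\Gamma(2+(\alpha-\beta)\gamma)}
\end{align}
as a ratio of Gamma functions, which is meromorphic in $\gamma\in\mathbb{C}$ and holomorphic on the strip $-\tfrac{1}{\alpha+1} < \Re\gamma < \tfrac{1}{\beta+1}$ that contains the origin.

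The main step is a Stirling estimate for $|M(c+\I t)|$ on vertical lines in this strip, using $|\Gamma(a+\I b)| \sim \sqrt{2\pi}|b|^{a-1/2}e^{-\pi|b|/2}$. Adding the exponential rates $-\tfrac{\pi}{2}(\alpha+1)|t|$ and $-\tfrac{\pi}{2}(\beta+1)|t|$ in the numerator against $-\tfrac{\pi}{2}|t|$, $-\tfrac{\pi}{2}|t|$, and $-\tfrac{\pi}{2}|\alpha-\beta||t|$ in the denominator, the net exponential rate works out to $-\pi|t|\min(\alpha,\beta)$. When $\min(\alpha,\beta)>0$ this yields exponential decay, while in the degenerate case $\min(\alpha,\beta)=0$ (but $(\alpha,\beta)\neq(0,0)$) the exponentials cancel exactly and a polynomial $|t|^{-3/2}$ tail survives. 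In either case $M(c+\I\,\cdot\,)\in L^1(\mathbb{R})$, so the inverse Mellin transform
\begin{align}
f_{\alpha,\beta}(x) := \frac{1}{2\pi}\int_{-\infty}^\infty x^{-c-\I t-1}\,M(c+\I t)\,\D t
\end{align}
is a continuous function on $(0,\infty)$, independent of $c$ by Cauchy's theorem. Taking $c=0$ and applying Fourier inversion after the substitution $x = e^y$, the measures $f_{\alpha,\beta}(x)\,\D x$ and $\mu_{\alpha,\beta}$ share their Mellin transform on $\I\mathbb{R}$ and hence coincide, as in the proof of Theorem~\ref{WLLN_density_alpha_eq_beta}; in particular $f_{\alpha,\beta}\geq 0$ automatically.

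For the limits at the endpoints, I would shift the contour past the leftmost and rightmost poles of $M$ just outside the strip. These are simple poles with nonzero residues located at $\gamma_0 = -1/(\alpha+1)$ (coming from $\Gamma(1+(\alpha+1)\gamma)$) and $\gamma_1 = 1/(\beta+1)$ (from $\Gamma(1-(\beta+1)\gamma)$), and a standard residue computation yields $f_{\alpha,\beta}(x) \sim C_0\,x^{-\alpha/(\alpha+1)}$ as $x\to 0^+$ and $f_{\alpha,\beta}(x) \sim C_1\,x^{-1-1/(\beta+1)}$ as $x\to\infty$; these give $xf_{\alpha,\beta}(x) = O(x^{1/(\alpha+1)}) \to 0$ and $xf_{\alpha,\beta}(x) = O(x^{-1/(\beta+1)}) \to 0$, respectively. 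The degenerate cases $\alpha=0$ or $\beta=0$ follow even more directly from Remark~\ref{WLLN_remark_support}, since then the support of $\mu_{\alpha,\beta}$ is bounded away from $0$ (respectively from $\infty$), so $f_{\alpha,\beta}$ vanishes identically near that endpoint. The main obstacle I anticipate is the Stirling bookkeeping, particularly at the degenerate boundary $\min(\alpha,\beta)=0$ where the exponentials cancel exactly and the polynomial-order tail must be tracked carefully to secure integrability on the vertical contour.
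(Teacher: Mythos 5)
Your construction of the density coincides with the paper's own proof: the paper likewise substitutes $\gamma=\I s$ in Theorem \ref{WLLN_thm_moments_mu_ab}, views $h_{\alpha,\beta}(s)=\int_0^\infty x^{\I s}\,\D\mu_{\alpha,\beta}(x)$ as the characteristic function of the image measure of $\mu_{\alpha,\beta}$ under $x\mapsto\log x$, proves $h_{\alpha,\beta}\in L^1(\mathbb{R})$ from the identities $|\Gamma(1+\I s)|^2=\pi s/\sinh(\pi s)$ and $|\Gamma(2+\I s)|^2=\pi s(1+s^2)/\sinh(\pi s)$ --- recording exactly your degenerate-case bound $h_{\alpha,\beta}(s)=O(|s|^{-3/2})$, which is valid for all $(\alpha,\beta)\neq(0,0)$; your sharper exponential rate $-\pi\min(\alpha,\beta)|s|$ when $\min(\alpha,\beta)>0$ is correct but not needed --- and then concludes by Fourier inversion, just as you do. The genuine difference is how you obtain \eqref{WLLN_eq_lim_f_alpha_beta}. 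The paper gets it for free: since $h_{\alpha,\beta}\in L^1$, the log-image measure has a density $g_{\alpha,\beta}\in C_0(\mathbb{R})$ (Riemann--Lebesgue), and $x f_{\alpha,\beta}(x)=g_{\alpha,\beta}(\log x)\to 0$ as $x\to 0^+$ or $x\to\infty$. Your contour-shift argument is sound --- the poles at $-1/(\alpha+1)$ and $1/(\beta+1)$ are simple and not cancelled by the denominator Gamma factors, the residue terms give $x f_{\alpha,\beta}(x)=O(x^{1/(\alpha+1)})$, resp.\ $O(x^{-1/(\beta+1)})$, and the shifted integral contributes $O(x^{-c'-1})$ with $x\cdot x^{-c'-1}\to 0$ at the relevant endpoint, so even a vanishing residue does no harm --- and it buys strictly more than the proposition asks, namely the leading power-law asymptotics of $f_{\alpha,\beta}$ at $0$ and $\infty$ (consistent with the explicit densities of $\mu_{1,0}$, $\mu_{0,1}$ and $\mu_{\alpha,\alpha}$ given in the paper). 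But it costs you the horizontal-segment decay estimates needed to justify the shifts and the degenerate-case bookkeeping you flag as the main obstacle, all of which evaporate once you notice that your own $c=0$ inversion formula exhibits $x f_{\alpha,\beta}(x)$ as the value at $\log x$ of the inverse Fourier transform of an $L^1$ function, so that \eqref{WLLN_eq_lim_f_alpha_beta} follows at once from Riemann--Lebesgue.
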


\begin{proof}
By the method of proof of Theorem \ref{WLLN_density_alpha_eq_beta}, the integral
\begin{align}
h_{\alpha,\beta}(s) = \int_0^\infty x^{\I s} \D \mu_{\alpha,\beta}(x), \quad s \in \mathbb{R}
\end{align}
can be obtained by replacing $\gamma$ by $\I s$ in the formulas \eqref{WLLN_moments_mu_ab1}, \eqref{WLLN_moments_mu_ab2}, and \eqref{WLLN_moments_mu_ab3}. Moreover,
\begin{align}
h_{\alpha,\beta}(s) = \int_0^\infty \exp(\I s t) \D \sigma_{\alpha,\beta}(t)
\end{align}
where $\sigma_{\alpha,\beta}$ is the image measure of $\mu_{\alpha,\beta}$ by the map $x \mapsto \log x$, $(x>0)$. Hence by standard Fourier analysis, we know that if $h_{\alpha,\beta} \in L^1(\mathbb{R})$ then $\sigma_{\alpha,\beta}$ has a density $g_{\alpha,\beta} \in C_0(\mathbb{R})$ with respect to the Lebesgue measure on $\mathbb{R}$ and hence $\mu_{\alpha,\beta}$ has density $f_{\alpha,\beta}(x) = \frac{1}{x} g_{\alpha,\beta}(\log x)$ for $x > 0$, which satisfies the condition \eqref{WLLN_eq_lim_f_alpha_beta}. To prove that $h_{\alpha,\beta} \in L^1(\mathbb{R})$ for all $\alpha,\beta \ge 0$, $(\alpha,\beta)\neq(0,0)$, we observe first that
\begin{align}
\Gamma(1-z)\Gamma(1+z) = \frac{\pi z}{\sin \pi z}, \quad z \in \mathbb{C} \setminus \mathbb{Z}
\end{align}
and hence by the functional equation of $\Gamma$
\begin{align}
\Gamma(2-z)\Gamma(2+z) = \frac{\pi z(1-z^2)}{\sin \pi z}, \quad z \in \mathbb{C} \setminus \mathbb{Z}.
\end{align}

In particular, we have
\begin{align}
|\Gamma(1+\I s)|^2 & = \frac{\pi s}{\sinh \pi s}, \quad s \in \mathbb{R} \\
|\Gamma(2+\I s)|^2 & = \frac{\pi s (1+s^2)}{\sinh \pi s}, \quad s \in \mathbb{R}.
\end{align}

Applying these formulas to \eqref{WLLN_moments_mu_ab1}, \eqref{WLLN_moments_mu_ab2}, and \eqref{WLLN_moments_mu_ab3} with $\gamma$ replaced by $\I s$, we get
\begin{align}
h_{\alpha,\beta}(s) = O\left(|s|^{-3/2}\right), \quad \text{for } s \to \pm \infty 
\end{align}
for all choices of $\alpha,\beta \ge 0$, $(\alpha,\beta)\neq(0,0)$. Thus by the continuity of $h_{\alpha,\beta}$ it follows that $h_{\alpha,\beta} \in L^1(\mathbb{R})$, which proves the proposition.
\qed
\end{proof}

Note that by Remark \ref{WLLN_remark_support} it follows that $f_{\alpha,0}(x)$ can only be non-zero if $x \in \left(0,\frac{(\alpha+1)^{\alpha+1}}{\alpha^\alpha}\right)$ and $f_{0,\beta}(x)$ can only be non-zero if $x \in \left(\frac{\beta^\beta}{(\beta+1)^{\beta+1}},\infty\right)$. Since we have seen, that $\mu_{0,\beta}$ coincides with the stable distribution $v_{\alpha,\rho}$ with $\alpha= \frac{1}{\beta+1}$ and $\rho=1$ we have from \cite[Appendix 4]{BercoviciPataBiane1999} that

\begin{theorem}\cite{BercoviciPataBiane1999}
\label{WLLN_thm_density_0_beta}
The map
\begin{align}
\phi \mapsto \frac{\sin \phi \sin^\beta (\beta \phi)}{\sin^{\beta+1}((\beta+1)\phi)}, \quad 0 < \phi < \frac{\pi}{\beta+1}
\end{align}
is a bijection of the interval $\left(0,\frac{\pi}{\beta+1}\right)$ onto $\left(\frac{\beta^\beta}{(\beta+1)^{\beta+1}},\infty\right)$ and
\begin{align}
\label{WLLN_f_0_beta_density}
f_{\mu_{0,\beta}}\left(\frac{\sin\phi \sin^\beta(\beta \phi)}{\sin^{\beta+1}((\beta+1)\phi)}\right) & = \frac{\sin^{\beta+2}((\beta+1)\phi)}{\pi \sin^{\beta+1}(\beta \phi)}, \quad 0 < \phi < \frac{\pi}{\beta+1}.
\end{align}
\end{theorem}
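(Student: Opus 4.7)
The plan is to reduce this statement to the known density formula for free stable distributions in Bercovici--Pata--Biane \cite[Appendix A4]{BercoviciPataBiane1999} via the identification $\mu_{0,\beta} = v_{\alpha,1}$ with $\alpha = 1/(\beta+1)$, which was established earlier in the excerpt (in the paragraph following the computation of $S_{\mu_{0,1}}$). Since the theorem is explicitly attributed to Bercovici--Pata--Biane, the work is essentially to translate their parametric density formula for the $v_{\alpha,1}$ family into the $(\beta)$-parametrisation used here and to verify the stated bijectivity of the parametrising map.

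First I would recall the formula in \cite[Appendix A4]{BercoviciPataBiane1999} which gives the density of $v_{\alpha,1}$ in a parametric form involving a change of variable $x = x(\phi)$ on an interval of the form $(0, \pi/\alpha)$ (with their $\alpha < 1$). I would then substitute $\alpha = 1/(\beta+1)$, so that the interval $(0, \pi/\alpha)$ scales correctly; after a trivial rescaling $\phi \mapsto \phi$ (their angular variable equals $(\beta+1)\phi$ here, or equivalently one just relabels) the parametrising map becomes
\begin{align}
\phi \mapsto \frac{\sin\phi \, \sin^\beta(\beta\phi)}{\sin^{\beta+1}((\beta+1)\phi)}, \qquad 0 < \phi < \frac{\pi}{\beta+1},
\end{align}
and the corresponding density value is $\sin^{\beta+2}((\beta+1)\phi)/(\pi\sin^{\beta+1}(\beta\phi))$, matching \eqref{WLLN_f_0_beta_density}.

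Next I would verify that this map is indeed a bijection of $(0,\pi/(\beta+1))$ onto $(\beta^\beta/(\beta+1)^{\beta+1},\infty)$. The limits at the endpoints are easy: as $\phi\to 0^+$ the small-angle approximations $\sin\phi\sim\phi$, $\sin(\beta\phi)\sim\beta\phi$ and $\sin((\beta+1)\phi)\sim(\beta+1)\phi$ give the limit
\begin{align}
\lim_{\phi\to 0^+}\frac{\sin\phi\,\sin^\beta(\beta\phi)}{\sin^{\beta+1}((\beta+1)\phi)} = \frac{\beta^\beta}{(\beta+1)^{\beta+1}},
\end{align}
which is consistent with the left endpoint of $\mathrm{supp}(\mu_{0,\beta})$ computed in Remark \ref{WLLN_remark_support}, while as $\phi \to \pi/(\beta+1)^-$ the denominator vanishes and the expression tends to $+\infty$. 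For strict monotonicity I would compute the logarithmic derivative
\begin{align}
\frac{d}{d\phi}\log\left(\frac{\sin\phi\,\sin^\beta(\beta\phi)}{\sin^{\beta+1}((\beta+1)\phi)}\right) = \cot\phi + \beta\cot(\beta\phi) - (\beta+1)\cot((\beta+1)\phi)
\end{align}
and show it is strictly positive on $(0,\pi/(\beta+1))$; this is a standard convexity/concavity-of-$\cot$ style inequality, since $u\cot(u\phi)$ is strictly decreasing in $u>0$ on $u\phi\in(0,\pi)$ (because $(d/du)(u\cot(u\phi)) = \cot(u\phi) - u\phi/\sin^2(u\phi) < 0$ for $u\phi\in(0,\pi)$). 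Applying this with the three values $u=1, \beta, \beta+1$ at the single point $\phi$ (and using that $\beta+1 > \beta, 1$) makes the right-hand side positive.

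The main obstacle is purely bookkeeping: matching the parametric density of Bercovici--Pata--Biane with the specific $\alpha = 1/(\beta+1)$ substitution and extracting the clean form \eqref{WLLN_f_0_beta_density}. Once the parametrisation map has been shown to be a bijection onto the correct interval (which coincides with the closure of $\mathrm{supp}(\mu_{0,\beta})$ modulo endpoints, by Remark \ref{WLLN_remark_support}) and the density formula from the reference has been translated, there is nothing further to prove. The particular values $\mu_{1,1}$ and $\mu_{2,2}$ are not needed here (they were recorded in Theorem \ref{WLLN_density_alpha_eq_beta}), so this statement is really just the BPB density transported to the $\mu_{0,\beta}$ language.
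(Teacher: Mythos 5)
Your core argument coincides with the paper's own proof: identify $\mu_{0,\beta}=v_{\frac{1}{\beta+1},1}$ (as established in the paper after the computation of $S_{\mu_{0,1}}$), quote the parametric density formula of Bercovici--Pata--Biane for $v_{\alpha,1}$ (\cite[Proposition A1.4]{BercoviciPataBiane1999}), and substitute $\alpha=\frac{1}{\beta+1}$, $\theta=(\beta+1)\phi$. Note, however, that the paper needs no separate bijectivity argument: the cited proposition already states that for each $x>\alpha(1-\alpha)^{1/\alpha-1}$ the angle $\theta\in(0,\pi)$ solving the parametrizing equation is \emph{unique}, so the bijection claim is simply that statement transported through the linear bijection $\theta\mapsto\theta/(\beta+1)$.

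The place where your write-up has a genuine error is the supplementary monotonicity check. The logarithmic derivative of $\phi\mapsto\sin\phi\,\sin^{\beta}(\beta\phi)\,\sin^{-(\beta+1)}((\beta+1)\phi)$ is, by the chain rule,
\begin{align}
\cot\phi+\beta^{2}\cot(\beta\phi)-(\beta+1)^{2}\cot((\beta+1)\phi),
\end{align}
not $\cot\phi+\beta\cot(\beta\phi)-(\beta+1)\cot((\beta+1)\phi)$: differentiating $\beta\ln\sin(\beta\phi)$ produces $\beta^{2}\cot(\beta\phi)$. With the squared coefficients, your lemma that $u\mapsto u\cot(u\phi)$ is decreasing no longer applies termwise, and the analogue for $u\mapsto u^{2}\cot(u\phi)$ is false: its $u$-derivative equals $\frac{u}{\sin^{2}(u\phi)}\left(\sin(2u\phi)-u\phi\right)$, which is positive when $u\phi$ is small, so near $\phi=0$ the termwise inequality actually goes the wrong way and positivity of the sum arises only from cancellation. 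The conclusion is still true: multiplying the corrected derivative by $\sin\phi\sin(\beta\phi)\sin((\beta+1)\phi)>0$ and using the addition formulas for $(\beta+1)\phi=\phi+\beta\phi$ turns it into
\begin{align}
\bigl(\cos\phi\sin(\beta\phi)-\beta\sin\phi\cos(\beta\phi)\bigr)^{2}+(\beta+1)^{2}\sin^{2}\phi\,\sin^{2}(\beta\phi)>0,
\end{align}
a completing-the-square argument parallel to the computation of $A(\phi_1,\phi_2)$ in the paper's proof of Theorem \ref{WLLN_thm_density_mu_alpha_beta}. So either inherit bijectivity from the uniqueness clause in Bercovici--Pata--Biane, as the paper does, or repair the derivative as above; as written, your monotonicity step fails.
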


\begin{proof}
We know that $\mu_{0,\beta} = v_{\frac{1}{\beta+1},1}$, the stable distribution with parameters $\alpha= \frac{1}{\beta+1}$ and $\rho=1$. Moreover, we have from \cite[Proposition A1.4]{BercoviciPataBiane1999}, that $v_{\alpha,1}$ has density $\psi_{\alpha,1}$ on the interval $\left(\alpha(1-\alpha)^{1/\alpha-1},\infty\right)$ given by
\begin{align}
\psi_{\alpha,1}(x) = \frac{1}{\pi} \sin^{1+\frac{1}{\alpha}} \theta \sin^{-\frac{1}{\alpha}} ((1-\alpha)\theta),
\end{align}
where $\theta \in (0,\pi)$ is the only solution to the equation
\begin{align}
x = \sin^{-\frac{1}{\alpha}} \theta \sin^{\frac{1}{\alpha}-1} ((1-\alpha)\theta) \sin \alpha \theta.
\end{align}
It is now easy to check that $f_{0,\beta}(x) = \psi_{\frac{1}{\beta+1},1}(x)$ has the form \eqref{WLLN_f_0_beta_density} by using the substitution $\phi = \frac{\theta}{\beta+1}$.
\qed
\end{proof}

\begin{corollary}
\label{cor_density_mu_alpha_0}
The map
\begin{align}
\phi \mapsto \frac{\sin^{\alpha+1} ((\alpha+1) \phi)}{\sin \phi \sin^{\alpha}(\alpha \phi)}, \quad 0 < \phi < \frac{\pi}{\alpha+1}
\end{align}
is a bijection of the interval $\left(0,\frac{\pi}{\alpha+1}\right)$ onto $\left(0,\frac{(\alpha+1)^{\alpha+1}}{\alpha^\alpha}\right)$ and
\begin{align}
\label{WLLN_f_alpha_0_density}
f_{\mu_{\alpha,0}}\left(\frac{\sin^{\alpha+1} ((\alpha+1) \phi)}{\sin \phi \sin^{\alpha}(\alpha \phi)}\right) & = \frac{\sin^2 \phi \sin^{\alpha-1}(\alpha\phi)}{\pi \sin^{\alpha}((\alpha+1)\phi)}, \quad 0 < \phi < \frac{\pi}{\alpha+1}.
\end{align}
\end{corollary}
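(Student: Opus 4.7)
The plan is to deduce Corollary \ref{cor_density_mu_alpha_0} directly from Theorem \ref{WLLN_thm_density_0_beta} by using the reciprocal symmetry from Lemma \ref{WLLN_lem_ab_ba}. Specifically, taking $\beta=\alpha$ in Theorem \ref{WLLN_thm_density_0_beta} gives the density of $\mu_{0,\alpha}$ parametrised by $\phi\in(0,\pi/(\alpha+1))$, and Lemma \ref{WLLN_lem_ab_ba} says $\mu_{\alpha,0}=\dot\zeta(\mu_{0,\alpha})$ where $\zeta(t)=t^{-1}$. So the density $f_{\alpha,0}$ is obtained from $f_{0,\alpha}$ by the standard change-of-variables formula for the reciprocal: $f_{\alpha,0}(x)=x^{-2}f_{0,\alpha}(x^{-1})$.

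First I would set $y=\frac{\sin\phi\sin^{\alpha}(\alpha\phi)}{\sin^{\alpha+1}((\alpha+1)\phi)}$, which by Theorem \ref{WLLN_thm_density_0_beta} traverses $(\frac{\alpha^\alpha}{(\alpha+1)^{\alpha+1}},\infty)$ bijectively as $\phi$ runs through $(0,\pi/(\alpha+1))$. Then $x=1/y=\frac{\sin^{\alpha+1}((\alpha+1)\phi)}{\sin\phi\sin^{\alpha}(\alpha\phi)}$ traverses $(0,\frac{(\alpha+1)^{\alpha+1}}{\alpha^{\alpha}})$ bijectively, which is exactly the bijection statement of the corollary.

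Next I would combine the change-of-variables formula with the density expression from Theorem \ref{WLLN_thm_density_0_beta}:
\begin{align}
f_{\alpha,0}(x)=\frac{1}{x^{2}}f_{0,\alpha}(1/x)=y^{2}\cdot\frac{\sin^{\alpha+2}((\alpha+1)\phi)}{\pi\sin^{\alpha+1}(\alpha\phi)}.
\end{align}
Substituting the expression for $y$ and cancelling,
\begin{align}
f_{\alpha,0}(x)=\frac{\sin^{2}\phi\sin^{2\alpha}(\alpha\phi)}{\sin^{2(\alpha+1)}((\alpha+1)\phi)}\cdot\frac{\sin^{\alpha+2}((\alpha+1)\phi)}{\pi\sin^{\alpha+1}(\alpha\phi)}=\frac{\sin^{2}\phi\sin^{\alpha-1}(\alpha\phi)}{\pi\sin^{\alpha}((\alpha+1)\phi)},
\end{align}
which is exactly \eqref{WLLN_f_alpha_0_density}.

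No genuine obstacle is expected; the argument is a formal consequence of Lemma \ref{WLLN_lem_ab_ba} and Theorem \ref{WLLN_thm_density_0_beta}. The only point requiring a line of care is verifying that $\mu_{0,\alpha}$ has no atoms and is supported in $(0,\infty)$ (so that the reciprocal map is well-defined $\mu_{0,\alpha}$-a.e. and the Jacobian formula applies), but this is immediate from Proposition \ref{WLLN_prop_mu_alpha_beta_continious_density} together with Remark \ref{WLLN_remark_support}, which place the support of $\mu_{0,\alpha}$ inside $(\frac{\alpha^\alpha}{(\alpha+1)^{\alpha+1}},\infty)$.
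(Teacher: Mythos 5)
Your proposal is correct and is essentially the paper's own proof: the paper likewise invokes Lemma \ref{WLLN_lem_ab_ba} to write $f_{\alpha,0}(x)=x^{-2}f_{0,\alpha}(1/x)$ and then derives the corollary from Theorem \ref{WLLN_thm_density_0_beta} by elementary calculations, which you have simply carried out explicitly (and correctly). No gap.
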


\begin{proof}
Since $\mu_{\alpha,0}$ is the image measure of $\mu_{0,\alpha}$ by the map $t \mapsto \frac{1}{t}$, $(t>0)$, we have
\begin{align}
f_{\alpha,0}(x) = \frac{1}{x^2}f_{0,\alpha}\left(\frac{1}{x}\right), \quad x>0.
\end{align}
The corollary now follows from Theorem \ref{WLLN_thm_density_0_beta} by elementary calculations.
\qed
\end{proof}

We next use Biane's method to compute the density $f_{\alpha,\beta}$ for all $\alpha,\beta > 0$.

\begin{theorem}
\label{WLLN_thm_density_mu_alpha_beta}
Let $\alpha,\beta > 0$. Then for each $x > 0$ there are unique real numbers $\phi_1, \phi_2 > 0$ for which
\begin{align}
\pi & = (\alpha+1)\phi_1 + (\beta+1)\phi_2 \label{WLLN_eq_thm_f_alpha_beta_1} \\
x & = \frac{\sin^{\alpha+1} \phi_2}{\sin^{\beta+1} \phi_1}\sin^{\beta-\alpha}(\phi_1+\phi_2). \label{WLLN_eq_thm_f_alpha_beta_2}
\end{align}
Moreover
\begin{align}
\label{WLLN_eq_thm_f_alpha_beta}
f_{\mu_{\alpha,\beta}}\left( x \right) = \frac{\sin^{\beta+2}\phi_1}{\pi \sin^{\alpha}\phi_2} \sin^{\alpha-\beta-1}(\phi_1+\phi_2).
\end{align}
\end{theorem}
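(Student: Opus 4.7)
My approach would be to apply Stieltjes inversion to the Cauchy transform $G_{\mu_{\alpha,\beta}}$, exploiting the explicit form of the $S$-transform. Since $\chi_\mu(w) = \frac{w}{w+1}S_\mu(w) = \frac{w(-w)^\beta}{(1+w)^{\alpha+1}}$ and since $\psi_\mu(1/\zeta) = \zeta G_\mu(\zeta) - 1$, the quantity $w(x) = xG_{\mu_{\alpha,\beta}}(x+\I 0^+) - 1$ must satisfy $\chi_\mu(w) = 1/x$, i.e., the algebraic equation $-(-w)^{\beta+1} = x^{-1}(1+w)^{\alpha+1}$. By Proposition \ref{WLLN_prop_mu_alpha_beta_continious_density} a continuous density exists, and $\Im G(x+\I 0^+)\le 0$ forces $\Im w\le 0$. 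The task then reduces to identifying the correct solution $w(x)$ in the closed lower half plane and recovering $f_{\mu_{\alpha,\beta}}(x) = -\pi^{-1}\Im\bigl((1+w)/x\bigr)$.

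The natural parameterization is $1 + w = r_1 e^{-\I\phi_1}$ and $-w = r_2 e^{\I\phi_2}$ with $r_1, r_2 > 0$ and $\phi_1, \phi_2 \in (0,\pi)$, which automatically enforces $\Im w < 0$. The identity $(1+w)+(-w)=1$ then determines $r_1$ and $r_2$ by the law of sines, $r_1 = \sin\phi_2/\sin(\phi_1+\phi_2)$ and $r_2 = \sin\phi_1/\sin(\phi_1+\phi_2)$. Substituting the polar forms into the algebraic equation above, equating moduli gives $x = r_1^{\alpha+1}/r_2^{\beta+1}$, which simplifies to \eqref{WLLN_eq_thm_f_alpha_beta_2}, and equating arguments modulo $2\pi$ forces $(\alpha+1)\phi_1 + (\beta+1)\phi_2 = \pi$, which is \eqref{WLLN_eq_thm_f_alpha_beta_1}. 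Stieltjes inversion, $f(x) = -\Im(1+w)/(\pi x) = r_1\sin\phi_1/(\pi x)$, then produces \eqref{WLLN_eq_thm_f_alpha_beta} after substituting the formulas for $r_1$ and $x$.

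Two tasks remain. First, one must show that for every $x > 0$ the system \eqref{WLLN_eq_thm_f_alpha_beta_1}--\eqref{WLLN_eq_thm_f_alpha_beta_2} has a unique solution $(\phi_1,\phi_2)$; eliminating $\phi_2$ via \eqref{WLLN_eq_thm_f_alpha_beta_1} and computing the logarithmic derivative of the right-hand side of \eqref{WLLN_eq_thm_f_alpha_beta_2} shows that $x$ is strictly monotone in $\phi_1 \in (0,\pi/(\alpha+1))$ with boundary values $0$ and $\infty$. The main obstacle is the second task: because $(-w)^{\beta+1}$ and $(1+w)^{\alpha+1}$ are multivalued for non-integer $\alpha,\beta$, one has to certify that the branch of $w(x)$ selected by our parameterization is precisely the one corresponding to $xG_{\mu_{\alpha,\beta}}(x+\I 0^+) - 1$. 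The cleanest resolution is to verify \emph{a posteriori} that the candidate $g(x)$ given by \eqref{WLLN_eq_thm_f_alpha_beta} is a continuous positive function on $(0,\infty)$ whose fractional moments, computed by the change of variable $x = x(\phi_1)$ in $\int_0^\infty x^\gamma g(x)\,\D x$, agree with the formula of Theorem \ref{WLLN_thm_moments_mu_ab} throughout the strip $-1/(\alpha+1) < \gamma < 1/(\beta+1)$; since Proposition \ref{WLLN_prop_mu_alpha_beta_continious_density} provides an actual continuous density $f_{\mu_{\alpha,\beta}}$ with these same moments, uniqueness of a continuous density with prescribed Mellin transform on an open strip identifies $g$ with $f_{\mu_{\alpha,\beta}}$ and completes the proof.
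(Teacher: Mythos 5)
Your derivation of the candidate formulas follows essentially the paper's own route: the algebraic relation satisfied by $w=\zeta G_{\mu_{\alpha,\beta}}(\zeta)-1$ coming from $\chi_{\mu_{\alpha,\beta}}(z)=-(-z)^{\beta+1}/(1+z)^{\alpha+1}$, the law-of-sines parameterization (your $w$ is the complex conjugate of the paper's variable $z\in\mathbb{C}^+$), the Stieltjes inversion giving $f_{\alpha,\beta}(x)=\frac{\sin\phi_1\sin\phi_2}{\pi x\sin(\phi_1+\phi_2)}$, and the existence/uniqueness argument (the paper sets $\phi_1=\frac{\pi-\theta}{\alpha+1}$, $\phi_2=\frac{\theta}{\beta+1}$ and shows the right-hand side of \eqref{WLLN_eq_thm_f_alpha_beta_2} is strictly increasing in $\theta$ with limits $0$ and $\infty$) all appear there in the same form.

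The gap is the step you yourself single out as the main obstacle, and your proposed resolution does not work as described. The paper settles the branch question directly: with principal arguments it defines $F(z)=-(1+z)^{\alpha+1}/(-z)^{\beta+1}$ on the open set $H_{\alpha,\beta}$ of $z\in\mathbb{C}^+$ where $(\alpha+1)\arg(1+z)+(\beta+1)(\pi-\arg z)<\pi$, notes that $\Im F(z)<0$ there, and proves $G_{\mu_{\alpha,\beta}}(F(z))=(1+z)/F(z)$ on all of $H_{\alpha,\beta}$ by analytic continuation from the interval $(-1,0)$, where the identity is simply the definition of the $S$-transform; the density is then obtained as a boundary value along the paths $z_t=\Phi^{-1}(t\phi_1,t\phi_2)$ inside $H_{\alpha,\beta}$. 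Your substitute --- verifying a posteriori that the parametric candidate $g$ has the fractional moments of Theorem \ref{WLLN_thm_moments_mu_ab} --- is asserted, not performed, and it is not a routine computation: after the change of variables $x=u(\theta)$, the integral $\int_0^\infty x^\gamma g(x)\,dx$ becomes an integral over $(0,\pi)$ of products of non-integer powers of sines with the incommensurable arguments $\frac{\pi-\theta}{\alpha+1}$, $\frac{\theta}{\beta+1}$, $\frac{\pi-\theta}{\alpha+1}+\frac{\theta}{\beta+1}$, for which no elementary evaluation is available; evaluating it drives one back to precisely the contour/analytic-continuation argument the verification was meant to avoid. (The paper does use your Mellin-matching strategy, but only in the symmetric case $\alpha=\beta$ of Theorem \ref{WLLN_density_alpha_eq_beta}, where the density is an explicit function of $x$ and the moment integral reduces to the residue computation of Lemma \ref{WLLN_lem_sin_sin_integral}; for $\alpha\neq\beta$ the density is known only parametrically and that computation is unavailable.) A smaller, related flaw: equating arguments only gives $(\alpha+1)\phi_1+(\beta+1)\phi_2\in\pi+2\pi\mathbb{Z}$, and for large $\alpha,\beta$ the triangle $\phi_1,\phi_2>0$, $\phi_1+\phi_2<\pi$ does contain solutions of \eqref{WLLN_eq_thm_f_alpha_beta_2} with weighted angle sum $3\pi$; selecting the solution with sum exactly $\pi$ \emph{is} the branch-identification problem, so \eqref{WLLN_eq_thm_f_alpha_beta_1} is not ``forced'' by your normalization but must be shown to correspond to the actual boundary value of $G_{\mu_{\alpha,\beta}}$.
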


\begin{proof}
As $\mu_{\alpha,\beta}$ has the $S$-transform $S_{\mu_{\alpha,\beta}}(z)=\frac{(-z)^\beta}{(1+z)^{\alpha}}$ we by Definition \ref{WLLN_def_S_transform} observe that 
\begin{align}
\chi_{\mu_{\alpha,\beta}}(z) = \frac{-(-z)^{\beta+1}}{(1+z)^{\alpha+1}} \quad \text{whence} \quad \psi_{\mu_{\alpha,\beta}}\left(-\frac{(-z)^{\beta+1}}{(1+z)^{\alpha+1}}\right)=z
\end{align}
for $z$ in some complex neighborhood of $(-1,0)$. Now it is known that
\begin{align}
G_{\mu}\left(\frac{1}{t}\right)=t\left(1+\psi_{\mu}(t)\right)
\end{align}
for every probability measure on $(0,\infty)$. Hence
\begin{align}
\label{WLLN_eq_G_mu_alpha_beta}
G_{\mu_{\alpha,\beta}}\left( - \frac{(1+z)^{\alpha+1}}{(-z)^{\beta+1}} \right)= - \frac{(-z)^{\beta+1}}{(1+z)^\alpha}
\end{align}
for $z$ in a complex neighborhood of $(-1,0)$.

Let $H$ denote the upper half plane in $\mathbb{C}$:
\begin{align}
H = \{ z \in \mathbb{C} : \Im z > 0 \}.
\end{align}

For $z \in H$, put
\begin{align}
\phi_1 &= \phi_1(z) = \arg(1+z) \in (0,\pi) \\
\phi_2 &= \phi_2(z) = \pi - \arg(z) \in (0,\pi).
\end{align}
Basic trigonometry applied to the triangle with vertices $-1$, $0$ and $z$, shows that $\phi_1 + \phi_2 < \pi$ and
\begin{align}
\frac{\sin \phi_1}{|z|} = \frac{\sin \phi_2}{|1+z|} = \frac{\sin(\pi-\phi_1-\phi_2)}{1}.
\end{align}
Hence
\begin{align}
|z| = \frac{\sin \phi_1}{\sin(\phi_1+\phi_2)} \quad \text{and} \quad |1+z| = \frac{\sin \phi_2}{\sin(\phi_1+\phi_2)}
\end{align}
from which
\begin{align}
z = - \frac{\sin \phi_1}{\sin(\phi_1+\phi_2)} e^{\I \phi_2} \quad \text{and} \quad \Im z = \frac{\sin \phi_1 \sin \phi_2}{\sin(\phi_1+\phi_2)}.
\end{align}
It follows that $\Phi \colon z \mapsto (\phi_1(z),\phi_2(z))$ is a diffeomorphism of $H$ onto the triangle $T=\{(\phi_1,\phi_2) \in \mathbb{R}^2 : \phi_1, \phi_2 > 0, \phi_1 + \phi_2 < \pi \}$ with invers
\begin{align}
\Phi^{-1}(\phi_1,\phi_2) = - \frac{\sin \phi_1}{\sin(\phi_1+\phi_2)}e^{-\I \phi_2}, \quad (\phi_1,\phi_2) \in T.
\end{align}

Put $H_{\alpha,\beta} = \{ z \in H : (\alpha+1)\phi_1(z) + (\beta+1)\phi_2(z) < \pi \}$.
Then $H_{\alpha,\beta} = \Phi^{-1}\left( T_{\alpha,\beta} \right)$ where
$T_{\alpha,\beta} = \{(\phi_1,\phi_2) \in T : (\alpha+1)\phi_1 + (\beta+1)\phi_2 < \pi \}.$

In particular $H_{\alpha,\beta}$ is an open connected subset of $H$. Put
\begin{align}
F(z) = - \frac{(1+z)^{\alpha+1}}{(-z)^{\beta+1}}, \quad \Im z > 0.
\end{align}
Then
\begin{align}
\label{WLLN_eq_Fz_alpha_beta}
F(z) = \frac{|1+z|^{\alpha+1}}{|z|^{\beta+1}} e^{\I ((\alpha+1)\phi_1(z) + (\beta+1)\phi_2(z) - \pi)}
\end{align}
so for $z \in H_{\alpha,\beta}$, $\Im F(z) < 0$. Therefore $G_{\mu_{\alpha,\beta}}(F(z))$ is a well-defined analytic function on $H_{\alpha,\beta}$, and since $(-1,0)$ is contained in the closure of $H_{\alpha,\beta}$ it follows from \eqref{WLLN_eq_G_mu_alpha_beta}
\begin{align}
\label{WLLN_eq_G_alpha_beta_F}
G_{\mu_{\alpha,\beta}}(F(z)) = \frac{1+z}{F(z)}
\end{align}
for $z$ in some open subset of $H_{\alpha,\beta}$ and thus by analyticity it holds for all $z \in H_{\alpha,\beta}$.

Let $x > 0$ and assume that $\phi_1, \phi_2 > 0$ satisfy \eqref{WLLN_eq_thm_f_alpha_beta_1} and \eqref{WLLN_eq_thm_f_alpha_beta_2}. Put
\begin{align}
z = \Phi^{-1}(\phi_1,\phi_2) = - \frac{\sin \phi_1}{\sin(\phi_1+\phi_2)} e^{-\I\phi_2}.
\end{align}
Then by \eqref{WLLN_eq_Fz_alpha_beta}
\begin{align}
F(z) = \frac{|1+z|^{\alpha+1}}{|z|^{\beta+1}} = \left( \frac{\sin \phi_2}{\sin(\phi_1+\phi_2)}\right)^{\alpha+1} \left( \frac{\sin(\phi_1+\phi_2)}{\sin \phi_1} \right)^{\beta+1} = x.
\end{align}
Since $\mu_{\alpha,\beta}$ has a continious density $f_{\alpha,\beta}$ on $(0,\infty)$ by Proposition \ref{WLLN_prop_mu_alpha_beta_continious_density}, the inverse Stieltjes transform gives
\begin{align}
f_{\alpha,\beta}(x) = -\frac{1}{\pi} \lim_{w \to x, \Im w > 0} \Im G_{\mu_{\alpha,\beta}}(w) = \frac{1}{\pi} \lim_{w \to x, \Im w < 0} \Im G_{\mu_{\alpha,\beta}}(w).
\end{align}
For $0<t<1$, put $z_t = \Phi^{-1}(t\phi_1,t\phi_2)$. Then
\begin{align}
z_t \in \Phi^{-1}\left(T_{\alpha,\beta}\right) = H_{\alpha,\beta}.
\end{align}
Thus $\Im F(z_t) < 0$. Moreover, $ z_t \to z$ and $F(z_t) \to F(z) = x$ for $t \to 1^-$. Hence by \eqref{WLLN_eq_G_alpha_beta_F},
\begin{align}
f_{\alpha,\beta}(x) = \frac{1}{\pi} \lim_{t \to 1^-} \Im G_{\mu_{\alpha,\beta}}(F(z_t)) = \frac{1}{\pi} \lim_{t \to 1^-} \Im \left( \frac{z_t+1}{F(z_t)} \right) = \frac{\Im z}{\pi x} = \frac{\sin \phi_1 \sin \phi_2}{\pi x \sin(\phi_1+\phi_2)}
\end{align}
which proves \eqref{WLLN_eq_thm_f_alpha_beta}. To complete the proof of Theorem \ref{WLLN_thm_density_mu_alpha_beta}, we only need to prove the existence and uniqueness of $\phi_1, \phi_2 > 0$. Assume that $\phi_1, \phi_2$ satisfy \eqref{WLLN_eq_thm_f_alpha_beta_1} then
\begin{align}
\phi_1 = \frac{\pi - \theta}{\alpha+1} \quad \text{and} \quad \phi_2=\frac{\theta}{\beta+1}
\end{align}
for an unique $\theta \in (0,\pi)$. Moreover,
\begin{align}
\frac{\D \phi_1}{\D \theta} = -\frac{1}{\alpha+1} \quad \text{and} \quad \frac{\D \phi_2}{\D \theta} = \frac{1}{\beta+1}.
\end{align}
Hence, expressing $u = \frac{\sin^{\alpha+1} \phi_2}{\sin^{\beta+1} \phi_1} \sin^{\beta-\alpha}(\phi_1+\phi_2)$ as a function $u(\theta)$ of $\theta$, we get 
\begin{align}
(\alpha+1)(\beta+1) \frac{\D u(\theta)}{\D \theta} & = (\beta+1)^2 \cot \phi_1 + (\alpha+1)^2 \cot \phi_2 - 2(\alpha-\beta)^2 \cot(\phi_1+\phi_2) \\
& = \frac{A(\phi_1,\phi_2)}{\sin \phi_1 \sin \phi_2 \sin(\phi_1+ \phi_2)}
\end{align}
where
\begin{align}
A(\phi_1,\phi_2) = \left((\alpha+1)\sin\phi_1\cos\phi_2+(\beta+1)\cos\phi_1\sin\phi_2\right)^2+(\alpha-\beta)^2 \sin^2\phi_1 \sin^2\phi_2.
\end{align}
For $\alpha \neq \beta$ $A(\phi_1,\phi_2) \ge (\alpha-\beta)^2 \sin^2 \phi_1 \sin^2 \phi_2 > 0$ and for $\alpha=\beta$ $A(\phi_1,\phi_2) = (\alpha+1)^2 \sin(\phi_1+\phi_2) > 0$. Hence $u(\theta)$ is a differentiable, strictly increasing function of $\theta$, and it is easy to check that
\begin{align}
\lim_{\theta \to 0^+} u(\theta) = 0 \quad \text{and} \quad \lim_{\theta \to \pi^-} u(\theta) = \infty.
\end{align}
Hence $u(\theta)$ is a bijection of $(0,\pi)$ onto $(0,\infty)$, which completes the proof of Theorem \ref{WLLN_thm_density_mu_alpha_beta}.
\qed
\end{proof}

\begin{remark}
It is much more complicated to express the densities $f_{\alpha,\beta}(x)$ directly as functions of $x$. This has been done for $\beta=0$, $\alpha \in \mathbb{N}$ by K. Penson and K. \.{Z}yczkowski in \cite{PensonZyczkowski2011} and extended to the case $\alpha \in \mathbb{Q}^+$ by W. M{\l}otkowski, K. Penson and K. \.{Z}yczkowski in \cite[Theorem 3.1]{MlotkowskiPensonZyczkowski2012p}.
\end{remark}

\bibliography{WLLN}

\end{document}